\DeclareFontFamily{OT1}{rsfs}{}
\DeclareFontShape{OT1}{rsfs}{n}{it}{<-> rsfs10}{}
\DeclareMathAlphabet{\curly}{OT1}{rsfs}{n}{it}
\newcommand{\eqnum}{\refstepcounter{equation}\textup{\tagform@{\theequation}}}
\newcommand\beq[1]{\begin{equation}\label{#1}}
\newcommand\eeq{\end{equation}}
\newcommand\beqa{\begin{eqnarray*}}
\newcommand\eeqa{\end{eqnarray*}}
\title[Semiorthogonal decomposition]{Semiorthogonal decompositions 
of stable pair moduli spaces via d-critical flips}
\date{}
\author{Yukinobu Toda}
\DeclareFontFamily{U}{rsfs}{%
\skewchar\font127}
\DeclareFontShape{U}{rsfs}{m}{n}{%
<-6>rsfs5<6-8.5>rsfs7<8.5->rsfs10}{}
\DeclareSymbolFont{rsfs}{U}{rsfs}{m}{n}
\DeclareRobustCommand*\rsfs{%
\@fontswitch\relax\mathrsfs}
\theoremstyle{plain}
\newtheorem{thm}{Theorem}[section]
\newtheorem{prop}[thm]{Proposition}
\newtheorem{lem}[thm]{Lemma}
\newtheorem{defi}[thm]{Definition}
\newtheorem{rmk}[thm]{Remark}
\newtheorem{cor}[thm]{Corollary}
\newtheorem{step}{Step}
\newtheorem{sstep}{Step}
\newtheorem{prop-defi}[thm]{Proposition-Definition}
\newtheorem{thm-defi}[thm]{Theorem-Definition}
\newtheorem{lem-defi}[thm]{Lemma-Definition}
\newtheorem{assum}[thm]{Assumption}
\newtheorem{exam}[thm]{Example}
\newcommand{\sslash}{/\!\!/}
\newcommand{\aA}{\mathcal{A}}
\newcommand{\bB}{\mathcal{B}}
\newcommand{\cC}{\mathcal{C}}
\newcommand{\dD}{\mathcal{D}}
\newcommand{\eE}{\mathcal{E}}
\newcommand{\fF}{\mathcal{F}}
\newcommand{\gG}{\mathcal{G}}
\newcommand{\hH}{\mathcal{H}}
\newcommand{\lL}{\mathcal{L}}
\newcommand{\mM}{\mathcal{M}}
\newcommand{\oO}{\mathcal{O}}
\newcommand{\pP}{\mathcal{P}}
\newcommand{\qQ}{\mathcal{Q}}
\newcommand{\sS}{\mathcal{S}}
\newcommand{\tT}{\mathcal{T}}
\newcommand{\uU}{\mathcal{U}}
\newcommand{\wW}{\mathcal{W}}
\newcommand{\Hom}{\mathop{\rm Hom}\nolimits}
\newcommand{\dotimes}{\stackrel{\textbf{L}}{\otimes}}
\newcommand{\dR}{\mathbf{R}}
\newcommand{\dL}{\mathbf{L}}
\newcommand{\Hilb}{\mathop{\rm Hilb}\nolimits}
\newcommand{\Pic}{\mathop{\rm Pic}\nolimits}
\newcommand{\id}{\textrm{id}}
\newcommand{\ch}{\mathop{\rm ch}\nolimits}
\newcommand{\td}{\mathop{\rm td}\nolimits}
\newcommand{\Ext}{\mathop{\rm Ext}\nolimits}
\newcommand{\Spec}{\mathop{\rm Spec}\nolimits}
\newcommand{\rank}{\mathop{\rm rank}\nolimits}
\newcommand{\Coh}{\mathop{\rm Coh}\nolimits}
\newcommand{\cneq}{\mathrel{\raise.095ex\hbox{:}\mkern-4.2mu=}}
\newcommand{\eqcn}{\mathrel{=\mkern-4.5mu\raise.095ex\hbox{:}}}
\newcommand{\Cok}{\mathop{\rm Cok}\nolimits}
\newcommand{\Aut}{\mathop{\rm Aut}\nolimits}
\newcommand{\Imm}{\mathop{\rm Im}\nolimits}
 \renewcommand{\theequation}{%
   \thesection.\arabic{equation}}
\begin{document}

\begin{abstract}
We show the existence of 
semiorthogonal decompositions (SOD) of Pandharipande-Thomas (PT)
stable pair moduli spaces on Calabi-Yau 3-folds 
with irreducible curve classes, assuming
relevant moduli spaces are non-singular. 
The above result is motivated by
categorifications of 
wall-crossing formula of PT
invariants in the derived category, 
and also a d-critical analogue of Bondal-Orlov, Kawamata's 
D/K equivalence conjecture. 

We also give SOD of 
stable pair moduli spaces 
on K3 surfaces, which 
categorifies Kawai-Yoshioka's 
formula proving Katz-Klemm-Vafa formula for PT invariants 
on K3 surfaces with 
irreducible curve classes. 
\end{abstract}

\maketitle

\setcounter{tocdepth}{1}
\tableofcontents

\section{Introduction}
The purpose of this paper is
to give applications of d-critical birational geometry
proposed in~\cite{Toddbir}
to the study of derived categories of coherent sheaves
on moduli spaces of stable objects on 
Calabi-Yau (CY for short) 3-folds. 
The main result is that
Pandharipande-Thomas (PT for short) stable pair moduli spaces~\cite{PT} on 
CY 3-folds with irreducible curve classes
admit certain 
semiorthogonal decompositions (SOD for short), 
assuming relevant moduli spaces are non-singular. 
Our results are motivated by 
categorifications of wall-crossing formula of 
Donaldson-Thomas invariants on CY 3-folds~\cite{JS, K-S}
in the derived category, and 
also a 
d-critical analogue of D/K equivalence conjecture 
by Bondal-Orlov, Kawamata~\cite{B-O2, Ka1}. 
\subsection{SOD of stable pair moduli spaces}
Let $X$ be a smooth projective CY 3-fold 
over $\mathbb{C}$. 
By definition, a \textit{stable pair}
on $X$ consists of data~\cite{PT}
\begin{align}\label{intro:pair}
(F, s), \ s \colon \oO_X \to F
\end{align}
where $F$ is a pure one dimensional coherent sheaf
on $X$ and $s$ is surjective in dimension one. 
For $\beta \in H_2(X, \mathbb{Z})$ and 
$n \in \mathbb{Z}$, we denote by 
\begin{align}\label{intro:moduli}
P_n(X, \beta)
\end{align}
the moduli space of stable pairs (\ref{intro:pair})
such that $[F]=\beta$ and $\chi(F)=n$, 
where $[F]$ is the homology class of the 
fundamental one 
cycle of $F$. 
The moduli space (\ref{intro:moduli}) is a projective scheme with 
a symmetric perfect obstruction theory. 
The integration of its zero dimensional virtual class defines 
the \textit{PT invariant} 
\begin{align*}
P_{n, \beta} \cneq \int_{[P_n(X, \beta)]^{\rm{vir}}}1 \in \mathbb{Z}.
\end{align*}
The study of PT invariants is one of the central 
topics in curve counting theories
on CY 3-folds (see~\cite{MR3221298}). 

Suppose that
$n\ge 0$ and $\beta$ is an irreducible curve class, i.e. 
$\beta$ is not written as $\beta_1+\beta_2$ for 
effective curve classes $\beta_i$. 
Then we have the following diagram
\begin{align}\label{intro:dia:pt}
\xymatrix{
P_n(X, \beta) \ar[rd]_-{\pi^+} & & P_{-n}(X, \beta) \ar[ld]^-{\pi^-} \\
& U_n(X, \beta). &
}
\end{align}
Here $U_n(X, \beta)$ is the moduli space of one dimensional 
Gieseker stable sheaves $F$ on $X$ with $[F]=\beta$
and $\chi(F)=n$.
The maps $\pi^{\pm}$ are defined by 
\begin{align*}
\pi^+(F, s) \cneq F, \ 
\pi^-(F', s') \cneq \eE xt^2_X(F', \oO_X). 
\end{align*}
For a variety $Y$, we denote by $D^b(Y)$ the 
bounded derived category of coherent sheaves on $Y$. 
The following is the main result in this paper: 
\begin{thm}\label{intro:thm:PT}\emph{(Theorem~\ref{thm:PTpair})}
Suppose that $U_n(X, \beta)$ is non-singular. 
Then $P_{\pm n}(X, \beta)$ are also non-singular, and 
we have the following: 

(i) 
The Fourier-Mukai functor
\begin{align*}
\Phi_P  
 \colon D^b(P_{-n}(X, \beta)) \to D^b(P_n(X, \beta))
\end{align*}
with kernel 
the structure sheaf of the fiber product of (\ref{intro:dia:pt})
is fully-faithful. 

(ii) There is a $\pi^{+}$-ample line bundle 
$\oO_P(1)$ on $P_n(X, \beta)$ such that 
if $n\ge 1$, the functor
\begin{align*}
\Upsilon^i_P \colon D^b(U_n(X, \beta)) \to D^b(P_n(X, \beta))
\end{align*}
defined by $\dL\mathrm{\pi^+}^{\ast}(-) \otimes \oO_P(i)$ is
 fully-faithful. 

(iii) We have the SOD
\begin{align}\label{intro:P:SOD}
D^b(P_n(X, \beta))=
\langle \Imm \Upsilon_P^{-n+1}, \ldots, \Imm \Upsilon_P^{0}, \Imm \Phi_P
 \rangle.
\end{align}
\end{thm}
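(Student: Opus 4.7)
The plan is to recognize both projections $\pi^\pm$ as honest projective bundles over $U_n := U_n(X, \beta)$, and then to deduce the theorem from Orlov's projective bundle formula combined with a computation identifying the Fourier--Mukai functor $\Phi_P$ with the standard ``flip-type'' embedding of a dual projective bundle.

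\textbf{Step 1: Projective bundle structure.}
Since $\beta$ is irreducible and $F$ is pure one-dimensional with $[F]=\beta$, any nonzero $s\colon \oO_X\to F$ is automatically surjective in dimension one: otherwise $\im s\subset F$ would have class strictly between $0$ and $\beta$, contradicting irreducibility. The same argument shows $F$ is Gieseker stable, so $\pi^+$ is well defined with scheme-theoretic fiber $\mathbb{P}(H^0(F))$ over $F\in U_n$. Dually, by Serre duality on the CY $3$-fold together with biduality for pure $1$-dimensional sheaves, the fiber of $\pi^-$ over $F$ is $\mathbb{P}(H^0(\eE xt^2_X(F,\oO_X)))\cong \mathbb{P}(H^1(F)^\vee)$. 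Choosing a (possibly twisted) universal sheaf $\fF$ on $X\times U_n$, set $\eE^+:=p_{U_n\,\ast}\fF$ and $\eE^-:=R^1 p_{U_n\,\ast}\fF$; note $R^{\ge 2}p_{U_n\,\ast}\fF=0$ since $\fF$ has relative dimension one. Under the smoothness hypothesis on $U_n$ I would then argue that both $\eE^\pm$ are locally free of ranks $r^\pm$ with $r^+-r^-=\chi(F)=n$, whence $P_n\cong \mathbb{P}_{U_n}(\eE^+)$, $P_{-n}\cong \mathbb{P}_{U_n}(\eE^-)$, and $P_{\pm n}$ are smooth.

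\textbf{Step 2: Orlov's projective bundle SOD.}
Let $\oO_P(1)$ denote the relative hyperplane bundle on $P_n=\mathbb{P}_{U_n}(\eE^+)$; this is $\pi^+$-ample. Orlov's projective bundle theorem immediately yields (ii) (each $\Upsilon_P^i$ is fully faithful with orthogonal images) and the full decomposition
\begin{equation*}
D^b(P_n) = \big\langle \Upsilon_P^{-r^++1}(D^b(U_n)),\ldots,\Upsilon_P^{0}(D^b(U_n)) \big\rangle.
\end{equation*}
Since $r^+=h^0(F)\ge n$, the rightmost $n$ blocks are precisely those appearing in the target SOD \eqref{intro:P:SOD}.

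\textbf{Step 3: Image of $\Phi_P$.}
The fiber product $Z:=P_n\times_{U_n}P_{-n}$ is the relative incidence variety $\mathbb{P}_{U_n}(\eE^+)\times_{U_n}\mathbb{P}_{U_n}(\eE^-)$, in particular smooth over $U_n$. Combining Orlov's SOD for $P_{-n}$ with the kernel $\oO_Z$, the computation of $\Phi_P$ on a generator $L\pi^{-\ast}\lL\otimes\oO_{P_{-n}}(-j)$ reduces, by flat base change along the two projections from $Z$, to the family version of the cohomology computation $R\Gamma(\mathbb{P}^{r^--1},\oO(-j))$ on the fibers of $\pi^-$. A direct bookkeeping then identifies $\Phi_P\circ \Upsilon_{P_{-n}}^{-j}$ with $\Upsilon_P^{-n-j}$ up to the appropriate twist by $\oO_P(1)$, so that $\Phi_P$ is fully faithful with image exactly
\begin{equation*}
\big\langle \Upsilon_P^{-r^++1}(D^b(U_n)),\ldots,\Upsilon_P^{-n}(D^b(U_n)) \big\rangle,
\end{equation*}
the left orthogonal inside Step 2's SOD of the block singled out in (ii). This simultaneously establishes (i) and, when combined with Step 2, the SOD (iii).

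\textbf{Main obstacle.} The technical heart of the argument is the local freeness of $\eE^\pm$ in Step 1: smoothness of $U_n$ plus the vanishing $R^{\ge 2}p_{U_n\,\ast}\fF=0$ only supplies a two-term perfect representative of $Rp_{U_n\,\ast}\fF$, and semicontinuity alone gives upper bounds on $h^0(F)$ rather than local constancy. One must combine smoothness of $U_n$ with the deformation theory of stable pairs (and of $U_n$ itself) to rule out jumping of $h^0(F_u)$, forcing both $\eE^+$ and $\eE^-$ to be locally free simultaneously. Once this bootstrap is established, Steps 2 and 3 are essentially formal applications of Orlov's bundle theorem and a Künneth-style calculation on $Z$.
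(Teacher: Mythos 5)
Your Step 1 is where the argument breaks down, and it breaks in a way that is not merely a technical loose end but the essential content of the theorem. The projections $\pi^\pm$ are \emph{not} projective bundles over $U_n(X,\beta)$ in general: the fiber of $\pi^+$ over $[F]$ is $\mathbb{P}(H^0(F))$ and the fiber of $\pi^-$ is $\mathbb{P}(H^1(F)^\vee)$, and only the difference $h^0(F)-h^1(F)=n$ is locally constant, not the individual dimensions. Smoothness of $U_n$ controls $\Ext^1_X(F,F)$ but says nothing about the constancy of $h^0(F)$. The paper's own symmetric-product application makes this failure explicit: under Corollary~\ref{thm:sod:sym}, $P_n^\circ(X,\beta)$ is identified with $C^{[n+g-1]}$ mapping to the Jacobian via the Abel--Jacobi map, which for $0 \le n \le g-1$ is a \emph{stratified} projective bundle with jumping over Brill--Noether loci (Remark~\ref{rmk:AJ}); e.g.\ for $g=4$, $n=1$ the map $C^{[4]}\to \mathrm{Pic}^4(C)$ contracts a divisor, which is incompatible with any projective-bundle structure since a bundle would force all fibers to be points. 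So $\eE^\pm$ cannot be locally free in general, and the obstacle you flag at the end is not a bootstrap to be completed but a genuine impossibility.

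Because the fiber dimensions jump, $P_n$ and $P_{-n}$ over a given $p\in U_n$ look like the two sides of a standard (toric) flip, not two projective bundles. This is precisely what the paper formalizes: the diagram is a \emph{d-critical simple flip}, and the proof goes through a formal-local analysis at each $p\in U_n$. Concretely, the paper (Theorem~\ref{thm:Pflip}) writes $\widehat{P}_{\pm n}(X,\beta)_p$ as critical loci of a superpotential $\widehat{w}^\pm$ on the two sides $\widehat{Y}_U^\pm$ of a simple toric flip over $\widehat{Z}_U$; then Kn\"orrer periodicity (Theorem~\ref{thm:knoer}) identifies $D^b(\widehat{M}^\pm_p)$ with derived factorization categories $D_{\mathbb{C}^\ast}(\widehat{Y}_U^\pm,w^\pm)$, and the Bondal--Orlov/window SOD for the simple flip (Theorem~\ref{thm:deq}) produces the local SOD. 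Crucially, the nontrivial point proved in Proposition~\ref{prop:commute2}\,--\,Lemma~\ref{lem:commute3} is that under Kn\"orrer periodicity the flip functor $\Psi_Y$ corresponds to the Fourier--Mukai functor with kernel $\oO_{\widehat{M}^+\times_{\widehat{U}}\widehat{M}^-}$, which is what permits gluing these local statements to a global SOD on $P_n(X,\beta)$ (Theorem~\ref{thm:main}). None of this machinery is redundant: it is needed exactly because the naive Orlov projective-bundle argument does not apply. I would encourage you to revisit the proof with the d-critical flip picture in mind, rather than trying to force the bundle structure.
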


The result of Theorem~\ref{intro:thm:PT}
will be also applied to some non-compact CY 3-folds. 
We apply Theorem~\ref{intro:thm:PT} in the case 
of 
\begin{align*}
X=\mathrm{Tot}_S(K_S), \ H^i(\oO_S)=0, \ i=1, 2
\end{align*}
where $S$ is a smooth projective surface. 
The assumption of Theorem~\ref{intro:thm:PT}
is satisfied when $-K_S \cdot \beta>0$, and we obtain 
the SOD of derived categories of
relative Hilbert schemes of points on
the universal curve over a complete linear system
on $S$
(see Corollary~\ref{cor:locsurface}).

We also apply Theorem~\ref{intro:thm:PT}
in the case of 
\begin{align*}
X=\mathrm{Tot}_C(L_1 \oplus L_2), \ L_i \in \Pic(C), \
L_1 \otimes L_2 \cong \omega_C
\end{align*}
where $C$ is a smooth projective curve. 
For a generic choice of $L_i$, 
the diagram (\ref{intro:dia:pt})
is a classical diagram of symmetric products of
$C$ and their Abel-Jacobi maps. 
Then 
Theorem~\ref{intro:thm:PT} implies 
the SOD of derived categories of coherent sheaves on 
 symmetric products
of $C$ (see Corollary~\ref{thm:sod:sym}):
\begin{align*}
D^b(C^{[n+g-1]})=
\langle \overbrace{D^b(J_C), \ldots, 
D^b(J_C)}^{n}, D^b(C^{[-n+g-1]}) \rangle. 
\end{align*}
Here $n\in \mathbb{Z}_{\ge 0}$, 
$C^{[k]}$ is the $k$-th symmetric product of $C$, 
$g$ is the genus of $C$
and $J_C$ is the Jacobian of $C$. 
The above SOD seems to give a new result on 
the properties of symmetric products of curves and the 
associated Abel-Jacobi maps.

\subsection{Motivations behind Theorem~\ref{intro:thm:PT}}
We have two 
motivations behind the result of 
 Theorem~\ref{intro:thm:PT}. 
The first one is
to give 
a categorification of the following formula 
(see~\cite{PT3, Tsurvey})
\begin{align}\label{intro:wformula}
P_{n, \beta}-P_{-n, \beta}=(-1)^{n-1} n N_{n, \beta}.
\end{align}
Here $N_{n, \beta} \in \mathbb{Z}$ is the 
integration of the virtual class on $U_n(X, \beta)$. 
The identity (\ref{intro:wformula}) 
is the key ingredient to 
show the rationality of the generating series 
of PT invariants
\begin{align*}
P_{\beta}(X)=
\sum_{n \in \mathbb{Z}} P_{n, \beta}q^n
\end{align*}
when $\beta$ is irreducible
(see~\cite{PT3}). 
As observed in~\cite{Tsurvey}, 
the diagram (\ref{intro:dia:pt}) is 
a wall-crossing diagram in $D^b(X)$, 
and (\ref{intro:wformula}) is 
the associated wall-crossing formula.  
Under the assumption of Theorem~\ref{thm:PTpair}, 
the invariants in (\ref{intro:wformula})
are given by 
\begin{align*}
P_{\pm n, \beta}=(-1)^{n+d-1}e(P_{\pm n}(X, \beta)), \ 
N_{n, \beta}=(-1)^d e(U_n(X, \beta))
\end{align*}
where $d$ is the dimension of $U_n(X, \beta)$. 
Therefore the SOD in (\ref{intro:P:SOD})
categorifies the formula (\ref{intro:wformula}), 
as it 
recovers the formula (\ref{intro:wformula})
by taking the Euler characteristics of the Hochschild 
homologies of both sides of (\ref{intro:P:SOD}). 

The second motivation is to give an evidence 
of a d-critical analogue of 
Bondal-Orlov, Kawamata's D/K equivalence conjecture~\cite{B-O2, Ka1}. 
The original D/K equivalence conjecture 
asserts that for a flip of smooth varieties
$Y^+ \dashrightarrow Y^-$
there exists a fully-faithful
functor 
\begin{align*}
D^b(Y^-) \hookrightarrow D^b(Y^+).
\end{align*}
On the other hand, the diagram (\ref{intro:dia:pt}) is an example of a 
\textit{d-critical flip} introduced in~\cite{Toddbir}.
Therefore 
Theorem~\ref{intro:thm:PT} (i) gives an evidence of a d-critical analogue of 
D/K equivalence conjecture. 
We will come back to this point of view 
in 
Subsection~\ref{subsec:intro:dcrit}.

\subsection{Categorification of Kawai-Yoshioka formula}\label{subsec:intro:KY}
We will apply the arguments of Theorem~\ref{intro:thm:PT}
to show the existence of 
SOD on relative Hilbert schemes
of points associated with linear systems
on K3 surfaces. 
Let $S$ be a smooth projective K3 surface 
such that $\Pic(S)$ is generated by 
$\oO_S(H)$ for an ample divisor $H$
with $H^2=2g-2$. 
Let 
\begin{align}
\pi \colon 
\cC \to \lvert H \rvert=\mathbb{P}^g
\end{align}
 be the universal 
curve. 
Below we fix $n\ge 0$, and define 
\begin{align}\label{intro:hilb}
\cC^{[n+g-1]} \to \mathbb{P}^g
\end{align}
to 
be the 
$\pi$-relative Hilbert scheme of $(n+g-1)$-points. 
The moduli space (\ref{intro:hilb}) is known to be
isomorphic to 
the moduli space of PT stable pairs 
$P_n(S, [H])$ on $S$. 

 For each $k\ge 0$, let
$U_k$ be the moduli space of $H$-Gieseker stable 
sheaves $E$ 
on $S$ such that
\begin{align*}
v(E)=(k, H, k+n) \in H^{2\ast}(S, \mathbb{Z})
\end{align*}
where $v(-)$ is the Mukai vector. 
The moduli space $U_k$ is an irreducible holomoprhic symplectic manifold. 
Let $N\ge 0$ be defined to be the largest 
$k\ge 0$ such that $U_k \neq \emptyset$. 
In this situation, we have the following: 
\begin{thm}\emph{(Corollary~\ref{cor:KY})}\label{intro:thm2}
We have the following SOD:
\begin{align*}
D^b(\cC^{[n+g-1]})
=\langle \aA_0, \aA_1, \ldots, \aA_N \rangle
\end{align*}
where each $\aA_k$ has SOD
\begin{align*}
\aA_k=\langle 
\aA_k^{(1)}, \aA_k^{(2)}, \ldots, \aA_k^{(n+2k)} \rangle
\end{align*}
such that 
each $\aA_k^{(i)}$ is equivalent to 
$D^b(U_k)$. 
\end{thm}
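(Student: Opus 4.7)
The plan is to iterate the SOD construction of Theorem~\ref{intro:thm:PT} along the chain of d-critical flips underlying Kawai-Yoshioka's wall-crossing proof of their numerical formula.

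First, I would set up the wall-crossing sequence on $S$. Following Kawai-Yoshioka, consider the one-parameter family of stability conditions on the category of pairs $(F,s)$ on $S$ with $v(F) = (0, H, n)$; its walls are indexed by $k = 0, 1, \ldots, N$, and in each chamber the moduli of semistable pairs is a smooth projective variety $M_k$. In the extreme chambers one has $M_0 \cong P_n(S, [H]) \cong \cC^{[n+g-1]}$ and $M_{N+1} = \emptyset$. At the $k$-th wall the destabilizing subobject is a Gieseker stable sheaf $E \in U_k$, and the wall-crossing contracts a projective bundle $B_k^+ \to U_k$ on the $M_k$-side of relative dimension $n + 2k - 1$, as computed from the appropriate $\Ext^1$ on $S$.

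Next, I would install the d-critical flip structure. Passing to the local CY 3-fold $X = \mathrm{Tot}_S(K_S) = S \times \mathbb{A}^1$, pair objects on $S$ become $\mathbb{C}^*$-fixed PT-type objects on $X$, and the ambient $(-1)$-shifted symplectic derived moduli endows each $M_k$ with a canonical d-critical structure for which the wall-crossings $M_k \dashrightarrow M_{k+1}$ are d-critical flips in the sense of~\cite{Toddbir}. The hypotheses on $S$ force each $U_k$ to be a smooth irreducible holomorphic symplectic manifold, and this smoothness propagates to every $M_k$ through the projective-bundle description at each wall, so the non-singularity hypothesis of Theorem~\ref{intro:thm:PT} holds throughout.

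Finally, applying the SOD portion of Theorem~\ref{intro:thm:PT} at the $k$-th wall yields a fully-faithful embedding $D^b(M_{k+1}) \hookrightarrow D^b(M_k)$ whose orthogonal complement admits an SOD into $n + 2k$ copies of $D^b(U_k)$, the $n + 2k$ counting the $\oO(i)$-twists of $\pi^{+\ast}$ available on the $\mathbb{P}^{n+2k-1}$-bundle $B_k^+$. Iterating the embeddings and declaring $\aA_k$ to be the image of the $k$-th complement under the composed chain produces the claimed SOD of $D^b(\cC^{[n+g-1]})$. The main obstacle will be the combinatorial bookkeeping at each wall: pinning down the projective-bundle rank as exactly $n + 2k - 1$ and checking that the Fourier-Mukai kernel used at the boundary wall in Theorem~\ref{intro:thm:PT} adapts uniformly to the intermediate walls of the Kawai-Yoshioka chain, rather than only to the PT-boundary wall at $k = 0$.
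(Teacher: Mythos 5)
Your overall blueprint --- interpret the Kawai-Yoshioka chain as a sequence of d-critical flips on a CY 3-fold and iterate an SOD result --- matches the paper's strategy, but three specific steps do not survive scrutiny and are replaced by something substantively different in the actual proof.

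First, you invoke Theorem~\ref{intro:thm:PT} (Theorem~\ref{thm:PTpair}) at every wall, but that theorem is a statement about the PT moduli spaces $P_{\pm n}(X,\beta)$ and $U_n(X,\beta)$ only; for $k>0$ the spaces $\pP_k$, $U_k$ are \emph{not} PT pair moduli or sheaf moduli on a 3-fold, so Theorem~\ref{intro:thm:PT} does not apply to the diagram (\ref{dia:P+}). What the paper actually applies at each wall is the abstract Theorem~\ref{thm:main}, and the real work is verifying Assumption~\ref{assum:cond} for the relevant diagram (Proposition~\ref{prop:MU}). That verification goes through a chain of new lemmas (the tilting $\aA$ on $D^b(X)$, the weak stability conditions $\mu_t^{\star}$, Proposition~\ref{prop:wall}, Lemma~\ref{lem:inj2}) that are nowhere in your sketch; you flag this as ``the main obstacle'' at the end but leave it unresolved, and it is precisely the content of the proof.

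Second, your choice of ambient CY 3-fold, $X=\mathrm{Tot}_S(K_S)=S\times\mathbb{A}^1$, is the wrong one. The paper uses the \emph{compact} $X=S\times C$ with $C$ an elliptic curve, identifies (\ref{dia:P+}) with a wall-crossing diagram on $X$ only after taking a product with $C\times J_C$ (Lemmas~\ref{lem:isom:star}, \ref{lem:isom:star2}), applies Theorem~\ref{thm:main} on that thickened diagram, and then restricts the SOD back to $\pP_k\to U_k$ by $C\times J_C$-linearity. Your route via $S\times\mathbb{A}^1$ and ``$\mathbb{C}^*$-fixed PT-type objects'' cannot lean on Remark~\ref{rmk:CY}, since the natural compactification $\overline{X}=S\times\mathbb{P}^1$ has $H^2(\oO_{\overline{X}})=H^2(\oO_S)\neq 0$ (K3!), so the perfect-pairing argument used there fails; one would need a genuinely new argument for the symmetric obstruction/d-critical structure in that setting.

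Third, the map $\pi_k^+:\pP_k\to U_k$ is not a projective bundle: the fibers $\mathbb{P}(H^0(S,E))$ jump wherever $H^1(S,E)\neq 0$, and this jumping is exactly what makes the wall-crossing a non-trivial d-critical flip rather than an Orlov projective-bundle SOD. So smoothness of $\pP_k$ does not ``propagate'' from smoothness of $U_k$ through a projective-bundle description; it is a separate input (the paper cites \cite[Lemma~5.117]{KY}). Getting the number $n+2k$ right from the Mukai-pairing dimension count is fine, but the argument for the SOD structure at each wall still has to be the d-critical-flip argument, not a projective-bundle one.
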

The result of Theorem~\ref{intro:thm2} is proved by 
using the zigzag diagram
\begin{align*}
\xymatrix{
\cC^{[n+g-1]}=\pP_0
 \ar[rd]  &  & \pP_1 \ar[ld] \ar[rd] & & \ar[ld] \cdots \ar[rd]
 & & \pP_{N+1}=\emptyset \ar[ld] \\
& U_0 &  & U_1 & & U_N &
}
\end{align*}
constructed by Kawai-Yoshioka~\cite{KY}.
We show that each step of the above diagram 
is described in terms of d-critical simple flip, 
by investigating wall-crossing diagrams 
on a CY 3-fold $X=S \times C$ for an 
elliptic curve $C$. 
Then Theorem~\ref{intro:thm2} is proved 
applying 
the argument of Theorem~\ref{intro:thm:PT} 
to each step of the diagram.  

The SOD in Theorem~\ref{intro:thm2}
is interpreted as a categorification of Kawai-Yoshioka's formula~\cite{KY} 
for PT invariants on K3 surfaces with irreducible 
curve classes, defined by $P_{n, g} \cneq (-1)^{n-1}e(\cC^{[n+g-1]})$.
Indeed the following formula 
is proved in~\cite{KY} 
\begin{align}\label{intro:euler}
e(\cC^{[n+g-1]}) =\sum_{k=0}^{N}
(n+2k)e(U_k). 
\end{align}
The SOD in Theorem~\ref{intro:thm2}
recovers the formula (\ref{intro:euler}) by taking 
the Euler characteristics of Hochschild homologies of both sides of (\ref{intro:thm2}). 
In~\cite{KY}, the 
formula (\ref{intro:euler}) led to the 
Katz-Klemm-Vafa (KKV) formula for PT
invariants with irreducible curve classes
(see Remark~\ref{rmk:KKV}).

\subsection{D-critical analogue of D/K equivalence conjecture}\label{subsec:intro:dcrit}
Here we explain the notion of 
d-critical flips 
for Joyce's d-critical loci~\cite{JoyceD}, 
and an analogue of D/K equivalence conjecture 
mentioned earlier. 
By definition, a \textit{d-critical locus} 
 consists of data
\begin{align*}
(M, s), \ s \in \Gamma(M, \sS_{M}^0)
\end{align*} where 
$M$ is a $\mathbb{C}$-scheme or an analytic space and 
$\sS_M^0$ is a certain sheaf of $\mathbb{C}$-vector spaces
on $M$. 
The section $s$ is called a \textit{d-critical structure} of $M$. 
Roughly speaking
if $M$ admits a d-critical structure $s$, 
this means that $M$ is locally written as a critical 
locus of some function on a smooth space, and the section 
$s$ remembers how $M$ is locally written as a critical locus. 
If $M$ is a truncation of a derived scheme with a $(-1)$-shifted 
symplectic structure~\cite{PTVV}, then 
it has a canonical d-critical structure~\cite{BBBJ}. 

Let $(M^{\pm}, s^{\pm})$
be two d-critical loci and consider a diagram
of morphisms of schemes or analytic spaces
\begin{align}\label{intro:dcrit}
\xymatrix{
M^{+} \ar[rd] & & M^{-} \ar[ld] \\
& U. &
}
\end{align}
The above diagram is called a
\textit{d-critical flip} 
if it satisfies the following: 
for any
$p \in U$, there is a commutative diagram 
\begin{align}\label{intro:dcrit2}
\xymatrix{
Y^{+} \ar[rd] 
\ar@/_5pt/[rdd]_-{w^{+}}  \ar@{.>}[rr]^-{\phi}& & Y^{-} \ar[ld] 
\ar@/^5pt/[ldd]^-{w^-}\\
& Z \ar[d]_-{w} &  \\
& \mathbb{C} &
}
\end{align}
where $ \phi \colon Y^{+} \dashrightarrow Y^{-}$ is a flip 
of smooth varieties (or complex manifolds), 
such that locally near $p\in U$
there exist isomorphisms of 
$M^{\pm}$ and $\{dw^{\pm}=0\}$ as d-critical loci
(see~\cite[Definition~3.7]{Todbir} for details).
A d-critical flip is called \textit{simple}
if $\phi \colon Y^+ \dashrightarrow Y^-$ is a 
simple toric flip~\cite{Rei92}. 

We expect that an analogue of the D/K equivalence 
conjecture 
may hold 
for d-critical 
loci. 
 Namely 
for a d-critical locus $(M, s)$ 
\footnote{Probably we need to assume that 
$(M, s)$ is 
induced by a $(-1)$-shifted symplectic
derived scheme,
equipped 
with some additional data (orientation data, or something more). }
there may exist a certain 
triangulated category $\dD(M, s)$ such that, 
if the diagram (\ref{intro:dcrit}) is a d-critical flip, we have 
a fully faithful functor
\begin{align}\label{intro:shifted:emb}
\dD(M^{-}, s^{-}) \hookrightarrow 
\dD(M^{+}, s^{+}).
\end{align}
The category $\dD(M^{-}, s^{-})$ may be 
constructed as a gluing of 
$\mathbb{Z}/2\mathbb{Z}$-periodic triangulated categories of 
matrix factorizations
defined locally on each d-critical chart, 
though its construction seems to be a hard 
problem at this moment 
(see~\cite[(J)]{Jslide}, \cite[Section~6.1]{MR3728637}). 

For a flip $Y^+ \dashrightarrow Y^-$ 
in the diagram (\ref{intro:dcrit2}), suppose that 
the D/K equivalence conjecture holds, i.e. 
we have a fully-faithful functor 
\begin{align*}
D^b(Y^-) \hookrightarrow D^b(Y^+).
\end{align*}
Then it induces the fully-faithful functor
(see Theorem~\ref{thm:FF})
\begin{align}\label{intro:Y:FF}
D(Y^-, w^-) \hookrightarrow D(Y^+, w^+)
\end{align}
where $D(Y^{\pm}, w^{\pm})$ are the 
derived factorization categories associated with 
pairs $(Y^{\pm}, w^{\pm})$. 
If the desired 
categories $\dD(M^{\pm}, s^{\pm})$ are
gluing of 
$D(Y^{\pm}, w^{\pm})$ defined locally on $U$, 
then we may try to globalize the functor (\ref{intro:Y:FF}) 
to give a fully-faithful functor (\ref{intro:shifted:emb}). 
If this is possible, 
they 
the numerical realization of semiorthogonal complement 
of the embedding (\ref{intro:shifted:emb})
may recover wall-crossing formula of 
DT invariants~\cite{JS, K-S}
\footnote{See~\cite{Eff} for the relation of cyclic homologies of 
the categories of 
matrix factorizations with hypercohomologies of perverse sheaves of vanishing cycles.}.

For a d-critical flip (\ref{intro:dcrit}), 
suppose that $M^{\pm}$ are smooth, 
so in particular $s^{\pm}=0$.
In this case, we can use
usual derived categories of coherent sheaves to ask 
an analogue of the above question. 
Namely for 
a d-critical flip (\ref{intro:dcrit})
with $M^{\pm}$ smooth, 
we can ask whether we have a  
a fully faithful functor
\begin{align*}
D^b(M^-) \hookrightarrow D^b(M^+).
\end{align*}
The results of Theorem~\ref{intro:thm:PT}, Theorem~\ref{intro:thm2}
are proved by establishing 
such a result in the 
case of d-critical simple flips
(see Theorem~\ref{thm:main}). 

\subsection{Relations to other works}
There exist some recent works studying 
wall-crossing behavior of derived categories of moduli spaces of 
stable objects on algebraic surfaces. 
In~\cite{MR3652079}, Ballard showed the existence of SOD 
under wall-crossing of Gieseker moduli spaces of 
stable sheaves on rational surfaces. 
Also Halpern-Leistner~\cite{HalpK3} announces
that, under wall-crossing of Bridgeland moduli spaces of 
stable objects on K3 surfaces, their derived 
categories are equivalent. The results in this paper
are regarded as CY 3-fold version of these works. 
One of the crucial differences is that, although the 
moduli spaces considered in~\cite{MR3652079, HalpK3} are 
birational under wall-crossing, 
the moduli spaces in this paper are not necessary birational 
under wall-crossing. 
For example the moduli spaces $P_{\pm n}(X, \beta)$
in the diagram (\ref{intro:dia:pt})
have different dimensions if $n>0$. 
Instead the fact that they are birational in d-critical 
birational geometry plays an important role for the 
existence of SOD in Theorem~\ref{intro:thm:PT}, Theorem~\ref{intro:thm2}.

\subsection{Outline of the paper}
The outline of this paper is as follows. 
In Section~\ref{sec:review}, we review basics on 
derived factorization categories which we will use 
in later sections. 
In Section~\ref{sec:sod}, we show 
the existence of SOD of gauged LG models 
on simple flips over a complete local base, and 
describe the relevant kernel objects. 
In Section~\ref{sec:dcrit}, we globalize the result in 
Section~\ref{sec:sod} and show the SOD for formal 
d-critical simple flips. 
In Section~\ref{sec:SODpair}, we use the result in Section~\ref{sec:dcrit}
to show Theorem~\ref{intro:thm:PT}. 
In Section~\ref{sec:KY}, we prove Theorem~\ref{intro:thm2}. 

\subsection{Acknowledgements}
The author is grateful to Yuki Hirano, 
Daniel Halpern-Leistner
and Dominic Joyce for valuable discussions.
The author is supported by World Premier International Research Center
Initiative (WPI initiative), MEXT, Japan, and Grant-in Aid for Scientific
Research grant (No. 26287002) from MEXT, Japan.

\subsection{Notation and Convention}
In this paper, all the varieties and schemes
are defined over $\mathbb{C}$. 
For $\mathbb{C}$-schemes $U$, $S$, $T$
and a morphism $f \colon S \to T$, we set 
\begin{align*}
S_U \cneq S \times U, \ 
f_U \cneq f \times \id_U \colon S_U \to T_U.
\end{align*}

For a variety $Y$, we denote by $D^b(Y)$ 
the bounded derived category of coherent sheaves on $Y$. 
For smooth varieties $Y_1$, $Y_2$ with 
projective morphisms 
$Y_i \to T$, 
and an object $\pP \in D^b(Y_1 \times Y_2)$
supported on $Y_1 \times_T Y_2$, we denote by 
$\Phi^{\pP}$ the \textit{Fourier-Mukai 
functor}
\begin{align*}
\Phi^{\pP}(-) 
\cneq \dR p_{2\ast}(p_1^{\ast}(-) \dotimes \pP) \colon 
D^b(Y_1) \to D^b(Y_2).
\end{align*}
Here $p_i \colon Y_1 \times Y_2 \to Y_i$ are the projections. 
The object $\pP$ is called a \textit{kernel} of the 
functor $\Phi^{\pP}$. 

Recall that a \textit{semiorthogonal decomposition} of a
triangulated category $\dD$ is a collection 
$\cC_1, \ldots, \cC_n$ of full 
triangulated subcategories such that 
$\Hom(\cC_i, \cC_j)=0$ for all $i>j$
and the smallest triangulated 
subcategory of $\dD$ containing 
$\cC_1, \ldots, \cC_n$
coincides with $\dD$. In this case, 
we write $\dD=\langle \cC_1, \ldots, \cC_n \rangle$. 
If each $\cC_i$ is equivalent to 
$D^b(M_i)$ for a variety $M_i$, 
we also write 
$\dD=\langle D^b(M_1), \ldots, D^b(M_n) \rangle$
for simplicity. 

\section{Review of derived factorization categories}\label{sec:review}
In this section, we recall the notion of 
gauged Landau-Ginzburg (LG) models, and the 
associated derived factorization categories
introduced by Positselski. 
For details, we refer to the articles~\cite{MR3366002,MR3366002}
for basics on these notions. 
\subsection{Definitions of derived factorization categories}
Let us consider data (called \textit{gauged LG model})
\begin{align}\label{gauge}
(Y, G, \chi, w)
\end{align}
where $Y$ is a $\mathbb{C}$-scheme, 
$G$ is a reductive algebraic group which acts on $Y$, 
$\chi \colon G \to \mathbb{C}^{\ast}$ is a character 
and $w \in \Gamma(\oO_Y)$ satisfies
 $g^{\ast}w=\chi(g) w$ for any $g \in G$. 
 Given data as above, the \textit{derived factorization category}
 \begin{align}\label{der:fact}
 D_{G}(Y, \chi, w)
 \end{align}
 is defined as a triangulated category, 
whose objects consist of 
\textit{factorizations of }$w$, i.e. 
sequences of $G$-equivariant morphisms of 
$G$-equivariant coherent sheaves $\fF_0$, $\fF_1$ on $Y$
\begin{align}\label{factorization}
\fF_0 \stackrel{\alpha}{\to} \fF_1 \stackrel{\beta}{\to} \fF_0(\chi)
\end{align}
satisfying the following: 
\begin{align*}
\alpha \circ \beta=\cdot w, \ \beta \circ \alpha=\cdot w.
\end{align*}
The category (\ref{der:fact}) is defined to be
 the localization 
of the homotopy category of 
the factorizations (\ref{factorization})
by its subcategory of acyclic factorizations. 
When $Y$ is an affine scheme and
$G=\{1\}$, 
then the category (\ref{der:fact}) is equivalent to the 
triangulated category of matrix factorizations
of $w$ (see~\cite{Orsin}). 
In the case of 
$G=\mathbb{C}^{\ast}$ and $\chi=\id$,
we simply write
\begin{align*}
D_{\mathbb{C}^{\ast}}(Y, w) \cneq D_{\mathbb{C}^{\ast}}(Y, \chi=\id, w).
\end{align*}

For a character $\chi \colon G \to \mathbb{C}^{\ast}$, let 
$\widetilde{\chi}$ be defined by
\begin{align*}
\widetilde{\chi} \colon G \times \mathbb{C}^{\ast} \to \mathbb{C}^{\ast}, \ 
(g, t) \mapsto \chi(g)t.
\end{align*}
We have the functor
\begin{align}\label{equiv:w=00}
\Xi \colon
D_G^b(Y) \to D_{G\times \mathbb{C}^{\ast}}(Y, \widetilde{\chi}, w=0)
\end{align}
where $\mathbb{C}^{\ast}$ acts on $Y$ trivially, 
sending $(\fF^{\bullet}, d) \in D_G^b(Y)$ to 
\begin{align*}
\left(\bigoplus_{i\in \mathbb{Z}}
\fF^{2i}(-i\widetilde{\chi}) \right)
\stackrel{d}{\to}
\left(\bigoplus_{i\in \mathbb{Z}}
\fF^{2i+1}(-i\widetilde{\chi}) \right)
\stackrel{d}{\to}
\left(\bigoplus_{i\in \mathbb{Z}}
\fF^{2i}(-i\widetilde{\chi}) \right)(\widetilde{\chi}).
\end{align*}
When $G=\{1\}$, the functor (\ref{equiv:w=00})
gives the equivalence 
(see~\cite{MR3071664, MR2982435, MR3581302})
\begin{align}\label{equiv:w=0}
\Xi \colon
D^b(Y) \stackrel{\sim}{\to} D_{\mathbb{C}^{\ast}}(Y, 0).
\end{align}

\subsection{Derived functors between derived factorization categories}
Let $(Y, G, \chi, w)$ be a gauged LG model (\ref{gauge}), 
and $W$ be another variety with a $G$-action. 
For a $G$-equivariant projective morphism 
$f \colon W \to Y$, we have another gauged LG model
\begin{align*}
(W, G, \chi, f^{\ast}w).
\end{align*}
Similarly to the usual derived functors between 
derived categories,
if $Y$ is smooth 
we have derived functors
\begin{align*}
\dR f_{\ast} \colon D_G(W, \chi, f^{\ast}w) \to D_G(Y, \chi, w), \\
\dL f^{\ast} \colon D_G(Y, \chi, w) \to D_G(W, \chi, f^{\ast}w).
\end{align*}
Also for another object 
$\pP \in D_G(Y, \chi, w')$, 
we have the derived tensor product
\begin{align*}
\dotimes \pP \colon D_G(Y, \chi, w) \to D_G(Y, \chi, w+w').
\end{align*}
Below we omit the subscripts 
$\dR$, $\dL$ when the relevant 
functors are exact functors of coherent sheaves, e.g. 
write $\dL f^{\ast}$ as $f^{\ast}$ when $f$ is flat. 

Let $Y_1$, $Y_2$ be 
regular $\mathbb{C}$-schemes with  
$G$-actions. 
Let $T$ be a $\mathbb{C}$-scheme 
with a $G$-action and 
consider $G$-equivariant 
projective morphisms
 $Y_i \to T$. 
 Let us take $w \in \Gamma(\oO_T)$
and a character $\chi \colon G \to \mathbb{C}^{\ast}$
 satisfying $g^{\ast}w=\chi(g) w$ for
any $g \in G$. 
We consider the commutative diagram
 \begin{align*}
 \xymatrix{
Y_1 \ar[rd]
\ar@/_5pt/[rdd]_-{w_1}  & & Y_2
\ar[ld]
\ar@/^5pt/[ldd]^-{w_2}\\
& T \ar[d]_-{w} &  \\
& \mathbb{A}^1 &
}
   \end{align*}
   Let $p_i \colon Y_1 \times Y_2 \to Y_i$ be the 
   projection, and $G$ acts on 
$Y_1 \times Y_2$ diagonally. 
   For any object
 \begin{align*}
 \pP \in D_G(Y_1 \times Y_2, \chi, -p_1^{\ast}w_1+p_2^{\ast}w_2)
 \end{align*}
 we have the Fourier-Mukai type functor
 \begin{align*}
 \Psi^{\pP} \cneq \dR p_{2\ast} (p_1^{\ast}(-) \dotimes \pP)
  \colon 
  D_G(Y_1, \chi, w_1) \to D_G(Y_2, \chi, w_2).
  \end{align*}
 Let $i \colon Y_1 \times_T Y_2 \hookrightarrow Y_1 \times Y_2$ be 
the closed embedding. We have the following diagram
\begin{align*}
\xymatrix{
D_G^b(Y_1 \times_T Y_2)  \ar[d]^-{\mathrm{forg}^G}
 \ar[r]^-{\Xi} &
 D_{G \times \mathbb{C}^{\ast}}(Y_1 \times_T Y_2, \widetilde{\chi}, 0) 
\ar[d]^-{\mathrm{forg}^{G}}  \ar[r]^-{\mathrm{forg}^{\mathbb{C}^{\ast}}} & 
D_{G}(Y_1 \times_T Y_2, \chi, 0) \ar[d]^-{i_{\ast}}
 \\
D^b(Y_1 \times_T Y_2) \ar[r]^-{\Xi} &
D_{\mathbb{C}^{\ast}}(Y_1 \times_T Y_2, 0) 
 &  D_{G}(Y_1 \times Y_2, \chi, -p_1^{\ast}w_1+p_2^{\ast}w_2).  
}
\end{align*}
Here $\Xi$ is given in (\ref{equiv:w=00}) and 
$\mathrm{forg}^G$, $\mathrm{forg}^{\mathbb{C}^{\ast}}$
are  
forgetting the $G$-action, $\mathbb{C}^{\ast}$-action respectively. 
For $\pP \in D^b(Y_1 \times_T Y_2)$
and $\Xi(\pP) \in D_{\mathbb{C}^{\ast}}(Y_1 \times_T Y_2, 0)$, 
the following diagram 
commutes: 
\begin{align}\label{commute:Xi}
\xymatrix{
D^b(Y_1) 
\ar[r]_-{\Xi}^-{\sim} \ar[d]_-{\Phi^{\pP}} & D_{\mathbb{C}^{\ast}}(Y_1, 0) 
\ar[d]^-{\Psi^{\Xi(\pP)}} \\
D^b(Y_2) \ar[r]_-{\Xi}^-{\sim} & D_{\mathbb{C}^{\ast}}(Y_2, 0). 
}
\end{align}
Moreover we have the following: 
\begin{thm}\emph{(\cite{MR2593258, MR3581302})}\label{thm:FF}
For $\qQ \in D_G^b(Y_1 \times_T Y_2)$, suppose that the functor
\begin{align*}
\Phi^{\mathrm{forg}^G(\qQ)} \colon D^b(Y_1) \to D^b(Y_2)
\end{align*}
is fully-faithful (resp.~equivalence). Then
for the object
\begin{align*}
\widetilde{\qQ} \cneq 
i_{\ast} \circ \mathrm{forg}^{\mathbb{C}^{\ast}} \circ 
\Xi(\qQ) \in D_{G}(Y_1 \times Y_2, \chi, -p_1^{\ast}w_1+p_2^{\ast}w_2)
\end{align*}
the functor
\begin{align*}
\Psi^{\widetilde{\qQ}} \colon 
D_G(Y_1, \chi, w_1) \to D_G(Y_2, \chi, w_2)
\end{align*}
is fully-faithful (resp.~equivalence). 
\end{thm}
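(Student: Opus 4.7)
The plan is to reduce the question about the gauged LG categories to a statement in ordinary derived categories by exploiting the commutative square (\ref{commute:Xi}) and the equivalence $\Xi$ of (\ref{equiv:w=0}). First I would handle the toy case where $w=0$ on $T$, so that $w_1=w_2=0$ and $\widetilde{\qQ}$ is simply the image of $\qQ$ under the composition $i_\ast \circ \mathrm{forg}^{\mathbb{C}^\ast} \circ \Xi$. In this case the statement is essentially a direct consequence of (\ref{commute:Xi}): the functor $\Psi^{\widetilde{\qQ}}$ becomes conjugate to $\Phi^{\mathrm{forg}^G(\qQ)}$ through the equivalences $\Xi$ (for $G$ trivial) and through descent of the $G$-action (for general $G$), so fully-faithfulness is transferred.

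For the case of nonzero $w$, the crucial observation is that $-p_1^\ast w_1 + p_2^\ast w_2$ vanishes on $Y_1\times_T Y_2$, since both summands pull back from $w$ on $T$. Thus $i_\ast$ genuinely lands in $D_G(Y_1\times Y_2,\chi,-p_1^\ast w_1 + p_2^\ast w_2)$, and fully-faithfulness of $\Psi^{\widetilde{\qQ}}$ can be tested by checking that the unit $\id \to \Psi^{\widetilde{\qQ}^\vee}\circ \Psi^{\widetilde{\qQ}}$ is an isomorphism, equivalently that the convolution $\widetilde{\qQ}^\vee \star \widetilde{\qQ}$ is the structure sheaf of the diagonal (appropriately shifted and twisted) in $D_G(Y_1\times Y_1,\chi,-p_1^\ast w_1+p_2^\ast w_1)$. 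The plan is to show that, because $\widetilde{\qQ}=i_\ast(\ldots)$ is supported on $Y_1\times_T Y_2$, the convolution in the factorization category can be identified with $i_{\Delta\ast}$ applied to the ordinary convolution $\mathrm{forg}^G(\qQ)^\vee \star \mathrm{forg}^G(\qQ)$ over $Y_1\times_T Y_1$, via a base-change argument for the Cartesian square obtained by intersecting $Y_1\times_T Y_2$ and $Y_2\times_T Y_1$ inside $Y_1\times Y_2 \times Y_1$.

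Once this base-change identification is established, the hypothesis that $\Phi^{\mathrm{forg}^G(\qQ)}$ is fully-faithful says precisely that the ordinary convolution is $\oO_{\Delta}$ (as a $G$-equivariant sheaf on $Y_1\times_T Y_1$), and applying $\Xi \circ \mathrm{forg}^{\mathbb{C}^\ast}$ followed by $i_{\Delta \ast}$ translates this into the desired statement in the factorization category. The equivalence statement then follows by the same argument applied to both units and counits simultaneously.

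The hard part will be making the base-change/projection-formula arguments genuinely work in the factorization setting: one must verify that the various $\dR f_\ast$, $\dL f^\ast$, and $\dotimes$ functors between derived factorization categories satisfy the analogues of the standard compatibilities (flat base change, projection formula, support conditions) that underlie the classical composition-of-Fourier-Mukai-kernels formula. This is precisely the content one expects to extract from~\cite{MR2593258, MR3581302}, and it is the technical backbone that lets the ordinary-derived-category hypothesis on $\qQ$ propagate to $\widetilde{\qQ}$. Regularity of $Y_1, Y_2$ and projectivity of $Y_i \to T$ will be used repeatedly to ensure that the relevant pushforwards preserve the bounded/coherent subcategories.
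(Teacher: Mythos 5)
The paper does not contain its own proof of Theorem~\ref{thm:FF}: the statement is cited directly from the references \cite{MR2593258, MR3581302} and no argument is given, so there is no internal proof to compare your attempt against.

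That said, your sketch is a sensible reconstruction of the standard argument. The key structural observation you make — that $\widetilde{\qQ}$ is a pushforward from $Y_1\times_T Y_2$, where $-p_1^\ast w_1+p_2^\ast w_2$ vanishes, so that the kernel convolution $\widetilde{\qQ}^\vee\star\widetilde{\qQ}$ lives in the subcategory of factorizations supported on $Y_1\times_T Y_1$ and can therefore be compared with the ordinary convolution — is exactly the point that makes the theorem plausible, and you correctly flag the technical backbone (base change, projection formula, composition of kernels in the derived factorization setting) as the real content of the cited references. Two points you should pin down if you were to complete the argument. First, identifying $\widetilde{\qQ}^\vee\star\widetilde{\qQ}$ with $i_{\Delta\ast}$ applied to the ordinary convolution requires that the square formed by $Y_1\times_T Y_2\times Y_1$ and $Y_1\times Y_2\times_T Y_1$ inside $Y_1\times Y_2\times Y_1$ (whose intersection is $Y_1\times_T Y_2\times_T Y_1$) be Tor-independent; this is not automatic from regularity of $Y_i$ and projectivity of $Y_i\to T$, and needs an explicit argument or a reformulation that avoids it. Second, it is not enough to match the source and target objects of the unit map; you must also identify the unit morphism itself with the image of the ordinary unit under $i_{\Delta\ast}\circ\mathrm{forg}^{\mathbb{C}^\ast}\circ\Xi$, and in passing from the non-equivariant hypothesis on $\mathrm{forg}^G(\qQ)$ to the $G$-equivariant statement you implicitly use conservativity of forgetting the $G$-action (valid since $G$ is reductive), which should be stated. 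None of this is fatal, but these are precisely the steps the cited references have to spend effort on.
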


\subsection{{K}n\"orrer periodicity}
Let $\eE \to Y$ be an algebraic vector bundle on a regular 
$\mathbb{C}$-scheme $Y$, and 
$s \colon Y \to \eE$ be a regular section of it, i.e. its zero locus 
\begin{align*}
Z\cneq (s=0) \subset Y
\end{align*}
has codimension equals to the rank of $\eE$. 
The section $s$ naturally defines the morphism
\begin{align*}
Q_s \colon 
\eE^{\vee} \to \mathbb{A}^1
\end{align*}
by sending $(y, v)$ for $y \in Y$ and $v \in \eE|_{y}^{\vee}$ 
to $\langle s(y), v \rangle$. 
We have the following diagram
\begin{align*}
\xymatrix{
\eE|_{Z}^{\vee} \ar@<-0.3ex>@{^{(}->}[r]^-{i} \ar[d]_-{p} & \eE^{\vee} \ar[r]^-{Q_s} \ar[d] & \mathbb{A}^1 \\
Z \ar@<-0.3ex>@{^{(}->}[r] & Y. &
}
\end{align*}
Note that $Q_s=0$ on $i(\eE|_{Z}^{\vee}) \subset \eE^{\vee}$. 
Let $\mathbb{C}^{\ast}$ acts on $Z$ trivially, 
and on $\eE^{\vee}$ with weight one 
on the fibers of the projection 
$\eE^{\vee} \to Y$. 
The following is the version of {K}n\"orrer periodicity
used in this paper: 
\begin{thm}\emph{(\cite{MR3071664, MR2982435, MR3631231})}\label{thm:knoer}
The functor 
\begin{align*}
i_{\ast} \circ p^{\ast} \colon D_{\mathbb{C}^{\ast}}(Z, 0) \to D_{\mathbb{C}^{\ast}}(\eE^{\vee}, Q_s)
\end{align*}
is an equivalence of triangulated categories. 
By composing it 
with the equivalence (\ref{equiv:w=0}), we obtain the
equivalence
\begin{align*}
i_{\ast} \circ p^{\ast} \circ \Xi \colon 
D^b(Z) \stackrel{\sim}{\to} D_{\mathbb{C}^{\ast}}(\eE^{\vee}, Q_s).
\end{align*}
\end{thm}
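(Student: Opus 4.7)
The plan is to prove the first equivalence; the second then follows by composing with (\ref{equiv:w=0}). Following the strategy of Shipman and Isik, I would organize the argument into three parts: well-definedness, fully-faithfulness, and essential surjectivity. Well-definedness is immediate from the observation noted just before the statement: $Q_s$ vanishes along $i(\eE|_Z^\vee)$, so any object of the form $i_\ast p^\ast \fF$ becomes an object of the factorization category whose odd and even differentials multiply to $Q_s = 0$, and the $\mathbb{C}^\ast$-equivariance is preserved by both $p^\ast$ and $i_\ast$.

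For fully-faithfulness, I would use the adjoint pair $(i_\ast p^\ast, \dR p_\ast i^!)$ and compute $\dR p_\ast i^! i_\ast p^\ast \fF$. Since $i$ is a regular closed embedding of codimension $r = \rk \eE$, there is a global Koszul resolution of $i_\ast \oO_{\eE|_Z^\vee}$ on $\eE^\vee$ whose differential is contraction by the tautological section of the pullback of $\eE$ to $\eE^\vee$. Viewed in the derived factorization category of $Q_s$, this Koszul complex folds together with the potential into a two-periodic matrix factorization whose total differential combines Koszul contraction with multiplication by the pullback of $s$; a direct local computation then yields $\dR p_\ast i^! i_\ast p^\ast \fF \simeq p^\ast \fF$, so by adjunction $\Hom(i_\ast p^\ast \fF, i_\ast p^\ast \gG) \simeq \Hom(\fF, \gG)$ for all $\fF, \gG \in D_{\mathbb{C}^\ast}(Z, 0)$.

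Essential surjectivity would come from a support argument. On the open complement $\eE^\vee \setminus \eE|_Z^\vee$ the section $s$ is nowhere zero on the base, so on each fiber of $\eE^\vee \to Y$ over $Y \setminus Z$ the potential $Q_s$ is a nonzero linear form and its derived factorization category vanishes. Consequently every object of $D_{\mathbb{C}^\ast}(\eE^\vee, Q_s)$ is equivalent to one supported along the critical locus of $Q_s$, which sits inside $\eE|_Z^\vee$; such objects lie in the essential image of $i_\ast$, and the $\mathbb{C}^\ast$-equivariant pullback $p^\ast$ is an equivalence onto the subcategory with weight-one fiber grading by the standard vector-bundle calculation.

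The main obstacle is the Koszul-type collapse when $\eE$ is globally non-trivial: one must verify that the combined Koszul-plus-potential differential on $\bigwedge^\bullet p^\ast \eE$ assembles into a well-defined matrix factorization of $Q_s$ with the correct $\mathbb{C}^\ast$-weights, and that the resulting Hom computations are compatible with restriction. Zariski-locally on $Y$ the bundle $\eE$ trivializes and $s$ becomes a regular sequence $(s_1, \ldots, s_r)$, reducing $Q_s$ to the classical Knörrer form $\sum s_i v_i$ for which the computation is explicit; globalizing then requires a descent argument tracking the tautological identifications across a trivializing cover, which is routine but technical.
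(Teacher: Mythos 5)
The paper does not prove this theorem; it is cited directly from Shipman, Isik, and Hirano (\cite{MR3071664, MR2982435, MR3631231}), so there is no internal argument to compare against. Your sketch follows the same general strategy as those references: reduce to the affine case via a Koszul-type factorization, prove fully-faithfulness by computing the unit of the adjunction $(i_\ast p^\ast, \dR p_\ast i^!)$ against the folded Koszul complex, and get essential surjectivity from the fact that $D_{\mathbb{C}^\ast}(\eE^\vee, Q_s)$ is supported on the critical locus of $Q_s$, which lies inside $i(\eE|_Z^\vee)$.

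Two points in your write-up are imprecise and would need to be corrected in a real proof. First, the Koszul matrix factorization of $Q_s$ on $\bigwedge^\bullet \pi^\ast\eE^\vee$ (with $\pi\colon \eE^\vee \to Y$ the bundle projection) has its two differentials being contraction by the \emph{pullback of the section} $\pi^\ast s \in \Gamma(\pi^\ast\eE)$ and exterior multiplication by the \emph{tautological section} $\tau \in \Gamma(\pi^\ast \eE^\vee)$; you describe both pieces as involving $s$ and a ``tautological section of the pullback of $\eE$'', but there is no such tautological section, and the composite of the two actual differentials gives $\langle \pi^\ast s, \tau\rangle = Q_s$ as required. Second, the essential surjectivity step is asserted rather than argued: being supported scheme-theoretically on $\eE|_Z^\vee$ does not by itself place an object in the image of $i_\ast$, and the claim that $p^\ast$ identifies $D_{\mathbb{C}^\ast}(Z,0)$ with a ``weight-one'' subcategory of factorizations on $\eE|_Z^\vee$ needs justification. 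The standard route (as in Isik and Shipman) is instead to show that the Koszul matrix factorization $i_\ast p^\ast \oO_Z$ (together with its twists) \emph{generates} $D_{\mathbb{C}^\ast}(\eE^\vee, Q_s)$, using the support statement plus a d\'evissage; generation, not a raw support argument, is what yields essential surjectivity once fully-faithfulness is in hand.

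These are gaps of precision in a sketch rather than a wrong approach; the overall strategy you propose is the correct one and matches the cited references.
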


\section{SOD via simple flips}\label{sec:sod}
Let $\widehat{U}$ be the formal completion of an affine 
space at the origin.  
In this section, we show that 
for a simple d-critical flip 
\begin{align*}
\widehat{M}^{+} \to \widehat{U} \leftarrow \widehat{M}^{-}
\end{align*}
 for smooth schemes 
$\widehat{M}^{\pm}$
satisfying some conditions, we have 
the SOD
\begin{align}\label{SOD:Mpm:U}
D^b(\widehat{M}^{+})=\langle \overbrace{D^b(\widehat{U}), \ldots, D^b(\widehat{U})}^{n}, D^b(\widehat{M}^{-}) \rangle.
\end{align}
The result is proved by combining derived factorization analogue 
of Bondal-Orlov's SOD associated with simple flips~\cite{B-O2} 
(see Theorem~\ref{thm:deq})
with the {K}n\"orrer periodicity
of derived factorization categories 
(see~Theorem~\ref{thm:knoer}). 

The main ingredient in this section is 
to show that the
kernel object of the fully-faithful functor
$D^b(\widehat{M}^{-}) \hookrightarrow D^b(\widehat{M}^{+})$ 
is given by the structure 
sheaf of the fiber product 
$\widehat{M}^{+} \times_{\widehat{U}} \widehat{M}^{-}$. 
This explicit description of the kernel 
will be important in the next section 
to globalize
the result in this section. 
 
\subsection{Simple toric flips}\label{subsec:flip}
Let $V^+$, $V^-$ be 
$\mathbb{C}$-vector spaces with dimensions $a$, $b$
respectively. 
We assume that $a\ge b$, and set 
\begin{align*}
n\cneq a-b \ge 0. 
\end{align*}
Let $\mathbb{C}^{\ast}$ acts on $V^+$, $V^-$
by weight $1$, $-1$ respectively. We 
fix bases of $V^{\pm}$ and 
denote the 
coordinates
of $V^+$, $V^-$ by 
\begin{align*}
\vec{x}=(x_1, \ldots, x_a), \ 
\vec{y}=(y_1, \ldots, y_b)
\end{align*}
respectively. 
Let $V^{\pm \ast} \cneq V^{\pm} \setminus \{0\}$, and 
define
 $Y^+$, $Y^-$ and $Z$ to be
\begin{align}\label{def:YZ}
&Y^+ \cneq \left[ \left((V^{+ \ast}) \times V^- \right)/\mathbb{C}^{\ast} \right]
=\mathrm{Tot}_{\mathbb{P}(V^+)}(\oO_{\mathbb{P}(V^+)}(-1) \otimes V^-)
 \\
\notag
&Y^- \cneq \left[ \left( V^+ \times (V^{- \ast}) \right)/\mathbb{C}^{\ast} \right]
=\mathrm{Tot}_{\mathbb{P}(V^-)}(\oO_{\mathbb{P}(V^-)}(-1) \otimes V^+) \\
\notag
&Z \cneq (V^+ \times V^-) \sslash \mathbb{C}^{\ast}
=\Spec \mathbb{C}[x_i y_j : 1\le i\le a, 1\le j\le b]. 
\end{align}
We have the toric flip diagram, called
\textit{simple flip} (see~\cite{Rei})
\begin{align}\notag
\xymatrix{
Y^+ \ar@{.>}[rr]^-{\phi} \ar[rd]_-{f^+} & & Y^- \ar[ld]^-{f^-} \\
& Z. &
}
\end{align}
We also have the projections and closed embeddings
\begin{align}\label{emb:closed}
\mathrm{pr}^{\pm} \colon Y^{\pm} \to \mathbb{P}(V^{\pm}), \ 
i^{\pm} \colon 
\mathbb{P}(V^{\pm}) \hookrightarrow Y^{\pm}
\end{align}
where $i^{\pm}$ are the zero sections of $\mathrm{pr}^{\pm}$. 

By setting $W \cneq Y^+ \times_{Z} Y^-$, 
we have the following diagram
\begin{align}\label{dia:blow}
\xymatrix{
& W \ar[ld]_-{p^+} \ar[rd]^-{p^-} & \\
Y^+ & & Y^-
}
\end{align}
where $p^{\pm}$ are the projections. 
Note that $p^{\pm}$ are the blow-ups 
of $Y^{\pm}$
at the smooth loci
$i^{\pm}(\mathbb{P}(V^{\pm}))$. 
The fiber product $W$ is also described as 
\begin{align}\notag
W 
&=\mathrm{Tot}_{\mathbb{P}(V^+) \times \mathbb{P}(V^-)}(\oO_{\mathbb{P}(V^+) \times \mathbb{P}(V^-)}(-1, -1)) \\
\label{W:quot}
&=\left[ \left( (V^{+\ast} ) \times (V^{-\ast}) 
\times \mathbb{C} \right)/(\mathbb{C}^{\ast})^2  \right].
\end{align}
Here $(s_1, s_2) \in (\mathbb{C}^{\ast})^2$ acts on 
$V^{+} \times V^{-} \times \mathbb{C}$ by 
\begin{align*}
(s_1, s_2) \cdot (\vec{x}, \vec{y}, t)=(s_1 \vec{x},  s_2^{-1} \vec{y}, s_1^{-1} s_2 t). 
\end{align*}
Under the description of $W$ in (\ref{W:quot}), the projections
$p^{\pm} \colon W \to Y^{\pm}$ are induced by
maps
\begin{align*}
V^+ \times V^- \times \mathbb{C} \to V^+ \times V^-, \ 
(\mathbb{C}^{\ast})^2 \to \mathbb{C}^{\ast}
\end{align*}
defined by 
\begin{align*}
&p^+ \colon (\vec{x}, \vec{y}, t) \mapsto (\vec{x}, t\vec{y}), \ (s_1, s_2) \mapsto s_1, \\
&p^- \colon (\vec{x}, \vec{y}, t) \mapsto (t\vec{x}, \vec{y}), \ (s_1, s_2) 
\mapsto s_2
\end{align*}
respectively. 
Let $s \in \mathbb{C}^{\ast}$ acts on 
$Y^+$, $Y^-$, $W$
by
\begin{align}\label{act:C}
s \cdot (\vec{x}, \vec{y})=(\vec{x}, s\vec{y}), \ 
s \cdot (\vec{x}, \vec{y})=(s\vec{x}, \vec{y}), \ 
s \cdot (\vec{x}, \vec{y}, t)=(\vec{x}, \vec{y}, st)
\end{align}
respectively. 
Then the diagram (\ref{dia:blow}) is equivariant 
with respect to the above $\mathbb{C}^{\ast}$-actions.

\subsection{Critical loci}\label{subsec:crit}
Let $\widehat{U}$ be a smooth 
$\mathbb{C}$-scheme of dimension $g$, 
given by
\begin{align*}
\widehat{U} \cneq \Spec \mathbb{C}[[u_1, \ldots, u_g]].
\end{align*}
Let us take  
an element $w \in \Gamma(\oO_{Z_{\widehat{U}}})$ written as 
\begin{align}\label{def:g}
w=\sum_{i=1}^a \sum_{j=1}^b x_i y_j w_{ij}(\vec{u})
\end{align}
for some $w_{ij}(\vec{u}) \in \Gamma(\oO_{\widehat{U}})$. 
We consider the following commutative diagram
\begin{align}\label{def:pm0}
\xymatrix{
& W_{\widehat{U}} \ar[ld]_-{p_{\widehat{U}}^+} \ar[rd]^-{p_{\widehat{U}}^-} & \\
Y^{+}_{\widehat{U}} \ar[rd]_-{f_{\widehat{U}}^+}
\ar@/_15pt/[rdd]_-{w^{+}}  \ar@{.>}[rr]^-{\phi_{\widehat{U}}}& &
 Y^{-}_{\widehat{U}} 
\ar[ld]^-{f_{\widehat{U}}^-}
\ar@/^15pt/[ldd]^-{w^-}\\
& Z_{\widehat{U}} \ar[d]_-{w} &  \\
& \mathbb{A}^1 &
}
\end{align}
Then the composition 
\begin{align*}
\widetilde{w} \cneq p_{\widehat{U}}^{+\ast}w^{+}=p_{\widehat{U}}^{-\ast}w^- \colon W_{\widehat{U}} \to \mathbb{A}^1
\end{align*}
is written as 
\begin{align}\label{w:tilde}
\widetilde{w}=t \sum_{i, j} x_i y_j w_{ij}(\vec{u})
\end{align}
in the description of $W$ by (\ref{W:quot}). 
We define $\widehat{M}^{\pm}$ to be
\begin{align*}
\widehat{M}^{\pm} \cneq 
\{dw^{\pm}=0\}
\subset Y_{\widehat{U}}^{\pm}.
\end{align*}

\begin{lem}\label{lem:Msmooth}
Suppose that 
$\widehat{M}^{\pm}$ are smooth and irreducible of dimension 
\begin{align}\label{dim:M}
\dim \widehat{M}^{+}=n+g-1, \ \dim \widehat{M}^{-}=-n+g-1
\end{align}
respectively. 
Then 
$\widehat{M}^{\pm}$ are contained in the 
images of 
$i_{\widehat{U}}^{\pm} \colon \mathbb{P}(V^{\pm})_{\widehat{U}} \hookrightarrow
Y_{\widehat{U}}^{\pm}$, 
where $i^{\pm}$ are given in (\ref{emb:closed}). 
Moreover  
we have
\begin{align}\label{write:M}
&\widehat{M}^{+}=\left\{(\vec{x}, \vec{u}) 
\in \mathbb{P}(V^{+})_{\widehat{U}} : 
\sum_{i=1}^a x_i w_{ij}(\vec{u})=0 \mbox{ for all }
1\le j\le b\right\}, \\
\notag
&\widehat{M}^{-}
=\left\{(\vec{y}, \vec{u}) \in \mathbb{P}(V^{-})_{\widehat{U}} : 
\sum_{j=1}^b y_j w_{ij}(\vec{u})=0 \mbox{ for all }
1\le i\le a\right\}.
\end{align}
\end{lem}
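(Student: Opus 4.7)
The plan is to prove both parts simultaneously by identifying $\widehat{M}^+$ with its slice along the zero section $i^+_{\widehat{U}}$, and then appealing to symmetry $\vec{x}\leftrightarrow\vec{y}$ for $\widehat{M}^-$. First I would exploit the fiberwise $\mathbb{C}^*$-action on $Y^+_{\widehat{U}}$ from (\ref{act:C}) which scales $\vec{y}$. Since $w=\sum x_i y_j w_{ij}(\vec{u})$ is linear in $\vec{y}$, the pullback $w^+$ has weight $+1$ under this action, hence $\widehat{M}^+=\{dw^+=0\}$ is $\mathbb{C}^*$-invariant. Because $\vec{y}$ is a vector-space coordinate, the action extends to an $\mathbb{A}^1$-action with limit at $s=0$ sending $(\vec{x},\vec{y},\vec{u})$ to $(\vec{x},0,\vec{u})$, and closedness of $\widehat{M}^+$ in $Y^+_{\widehat{U}}$ forces these limits into $\widehat{M}^+$. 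Thus, assuming $\widehat{M}^+\neq \emptyset$ (the empty case is trivial), the slice
\begin{align*}
\widetilde{Z}_0^+ \cneq \widehat{M}^+ \cap i^+_{\widehat{U}}(\mathbb{P}(V^+)_{\widehat{U}})
\end{align*}
is non-empty.

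Next I would compute the critical equations in the chart $\{x_1\neq 0\}$ with local coordinates $(x_2,\ldots,x_a,y_1,\ldots,y_b,u_1,\ldots,u_g)$, writing
\begin{align*}
w^+=\sum_{j=1}^b y_j L_j(\vec{x},\vec{u}), \qquad L_j(\vec{x},\vec{u}) \cneq \sum_{i=1}^a x_i w_{ij}(\vec{u}).
\end{align*}
Along $\{\vec{y}=0\}$ the derivatives $\partial_{x_k} w^+$ and $\partial_{u_l} w^+$ vanish identically (every term is proportional to some $y_j$), whereas $\partial_{y_j} w^+ = L_j$. Hence, globalizing across the standard affine cover of $\mathbb{P}(V^+)$,
\begin{align*}
\widetilde{Z}_0^+=\{(\vec{x},\vec{u})\in \mathbb{P}(V^+)_{\widehat{U}} : L_j(\vec{x},\vec{u})=0 \text{ for all } 1\le j\le b\},
\end{align*}
which is precisely the right-hand side of (\ref{write:M}).

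Finally a dimension count closes the argument: $\widetilde{Z}_0^+$ is cut out by $b$ equations in the smooth ambient $\mathbb{P}(V^+)_{\widehat{U}}$ of dimension $a-1+g$, so every irreducible component has codimension at most $b$, i.e.\ dimension at least $a-1+g-b = n+g-1$. Combined with the inclusion $\widetilde{Z}_0^+ \subset \widehat{M}^+$ and the irreducibility of $\widehat{M}^+$ of dimension $n+g-1$, we conclude $\widetilde{Z}_0^+ = \widehat{M}^+$ as underlying sets, yielding both the containment $\widehat{M}^+ \subset i^+_{\widehat{U}}(\mathbb{P}(V^+)_{\widehat{U}})$ and the explicit description (\ref{write:M}). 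The argument for $\widehat{M}^-$ is identical after swapping $V^+\leftrightarrow V^-$ and $\vec{x}\leftrightarrow\vec{y}$. The main subtle point is the first paragraph's $\mathbb{C}^*$-limit step: since $Y^+_{\widehat{U}}$ is not proper over $\widehat{U}$, a priori $\widehat{M}^+$ could live entirely off the zero section, and it is precisely the weight-$+1$ scaling together with closedness of the critical locus that rules this out.
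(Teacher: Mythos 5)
Your proof is correct and follows essentially the same route as the paper's: intersect the critical locus with the zero section $\{\vec{y}=0\}$, observe that this slice is cut out by the $b$ linear equations $L_j(\vec{x},\vec{u})=\sum_i x_i w_{ij}(\vec{u})=0$ inside $\mathbb{P}(V^+)_{\widehat{U}}$, invoke Krull's height bound to get $\dim \geq n+g-1$, and conclude by the irreducibility and dimension hypothesis on $\widehat{M}^+$ that the containment is an equality. The paper simply defines $N^+$ to be the right-hand side of (\ref{write:M}), records the closed embedding $N^+\hookrightarrow\widehat{M}^+$, and runs this dimension count.

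The one genuinely new ingredient in your write-up is the first paragraph: the weight-one $\mathbb{C}^{\ast}$-action contracting the fibers of $\mathrm{pr}^+$, whose limit at $s=0$ lands in the zero section, shows that $\widetilde{Z}_0^+$ is nonempty as soon as $\widehat{M}^+$ is. This is a worthwhile observation, because the paper's inequality $\dim N^+\ge a+g-1-b$ is vacuous if $N^+=\emptyset$; the paper leaves that nonemptiness implicit. Note that since $\widehat{U}$ is the spectrum of a complete local ring, nonemptiness of $N^+$ really does reduce to the fiber over the closed point $\vec{u}=0$, so your flow argument is exactly what rules out $\widehat{M}^+$ hiding entirely in the locus $\vec{y}\ne 0$. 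One small point you could tighten: you conclude $\widetilde{Z}_0^+=\widehat{M}^+$ only as sets, but since $\widehat{M}^+$ is assumed smooth (hence reduced) and the closed immersion $\widetilde{Z}_0^+\hookrightarrow\widehat{M}^+$ is surjective on points, its ideal sheaf lies in the nilradical of $\oO_{\widehat{M}^+}$, which vanishes; so the equality upgrades automatically to an isomorphism of schemes, which is what the formula (\ref{write:M}) asserts and what is used later in the paper.
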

\begin{proof}
Let $N^{+}$ be the scheme defined by the RHS of (\ref{write:M}). 
Note that we obviously have the closed embedding
\begin{align}\label{emb:NM}
N^{+} \hookrightarrow \widehat{M}^{+}, \ 
(\vec{x}, \vec{u}) \mapsto (\vec{x}, \vec{y}=0, \vec{u}).
\end{align}
Since $N^+$ is defined by $b$-equations on 
the smooth scheme $\mathbb{P}(V^+)_{\widehat{U}}$
of dimension $a+g-1$, 
we have 
\begin{align*}
\dim N^+ \ge a+g-1-b=n+g-1.
\end{align*}
Therefore the assumption on $\widehat{M}^{+}$ implies that 
the embedding (\ref{emb:NM}) is an isomorphism. 
The claim for $\widehat{M}^{-}$ is similarly proved. 
\end{proof}

\begin{rmk}\label{rmk:prepared}
The assumption of Lemma~\ref{lem:Msmooth}
is satisfied if 
$g=ab$ and $w_{ij}(\vec{u})=u_{ij}$
where $\{u_{ij}\}_{1\le i\le a, 1\le j\le b}$ 
is a coordinate system of $\widehat{U}$. 
If the assumption of Lemma~\ref{lem:Msmooth}
is satisfied, then the projections
$\widehat{M}^{\pm} \to \widehat{U}$ are 
well-presented families of projective spaces
defined in~\cite[Section~3]{MR0349687}. 
\end{rmk}

Under the assumption of Lemma~\ref{lem:Msmooth}, 
we have
 $f_{\widehat{U}}^{\pm}(\widehat{M}^{\pm}) \subset \{0\} \times \widehat{U}$. 
Therefore 
$\pi^{\pm} \cneq (f^{\pm}_{\widehat{U}})|_{\widehat{M}^{\pm}}$
induces the diagram
\begin{align}\label{dia:pi0}
\xymatrix{
\widehat{M}^{+} \ar[rd]_-{\pi^+} & & \widehat{M}^{-} \ar[ld]^-{\pi^-} \\
& \widehat{U}. &
}
\end{align}
Moreover for each $c \in \mathbb{Z}$, 
we have the line bundles
\begin{align*}
\oO_{\widehat{M}^{\pm}}(c) \cneq 
\oO_{\mathbb{P}(V^{\pm})_{\widehat{U}}}(c)|_{\widehat{M}^{\pm}}
\in \Pic(\widehat{M}^{\pm}). 
\end{align*}

\subsection{SOD of derived factorization categories under simple flips}
Let us consider the diagram (\ref{def:pm0}). 
Since $w^+$, $w^-$ and $\widetilde{w}$ are of
weight one with respect to the 
$\mathbb{C}^{\ast}$-actions (\ref{act:C}), 
we have the 
associated derived factorization categories
\begin{align*}
D_{\mathbb{C}^{\ast}}(Y_{\widehat{U}}^+, w^+), \ 
D_{\mathbb{C}^{\ast}}(Y_{\widehat{U}}^-, w^-), \ 
D_{\mathbb{C}^{\ast}}(W_{\widehat{U}}, \widetilde{w})
\end{align*}
respectively. 
Since the diagram (\ref{dia:blow}) is $\mathbb{C}^{\ast}$-equivariant, 
we have the functors
\begin{align*}
&\dL p_{\widehat{U}}^{-\ast} \colon 
D_{\mathbb{C}^{\ast}}(Y^-_{\widehat{U}}, w^-)
\to D_{\mathbb{C}^{\ast}}(W_{\widehat{U}}, \widetilde{w}), \\ 
&\dR p_{\widehat{U}\ast}^+ \colon 
D_{\mathbb{C}^{\ast}}(W_{\widehat{U}}, \widetilde{w})
\to 
D_{\mathbb{C}^{\ast}}(Y_{\widehat{U}}^+, w^+).
\end{align*}
By composing them, we obtain the functor
\begin{align}\label{def:PhiY}
\Psi_Y \cneq \dR p_{\widehat{U}\ast}^+ \circ \dL p_{\widehat{U}}^{-\ast} \colon 
D_{\mathbb{C}^{\ast}}(Y^-_{\widehat{U}}, w^-)
\to
D_{\mathbb{C}^{\ast}}(Y_{\widehat{U}}^+, w^+).
\end{align}
Let
\begin{align*}
g \colon \mathbb{P}(V^+)_{\widehat{U}} \to \widehat{U}, \
i_{\widehat{U}}^+ \colon \mathbb{P}(V^+)_{\widehat{U}} \hookrightarrow 
Y_{\widehat{U}}^+
\end{align*}
 be the projection, 
the inclusion into the zero section (\ref{emb:closed})
respectively.  
We also have the functor
\begin{align}\label{def:ups}
\Upsilon_Y \cneq i^{+}_{\widehat{U}\ast} \circ \dL g^{\ast} \colon
 D_{\mathbb{C}^{\ast}}(\widehat{U}, 0) \to 
D_{\mathbb{C}^{\ast}}(Y_{\widehat{U}}^+, w^+).
\end{align}
Here $\mathbb{C}^{\ast}$ acts on 
$\widehat{U}$, $\mathbb{P}(V^+)_{\widehat{U}}$ trivially. 
The following result should be well-known, 
but we include a proof here as we cannot find a reference. 
\begin{thm}\label{thm:deq}
(i) 
The functor $\Psi_Y$ in (\ref{def:PhiY}) 
is fully-faithful. 

(ii) If $n\ge 1$, the functor 
$\Upsilon_Y$ in (\ref{def:ups}) is fully-faithful. 

(iii)
By setting $\Upsilon_Y^i \cneq \otimes \oO_{Y_{\widehat{U}}^+}(i) \circ \Upsilon_Y$, 
we have 
the SOD
\begin{align*}
D_{\mathbb{C}^{\ast}}(Y_{\widehat{U}}^+, w^+)
=\langle \Imm \Upsilon_Y^{-n}, \ldots, \Imm \Upsilon_Y^{-1}, 
\Imm \Psi_Y \rangle. 
\end{align*}
\end{thm}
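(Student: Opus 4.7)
The strategy is to deduce the theorem from the classical Bondal--Orlov semiorthogonal decomposition for a simple flip, combined with the transfer principle (Theorem~\ref{thm:FF}) which lifts fully-faithful Fourier--Mukai functors on $D^b$ to fully-faithful functors on derived factorization categories. First I would establish the potential-free ($w=0$) analogue of (iii) over $\widehat{U}$,
\begin{align*}
D^b(Y^+_{\widehat{U}}) = \langle \Imm \Upsilon_{Y,0}^{-n}, \ldots, \Imm \Upsilon_{Y,0}^{-1}, \Imm \Psi_{Y,0} \rangle,
\end{align*}
where $\Psi_{Y,0} \cneq \dR p^+_{\widehat{U}\ast} \circ \dL p^{-\ast}_{\widehat{U}}$ is the ordinary FM functor with kernel $\oO_{W_{\widehat{U}}}$ and $\Upsilon^i_{Y,0}(-) \cneq i^+_{\widehat{U}\ast}(\dL g^{\ast}(-) \otimes \oO(i))$. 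This follows by comparing Orlov's blowup formulas for $p^+\colon W\to Y^+$ (codimension $b$) and $p^-\colon W\to Y^-$ (codimension $a$), refining each copy of $D^b(\mathbb{P}(V^{\pm})_{\widehat{U}})$ via Orlov's projective bundle theorem into copies of $D^b(\widehat{U})$, and mutating to express $D^b(Y^+_{\widehat{U}})$ in terms of $D^b(Y^-_{\widehat{U}})$ and $n=a-b$ copies of $D^b(\widehat{U})$ embedded by $\Upsilon^{-n}_{Y,0},\ldots,\Upsilon^{-1}_{Y,0}$.

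For (i), I apply Theorem~\ref{thm:FF} with $Y_1=Y^-_{\widehat{U}}$, $Y_2=Y^+_{\widehat{U}}$, $T=Z_{\widehat{U}}$, $G=\mathbb{C}^{\ast}$ and kernel $\qQ=\oO_{W_{\widehat{U}}}$: fully-faithfulness of $\Phi^{\oO_{W_{\widehat{U}}}}$ from Step~1 implies that the transferred functor is fully-faithful, and it is identified with $\Psi_Y$ via the projection formula and the compatibility diagram (\ref{commute:Xi}) relating $\Xi$ to $\dR p^{\pm}_{\widehat{U}\ast}$ and $\dL p^{\pm\ast}_{\widehat{U}}$. For (ii), I apply Theorem~\ref{thm:FF} with $Y_1=\widehat{U}$ (trivial $\mathbb{C}^{\ast}$-action, $w_1=0$), $Y_2=Y^+_{\widehat{U}}$, and kernel $i^+_{\widehat{U}\ast}(\oO_{\mathbb{P}(V^+)_{\widehat{U}}})$. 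The hypothesis $n\ge 1$ enters at the $D^b$-level: an adjunction/Koszul calculation using the conormal bundle $\oO(1)\otimes V^{-\vee}$ of $i^+$ shows that $\dL i^{+\ast}i^+_{\ast}\oO = \bigoplus_k \Lambda^k(\oO(1)\otimes V^{-\vee})[k]$, and $\dR g_{\ast}$ applied to these twists vanishes in the relevant range precisely when $b<a$, yielding fully-faithfulness of $\Upsilon^0_{Y,0}$ and hence of every $\Upsilon^i_{Y,0}$ by twisting.

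For (iii), after the above, I verify semiorthogonality and generation within $D_{\mathbb{C}^{\ast}}(Y^+_{\widehat{U}},w^+)$. Semiorthogonality among the $\Imm \Upsilon_Y^i$ reduces, via adjunction together with $w^+|_{\mathbb{P}(V^+)_{\widehat{U}}}=0$, to a Koszul computation inside $D_{\mathbb{C}^{\ast}}(\mathbb{P}(V^+)_{\widehat{U}},0)\simeq D^b(\mathbb{P}(V^+)_{\widehat{U}})$, matching the $D^b$-level vanishings from Step~1. Semiorthogonality between $\Imm \Upsilon^i_Y$ and $\Imm \Psi_Y$ is handled by base change along the diagram (\ref{def:pm0}), again reducing to a Step~1 computation. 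For generation, I would show that any factorization object of $D_{\mathbb{C}^{\ast}}(Y^+_{\widehat{U}},w^+)$ lies in the triangulated envelope of the images by pushing the $D^b$-level SOD through the $i_{\ast}\circ \mathrm{forg}^{\mathbb{C}^{\ast}}\circ \Xi$ construction of Theorem~\ref{thm:FF}. The main obstacle is precisely this SOD-level transfer: while Theorem~\ref{thm:FF} gives fully-faithfulness for individual transferred functors, assembling them into a global SOD in the factorization category requires either a dedicated categorical lemma asserting that the transfer preserves SODs whose pieces arise from FM kernels, or a windows/VGIT argument using the GIT presentation of $Y^{\pm}$ as $(V^+\times V^-)\sslash \mathbb{C}^{\ast}$ at opposite stabilities.
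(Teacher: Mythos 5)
For parts (i) and (ii) your route coincides with the paper's: both deduce fully-faithfulness by applying Theorem~\ref{thm:FF} to the $D^b$-level kernels $\oO_{W_{\widehat{U}}}$ and $i^+_{\widehat{U}\ast}\oO_{\mathbb{P}(V^+)_{\widehat{U}}}$. The only difference is that you propose to re-derive the $D^b$-level fully-faithfulness from Orlov's blowup and projective-bundle theorems, whereas the paper simply cites Bondal--Orlov \cite{B-O2} for $\Phi^{\oO_W}$; either works.

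For (iii) you correctly single out the windows/VGIT argument as the right tool, and this is exactly what the paper does: it invokes \cite[Theorem~3.5.2]{BFK2} for the GIT presentation of $Y^{\pm}$ as the two GIT quotients of $V^+\times V^-$ over $\widehat{U}$, obtaining both semiorthogonality and generation in one stroke. You present the windows argument as one of two candidate workarounds and spend effort on a separate Koszul verification of semiorthogonality, which is not needed once BFK2 is applied. More importantly, your proposal misses the step that actually occupies the paper's proof of (iii): the windows theorem produces a fully-faithful functor $\Psi_Y'$ fitting a commuting square with the window subcategories $\wW_{(-b,0]}\subset \wW_{(-b,a-b]}$, and this $\Psi_Y'$ is \emph{a priori} different from the FM functor $\Psi_Y=\dR p^+_{\widehat{U}\ast}\circ \dL p^{-\ast}_{\widehat{U}}$ that you want in the statement. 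The paper closes this gap by computing generators: the window $\wW_{(-b,0]}$ restricts to the line bundles $\oO_{Y^-_{\widehat{U}}}(i)$, $0\le i\le b-1$, and on the other side one checks directly that $\Phi^{\oO_W}(\oO_{Y^-_{\widehat{U}}}(i))=\oO_{Y^+_{\widehat{U}}}(-i)$ in the same range, so $\Imm\Psi_Y=\Imm\Psi_Y'$. Without this identification of images, one has the SOD but with the wrong description of the residual piece, which would break the globalization in Section~\ref{sec:dcrit} that requires the kernel to be the structure sheaf of the fiber product.
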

\begin{proof}
(i) The functor $\Psi_Y$ is written as
$\Psi^{\widetilde{\oO}_W}$ 
in the notation of Theorem~\ref{thm:FF}. 
On the other hand, the 
functor 
\begin{align}\label{funct:PhiW}
\Phi^{\oO_W} \colon D^b(Y_{\widehat{U}}^-) \to D^b(Y_{\widehat{U}}^+)
\end{align}
is fully-faithful by~\cite{B-O2}. 
Therefore (i) follows from Theorem~\ref{thm:FF}.
The proof of (ii) is similar. 

We prove (iii). 
Let us recall that we 
have a similar SOD 
using \textit{windows}~\cite{MR3327537, BFK2}. 
Let $T_j=\mathbb{C}^{\ast}$ for $j=1, 2$
acts on $V^+ \times V^- \times \widehat{U}$ by 
weight $(1, -1, 0)$ for $j=1$, 
and $(1, 0, 0)$ for $j=2$. 
Then the open immersions
\begin{align}\label{eta:open}
\eta^{\pm} \colon Y_{\widehat{U}}^{\pm} \hookrightarrow 
[(V^+ \times V^-)_{\widehat{U}}/T_1]
\end{align}
are $\mathbb{C}^{\ast}$-equivariant, where 
the $\mathbb{C}^{\ast}$-action on the LHS 
is given by (\ref{act:C}) and 
that on the RHS is given by the 
above $T_2$-action. 
Let $\oO_{\bullet}(i)$ be the 
$T_1$-equivariant line bundle on 
$\Spec \mathbb{C}$, 
given by a one dimensional $T_1$-representation  
with weight $i$. 
We denote by $\oO_{(V^+ \times V^-)_{\widehat{U}}}(i)$
the pull-back of $\oO_{\bullet}(i)$ under the 
structure morphism 
\begin{align*}
(V^+ \times V^-)_{\widehat{U}} \to \Spec \mathbb{C}.
\end{align*}
Let $\chi \colon T_1 \times T_2 \to \mathbb{C}^{\ast}$
be the second projection, and take
$w \in \Gamma(\oO_{(V^+ \times V^-)_{\widehat{U}}})$ as in (\ref{def:g}). 
For a subset $I \subset \mathbb{R}$, 
the window subcategory 
\begin{align}\label{window}
\wW_I \subset D_{T_1 \times T_2}((V^+ \times V^-)_{\widehat{U}}, \chi ,w)
\end{align}
is defined 
to be 
the thick triangulated subcategory generated by 
the factorizations (\ref{factorization}) 
where $\fF_0$, $\fF_1$ are of the form
\begin{align*}
\fF_j=\bigoplus_{-i \in I \cap \mathbb{Z}} 
\oO_{(V^+ \times V^-)_{\widehat{U}}}(i)^{\oplus l_{i, j}}, \ j=0, 1, \ 
l_{i, j} \in \mathbb{Z}_{\ge 0}
\end{align*}
as $T_1$-equivariant sheaves.
Here we regard $(T_1 \times T_2)$-equivariant 
sheaves $\fF_j$ as $T_1$-equivariant 
sheaves by the inclusion 
$T_1 \hookrightarrow T_1 \times T_2$, 
$t_1 \mapsto (t_1, 1)$. 
By~\cite[Theorem~3.5.2]{BFK2}, 
there exists a fully-faithful functor
\begin{align*}
\Psi_Y' \colon D_{\mathbb{C}^{\ast}}(Y^-_{\widehat{U}}, w^-)
\to
D_{\mathbb{C}^{\ast}}(Y_{\widehat{U}}^+, w^+)
\end{align*}
which fits into the following commutative diagram
\begin{align}\label{dia:Psi'}
\xymatrix{
\wW_{(-b, 0]} \ar[r]^-{\eta^{-\ast}}_-{\sim} \ar[d]  & D_{\mathbb{C}^{\ast}}(Y_{\widehat{U}}^-, w^-) \ar[d]^-{\Psi_Y'} \\
\wW_{(-b, a-b]} \ar[r]_-{\eta^{+\ast}}^-{\sim} & 
D_{\mathbb{C}^{\ast}}(Y_{\widehat{U}}^+, w^+).
}
\end{align}
Here the horizontal arrows are equivalences of
triangulated categories, 
defined by pull-backs via open 
immersions (\ref{eta:open}) restricted to 
$\wW_I$, and 
the left vertical arrow is a natural 
inclusion. 
Moreover by \textit{loc.~cit.~}, we have the SOD
\begin{align*}
D_{\mathbb{C}^{\ast}}(Y_{\widehat{U}}^+, w^+)
=\langle \Imm \Upsilon_Y^{-n}, \ldots, \Imm \Upsilon_Y^{-1}, 
\Imm \Psi_Y' \rangle. 
\end{align*}
It is enough to show that 
$\Imm \Psi_Y=\Imm \Psi_Y'$. 
Note that we have
\begin{align*}
\eta^{-\ast}(\oO_{(V^+ \times V^-)_{\widehat{U}}}(i))=\oO_{Y_{\widehat{U}}^-}(-i), \ 
\eta^{+\ast}(\oO_{(V^+ \times V^-)_{\widehat{U}}}(i))
=\oO_{Y_{\widehat{U}}^+}(i).
\end{align*}
By the diagram (\ref{dia:Psi'}),
it follows that $\Imm \Psi_Y'$ is 
generated by 
factorizations (\ref{factorization})
such that 
$\fF_0$, $\fF_1$ are of the form
\begin{align*}
\fF_j=\bigoplus_{0 \le i\le b-1} \oO_{Y_{\widehat{U}}^+}(-i)^{\oplus l_{i, j}}, \ 
j=0, 1, \ l_{i, j} \in \mathbb{Z}_{\ge 0}. 
\end{align*}
On the other hand, 
an easy calculation shows that
\begin{align}\label{fact:Phi}
\Phi^{\oO_W}(\oO_{Y_{\widehat{U}}^-}(i))=\oO_{Y_{\widehat{U}}^+}(-i), \ 
0 \le i\le b-1
\end{align}
where $\Phi^{\oO_W}$ is the functor (\ref{funct:PhiW}). 
Together with the equivalence of the top horizontal arrow of (\ref{dia:Psi'}), 
it follows that $\Imm \Psi_Y$ is also generated by the objects of the form (\ref{fact:Phi}). 
Therefore $\Imm \Psi_Y=\Imm \Psi_Y'$ holds. 

\end{proof}

\subsection{SOD in the complete local setting}
We return to the situation in Subsection~\ref{subsec:crit}. 
Under the assumption of Lemma~\ref{lem:Msmooth}, we have the following diagram
\begin{align*}
\xymatrix{
A^{\pm} \ar[d]_-{q^{\pm}}
\ar@<-0.3ex>@{^{(}->}[r]^-{j^{\pm}} 
\ar@{}[dr]|\square
& Y_{\widehat{U}}^{\pm} \ar[r]^{w^{\pm}} 
\ar[d]^-{\mathrm{pr}_{\widehat{U}}^{\pm}} & \mathbb{A}^1 \\
\widehat{M}^{\pm} \ar@<-0.3ex>@{^{(}->}[r] & \mathbb{P}(V^{\pm})_{\widehat{U}}
}
\end{align*}
Here 
$A^{\pm}$ are defined by the 
above Cartesian square. 
By Theorem~\ref{thm:knoer},
the above diagram induces the equivalence
\begin{align}\label{equiv:01}
j_{\ast}^{\pm} \circ q^{\pm \ast} \colon 
D_{\mathbb{C}^{\ast}}(\widehat{M}^{\pm}, 0) \stackrel{\sim}{\to}
D_{\mathbb{C}^{\ast}}(Y_{\widehat{U}}^{\pm}, w^{\pm}).
\end{align} 
Here $\mathbb{C}^{\ast}$ acts on $M^{\pm}$ trivially, and 
on $Y_{\widehat{U}}^{\pm}$
with weight one on fibers of $\mathrm{pr}_{\widehat{U}}^{\pm}$. 
Let $B \subset (\mathbb{P}(V^+) \times \mathbb{P}(V^-))_{\widehat{U}}$
be defined by
\begin{align*}
B\cneq \left\{ (\vec{x}, \vec{y}, \vec{u}) 
\in (\mathbb{P}(V^+) \times \mathbb{P}(V^-))_{\widehat{U}} 
:
\sum_{i, j} x_i y_j w_{ij}(\vec{u})=0
 \right\}.
\end{align*}
Similarly, we have the following diagram
\begin{align*}
\xymatrix{
E\ar[d]_-{\widetilde{q}}
\ar@<-0.3ex>@{^{(}->}[r]^-{\widetilde{j}} 
\ar@{}[dr]|\square
& W_{\widehat{U}} \ar[r]^{\widetilde{w}} \ar[d]^-{\mathrm{pr}_{\widehat{U}}}
 & \mathbb{A}^1  \\
B \ar@<-0.3ex>@{^{(}->}[r] & (\mathbb{P}(V^{+}) \times 
\mathbb{P}(V^-))_{\widehat{U}}
}
\end{align*}
where
the right vertical arrow is the projection 
and $E$ is defined by the above Cartesian square. 
Again by Theorem~\ref{thm:knoer}
and the description of $\widetilde{w}$ in (\ref{w:tilde}), 
the above diagram induces the equivalence
\begin{align}\label{equiv:02}
\widetilde{j}_{\ast} \circ \widetilde{q}^{\ast} \colon 
D_{\mathbb{C}^{\ast}}(B, 0) \stackrel{\sim}{\to}
D_{\mathbb{C}^{\ast}}(W_{\widehat{U}}, \widetilde{w}).
\end{align} 
Here $\mathbb{C}^{\ast}$ acts on $B$ trivially, and
on $W_{\widehat{U}}$ with weight one 
on fibers of $\mathrm{pr}_{\widehat{U}}$. 
Let $F^{\pm} \subset (\mathbb{P}(V^+) \times \mathbb{P}(V^-))_{\widehat{U}}$
be defined by
\begin{align}\notag
&F^{+} \cneq \left\{(\vec{x}, \vec{y}, \vec{u}) \in 
(\mathbb{P}(V^+) \times \mathbb{P}(V^-))_{\widehat{U}} : 
\sum_{i=1}^a x_i w_{ij}(\vec{u})=0 \mbox{ for all }
1\le j\le b\right\}, \\
\notag
&F^- \cneq \left\{(\vec{x}, \vec{y}, \vec{u}) \in 
(\mathbb{P}(V^+) \times \mathbb{P}(V^-))_{\widehat{U}} : 
\sum_{j=1}^b y_j w_{ij}(\vec{u})=0 \mbox{ for all }
1\le i\le a\right\}.
\end{align}
Note that we have 
$F^{\pm} \subset B$. 
Also the projections
$(\mathbb{P}(V^+) \times \mathbb{P}(V^-))_{\widehat{U}} \to 
\mathbb{P}(V^{\pm})_{\widehat{U}}$
restricted to $F^{\pm}$ give 
morphisms 
$F^{\pm} \to \widehat{M}^{\pm}$, which are 
trivial $\mathbb{P}(V_{\mp})$-bundles. 
So we have the diagram
\begin{align*}
\xymatrix{
F^{\pm} 
\ar@<-0.3ex>@{^{(}->}[r]^-{k^{\pm}} \ar[d]_-{r^{\pm}} & B \\
\widehat{M}^{\pm}.  &
} 
\end{align*}
Let $\Theta^{\pm}$ be the functor defined by
\begin{align*}
\Theta^{\pm} \cneq k^{\pm}_{\ast} \circ 
r^{\pm \ast}  \colon 
D_{\mathbb{C}^{\ast}}(\widehat{M}^{\pm}, 0) \to D_{\mathbb{C}^{\ast}}(B, 0). 
\end{align*}
\begin{lem}\label{lem:commute1}
The following diagram is commutative: 
\begin{align}\label{dia:commute}
\xymatrix{
D_{\mathbb{C}^{\ast}}(\widehat{M}^{\pm}, 0) \ar[r]^-{\Theta^{\pm}} 
\ar[d]_-{j^{\pm}_{\ast}\circ q^{\pm \ast}}&
D_{\mathbb{C}^{\ast}}(B, 0) 
\ar[d]^-{\widetilde{j}_{\ast} \circ \widetilde{q}^{\ast}} \\
D_{\mathbb{C}^{\ast}}(Y_{\widehat{U}}^{\pm}, w^{\pm}) \ar[r]_-{\dL p_{\widehat{U}}^{\pm \ast}} & 
D_{\mathbb{C}^{\ast}}(W_{\widehat{U}}, \widetilde{w}).
}
\end{align}
Here the vertical arrows are equivalences (\ref{equiv:01}), (\ref{equiv:02}). 
\end{lem}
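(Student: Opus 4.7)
My plan is to show that, for any $\fF \in D_{\mathbb{C}^{\ast}}(\widehat{M}^{+}, 0)$, both compositions in (\ref{dia:commute}) are naturally isomorphic to $\iota_{\ast}(r^{+}\circ \widetilde{\psi})^{\ast}\fF$, where $\iota\dit D \hookrightarrow W_{\widehat{U}}$ denotes the closed embedding and $\widetilde{\psi}\dit D \to F^{+}$ is the restriction of $\widetilde{q}$. The key geometric input is the scheme-theoretic equality
\begin{align*}
D \cneq p_{\widehat{U}}^{+,-1}(A^{+}) = \widetilde{q}^{-1}(F^{+}) \subset W_{\widehat{U}},
\end{align*}
which I verify by a direct coordinate computation using the presentations (\ref{def:YZ}) and (\ref{W:quot}): in both descriptions $D$ is cut out by the ideal generated by $\sum_{i=1}^{a}x_{i}w_{ij}(\vec{u})$ for $1\le j\le b$. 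It follows that both squares $(D, A^{+}, W_{\widehat{U}}, Y^{+}_{\widehat{U}})$ (along $p^{+}_{\widehat{U}}$ and $j^{+}$) and $(D, F^{+}, W_{\widehat{U}}, (\mathbb{P}(V^{+})\times \mathbb{P}(V^{-}))_{\widehat{U}})$ (along $\mathrm{pr}_{\widehat{U}}$ and the inclusion) are Cartesian, and that $\widetilde{\psi}$ agrees with the restriction of $\mathrm{pr}_{\widehat{U}}$ to $D$.

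\textbf{The two compositions via flat base change.} For the right-hand side, the Cartesian square defining $A^{+}$ has flat vertical map $\mathrm{pr}_{\widehat{U}}^{+}$ (a vector bundle), so flat base change gives $j_{\ast}^{+}\circ q^{+,\ast} \cong \mathrm{pr}_{\widehat{U}}^{+,\ast}\circ a_{\ast}$, where $a\dit \widehat{M}^{+}\hookrightarrow \mathbb{P}(V^{+})_{\widehat{U}}$ is the inclusion from Lemma~\ref{lem:Msmooth}. Factoring $\mathrm{pr}_{\widehat{U}}^{+}\circ p_{\widehat{U}}^{+} = \pi \circ \mathrm{pr}_{\widehat{U}}$, where $\pi$ is the first projection of $(\mathbb{P}(V^{+})\times \mathbb{P}(V^{-}))_{\widehat{U}}$, the non-flat pull-back $\dL p_{\widehat{U}}^{+,\ast}$ gets composed with the flat $\mathrm{pr}_{\widehat{U}}^{+,\ast}$, and the composite is simply the exact pull-back $\mathrm{pr}_{\widehat{U}}^{\ast}\pi^{\ast}$ along the flat map $\pi\circ\mathrm{pr}_{\widehat{U}}$. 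Two further flat base changes---first for $\pi$ along the Cartesian square with $F^{+}=\widehat{M}^{+}\times \mathbb{P}(V^{-})$, then for the line-bundle $\mathrm{pr}_{\widehat{U}}$ along the Cartesian square with $D$---rewrite the right-hand side as $\iota_{\ast}\widetilde{\psi}^{\ast}r^{+,\ast}\fF$. On the left-hand side, flatness of the line bundle $\widetilde{q}\dit E\to B$ and flat base change on the Cartesian square $(D, F^{+}, E, B)$ give $\widetilde{q}^{\ast}\circ k_{\ast}^{+} \cong (D\hookrightarrow E)_{\ast}\circ \widetilde{\psi}^{\ast}$, and pushing forward along $\widetilde{j}$ yields the same object $\iota_{\ast}(r^{+}\circ\widetilde{\psi})^{\ast}\fF$.

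\textbf{Equivariance, the $-$ case, and the main difficulty.} All squares used above are $\mathbb{C}^{\ast}$-equivariant: the bases $\mathbb{P}(V^{\pm})_{\widehat{U}}$, $\widehat{M}^{\pm}$, $B$, $F^{\pm}$ carry the trivial $\mathbb{C}^{\ast}$-action while $Y_{\widehat{U}}^{\pm}$, $W_{\widehat{U}}$, $A^{\pm}$, $E$, $D$ carry the weight-one action on the Kn\"orrer fiber coordinates; moreover $\widetilde{w}|_{D}=0$ since $\sum_{ij}x_{i}y_{j}w_{ij}(\vec{u})=\sum_{j}y_{j}(\sum_{i}x_{i}w_{ij}(\vec{u}))$ vanishes on $D$. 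Hence the flat-base-change isomorphisms lift to the $\mathbb{C}^{\ast}$-equivariant derived categories and transport, via (\ref{equiv:w=0}), to isomorphisms in $D_{\mathbb{C}^{\ast}}(-,0)$. The case $-$ is completely symmetric under interchanging $V^{\pm}$, $F^{\pm}$, $p^{\pm}_{\widehat{U}}$. The main delicate point I expect is handling the non-flat blow-up pull-back $\dL p_{\widehat{U}}^{+,\ast}$; this is neutralized by the trick above of applying it after $\mathrm{pr}_{\widehat{U}}^{+,\ast}$, so that every functor in the final chain of isomorphisms is a pull-back along a flat map and a push-forward along a closed embedding.
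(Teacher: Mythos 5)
Your proof is correct, and it takes a genuinely different route through the step that is least obvious in the paper's argument. The paper reduces both compositions to $\widetilde{i}^{\pm}_{\ast}\circ\widetilde{q}^{\pm\ast}$ (your $\iota_{\ast}(r^{+}\circ\widetilde{\psi})^{\ast}$; the intermediate scheme $\widetilde{F}^{\pm}$ of the paper is your $D$) and justifies all base changes at once by asserting that \emph{every} square appearing is a derived Cartesian. The delicate square is the one along the non-flat blow-down $p^{\pm}_{\widehat{U}}$; the paper does not verify its Tor-independence (it holds because $A^{\pm}$ is cut out in $Y^{\pm}_{\widehat{U}}$ by the pullback of the regular sequence defining $\widehat{M}^{\pm}$, and that sequence stays regular on $W_{\widehat{U}}$ by the codimension computation). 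You avoid this verification entirely: by first applying flat base change along the vector-bundle projection to convert $j^{+}_{\ast}\circ q^{+\ast}$ into $\mathrm{pr}^{+\ast}_{\widehat{U}}\circ a_{\ast}$, the troublesome $\dL p^{+\ast}_{\widehat{U}}$ only ever appears post-composed with $\mathrm{pr}^{+\ast}_{\widehat{U}}$, and the composite $\mathrm{pr}^{+}_{\widehat{U}}\circ p^{+}_{\widehat{U}}=\pi\circ\mathrm{pr}_{\widehat{U}}$ is flat (a composite of a line bundle and a projective bundle). After that every step is a flat base change plus pushforward along a closed embedding, so nothing Tor-theoretic needs checking. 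The price is the slightly longer chain of flat squares and the need to spot the factorization of $\mathrm{pr}^{+}_{\widehat{U}}\circ p^{+}_{\widehat{U}}$; the payoff is that the argument is visibly robust. Your equivariance remark and the observation that $\widetilde{w}|_{D}=0$ (so that $D$ genuinely sits in the zero fiber, matching the Kn\"orrer setup) are the right sanity checks for transporting the isomorphisms from $\mathbb{C}^{\ast}$-equivariant derived categories to the factorization categories, and the $-$ case is indeed symmetric under swapping $V^{\pm}$.
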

\begin{proof}
Let $\widetilde{F}^{\pm} \subset W_{\widehat{U}}$ be defined by
the Cartesian square
\begin{align}\notag
\xymatrix{
\widetilde{F}^{\pm} \ar@<-0.3ex>@{^{(}->}[r]^-{\widetilde{i}^{\pm}} \ar[d] 
\ar@{}[dr]|\square
& W_{\widehat{U}} \ar[d]^-{\mathrm{pr}_{\widehat{U}}} \\
F^{\pm} \ar@<-0.3ex>@{^{(}->}[r] & 
(\mathbb{P}(V^+) \times \mathbb{P}(V^-))_{\widehat{U}}.
}
\end{align}
Here the right vertical arrow is the projection. 
We have two diagrams
\begin{align}\notag
\xymatrix{
\widetilde{F}^{\pm} \ar@<-0.3ex>@{^{(}->}[r]^-{\widetilde{i}^{\pm}}
\ar@{}[dr]|\square
\ar[d] 
\ar@/_15pt/[dd]_-{\widetilde{q}^{\pm}} & W_{\widehat{U}} 
\ar[d]^-{p_{\widehat{U}}^{\pm}} \\
A^{\pm} \ar@<-0.3ex>@{^{(}->}[r]^-{j^{\pm}} \ar[d]^-{q^{\pm}} 
\ar@{}[dr]|\square
& Y_{\widehat{U}}^{\pm} \ar[d]^-{\mathrm{pr}_{\widehat{U}}^{\pm}} \\
\widehat{M}^{\pm}  \ar@<-0.3ex>@{^{(}->}[r] & \mathbb{P}(V^{\pm})_{\widehat{U}},
}
\quad
\xymatrix{
\widetilde{F}^{\pm} \ar@/_15pt/[dd]_-{\widetilde{q}^{\pm}}
\ar@/^15pt/[rr]^-{\widetilde{i}^{\pm}} 
 \ar@<-0.3ex>@{^{(}->}[r]  \ar[d] 
\ar@{}[dr]|\square &
E \ar@<-0.3ex>@{^{(}->}[r]^-{\widetilde{j}}  \ar[d]^-{\widetilde{q}}
\ar@{}[dr]|\square 
& W_{\widehat{U}} \ar[d]^-{\mathrm{pr}_{\widehat{U}}} \\
F^{\pm} \ar@<-0.3ex>@{^{(}->}[r]^-{k^{\pm}} \ar[d]^-{r^{\pm}}
\ar@{}[drr]|\square 
 &
B \ar@<-0.3ex>@{^{(}->}[r]
& 
(\mathbb{P}(V^+) \times \mathbb{P}(V^-)_{\widehat{U}} \ar[d] \\
\widehat{M}^{\pm}  \ar@<-0.3ex>@{^{(}->}[rr] & & 
\mathbb{P}(V^{\pm})_{\widehat{U}}.
}
\end{align}
Since every Cartesians in the above diagrams 
are derived Cartesians, the base change 
shows that 
\begin{align*}
\dL p_{\widehat{U}}^{\pm \ast} \circ j_{\ast}^{\pm} \circ q^{\pm \ast}
\cong \widetilde{i}^{\pm}_{\ast} \circ 
\widetilde{q}^{\pm \ast} \cong 
\widetilde{j}_{\ast} \circ \widetilde{q}^{\ast} \circ \Theta^{\pm}.
\end{align*}
Therefore
the lemma holds. 
\end{proof}

\begin{lem}\label{lem:commute2}
The following diagram is commutative
\begin{align}\label{dia:commute2}
\xymatrix{
D_{\mathbb{C}^{\ast}}(\widehat{M}^{\pm}, 0)  
\ar[d]_-{j^{\pm}_{\ast}\circ q^{\pm \ast}}&
D_{\mathbb{C}^{\ast}}(B, 0) \ar[l]_-{\Theta_R^{\pm}}
\ar[d]^-{\widetilde{j}_{\ast} \circ \widetilde{q}^{\ast}} \\
D_{\mathbb{C}^{\ast}}(Y_{\widehat{U}}^{\pm}, w^{\pm})  & 
D_{\mathbb{C}^{\ast}}(W_{\widehat{U}}, \widetilde{w}) 
\ar[l]^-{\dR p_{\widehat{U}\ast}^{\pm}}.
}
\end{align}
Here $\Theta_R^{\pm}$ are 
the right adjoint functors of $\Theta$, i.e. 
\begin{align*}
\Theta_R^{\pm} \cneq \dR r_{\ast}^{\pm} \circ (k^{\pm})^{!},
\end{align*}
where $(k^{\pm})^{!}$ are the right adjoint functors of 
$k^{\pm}_{\ast}$. They are written as
\begin{align}\label{k!}
(k^{+})^{!} &=\otimes \oO_{F^+}(b-1, -1) \circ \dL k^{+ \ast}[1-b], \\
\notag
(k^{-})^{!} &=\otimes \oO_{F^-}(-1, a-1) \circ \dL k^{- \ast}[1-a].
\end{align}
Here $\oO_{F^{\pm}}(c, d) \cneq 
\oO_{(\mathbb{P}(V^+) \times \mathbb{P}(V^-))_{\widehat{U}}}(c, d)|_{F^{\pm}}$. 
\end{lem}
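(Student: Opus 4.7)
The plan is to deduce the commutativity of (\ref{dia:commute2}) from Lemma~\ref{lem:commute1} by passing to right adjoints, and then to verify the explicit formula (\ref{k!}) for $(k^\pm)^!$ by a direct computation of the relative dualizing sheaf along the embeddings $k^\pm$.

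First I would observe that both vertical functors in (\ref{dia:commute2}) are the Kn\"orrer equivalences (\ref{equiv:01}) and (\ref{equiv:02}), hence have right adjoints equal to their quasi-inverses. The functor $\dL p^{\pm\ast}_{\widehat U}$ has $\dR p^{\pm}_{\widehat U \ast}$ as right adjoint, and $\Theta^\pm$ has $\Theta_R^\pm$ as right adjoint by definition. Applying right adjoints to the natural isomorphism of functors
\[
\dL p^{\pm\ast}_{\widehat U} \circ (j^\pm_\ast \circ q^{\pm\ast}) \simeq (\widetilde{j}_\ast \circ \widetilde{q}^\ast) \circ \Theta^\pm
\]
supplied by Lemma~\ref{lem:commute1}, and using that the vertical arrows are equivalences, immediately yields the commutativity of (\ref{dia:commute2}).

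It remains to verify formula (\ref{k!}). First I would check that $k^\pm \colon F^\pm \hookrightarrow B$ is a regular closed embedding of codimension $b-1$, resp.\ $a-1$. The ideal sheaf $I_{F^+/B}$ is generated by the $b$ elements $q_j \cneq \sum_i x_i w_{ij}(\vec u)$, which satisfy the Koszul-type relation $\sum_j y_j q_j = 0$ in $\oO_B$ coming from the defining equation of $B$. Locally at any point of $B$, some homogeneous coordinate $y_{j_0}$ of $\mathbb{P}(V^-)$ is invertible, so $q_{j_0}$ can be expressed in terms of the remaining $q_j$, and the ideal is locally generated by the regular sequence $(q_j)_{j \ne j_0}$ of length $b-1$; the analogous statement holds for $F^-$. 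The general formula for the twisted inverse image along a local complete intersection closed embedding then yields $(k^\pm)^!(-) \cong \dL k^{\pm\ast}(-) \otimes \omega_{F^\pm/B}[1-b]$ (resp.\ $[1-a]$), with $\omega_{F^\pm/B} = \det \mathcal{N}_{F^\pm/B}$.

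Finally, the line bundles $\det \mathcal{N}_{F^\pm/B}$ are determined by the conormal sequence
\[
0 \to \mathcal{N}_{F^\pm/B} \to \mathcal{N}_{F^\pm/\mathrm{amb}} \to \mathcal{N}_{B/\mathrm{amb}}|_{F^\pm} \to 0
\]
associated to the inclusions $F^\pm \subset B \subset (\mathbb{P}(V^+) \times \mathbb{P}(V^-))_{\widehat U}$. Since $F^+$ (resp.\ $F^-$) is cut out in the ambient space by $b$ sections of $\oO(1,0)$ (resp.\ $a$ sections of $\oO(0,1)$) and $B$ is cut out by one section of $\oO(1,1)$, one has $\det \mathcal{N}_{F^+/\mathrm{amb}} = \oO_{F^+}(b,0)$, $\det \mathcal{N}_{F^-/\mathrm{amb}} = \oO_{F^-}(0,a)$ and $\det \mathcal{N}_{B/\mathrm{amb}}|_{F^\pm} = \oO_{F^\pm}(1,1)$. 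Taking determinants in the short exact sequence above gives $\det \mathcal{N}_{F^+/B} = \oO_{F^+}(b-1,-1)$ and $\det \mathcal{N}_{F^-/B} = \oO_{F^-}(-1,a-1)$, which yields (\ref{k!}). The only subtle point in the argument is verifying the local complete intersection property of $F^\pm \subset B$, which rests on the local elimination of one redundant generator of the ideal using the non-vanishing of $\vec y$ on $\mathbb{P}(V^-)$ (and of $\vec x$ on $\mathbb{P}(V^+)$).
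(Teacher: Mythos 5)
Your proof is correct and follows essentially the same route as the paper: the commutativity is obtained by passing to right adjoints in Lemma~\ref{lem:commute1}, and the formula (\ref{k!}) is derived from Grothendieck duality for a closed embedding together with the normal bundle exact sequence and the dimension count (\ref{dim:form}). Your computation $\det N_{F^-/B}=\oO_{F^-}(-1,a-1)$ is the correct one (the paper's proof text contains a transposition typo here, though the statement of the lemma has it right), and your explicit verification that $F^\pm\hookrightarrow B$ is a regular embedding is a small point the paper leaves implicit.
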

\begin{proof}
The commutativity of (\ref{dia:commute2}) follows 
from that of (\ref{dia:commute}) together with the 
fact that $\dR p_{\widehat{U}\ast}^{\pm}$, 
$\Theta^{\pm}_R$ are the right adjoint functors of 
$\dL p_{\widehat{U}}^{\pm \ast}$, $\Theta^{\pm}$ respectively. 
As for the formula for $(k^{\pm})^{!}$,
note that we have 
\begin{align*}
(k^{\pm})^{!}(-)
=\otimes \det N_{F^{\pm}/B} \circ \dL k^{\pm \ast}(-)
\circ 
[\dim F^{\pm}-\dim B].
\end{align*}
By the exact sequences
\begin{align*}
0 \to N_{F^{\pm}/B} \to 
N_{F^{\pm}/(\mathbb{P}(V^+) \times \mathbb{P}(V^-))_{\widehat{U}}}
\to N_{B/(\mathbb{P}(V^+) \times \mathbb{P}(V^-))_{\widehat{U}}} \to 0
\end{align*}
we have 
\begin{align*}
\det N_{F^{+}/B}=\oO_{F^+}(b-1, -1), \ 
\det N_{F^-/B}=\oO_{F^-}(a-1, -1).
\end{align*}
Together with the dimension computations
\begin{align}\label{dim:form}
\dim F^+=g+a-2, \ 
\dim F^-=g+b-2, \ 
\dim B=g+a+b-3
\end{align}
we obtain (\ref{k!}). 
\end{proof}

\begin{prop}\label{prop:commute2}
Suppose that 
the following condition holds: 
\begin{align}\label{dim:fprod}
\dim (\widehat{M}^{+} \times_{\widehat{U}} \widehat{M}^{-}) \le g-1.
\end{align}
Then the following diagram is commutative: 
\begin{align}\notag
\xymatrix{
D_{\mathbb{C}^{\ast}}(\widehat{M}^{-}, 0)  \ar[r]^-{\Theta_{\widehat{M}}}
\ar[d]_-{j^{-}_{\ast}\circ q^{-\ast}}&
D_{\mathbb{C}^{\ast}}(\widehat{M}^{+}, 0) 
\ar[d]^-{j^{+}_{\ast}\circ q^{+\ast}} \\
D_{\mathbb{C}^{\ast}}(Y_{\widehat{U}}^{-}, w^{\pm}) \ar[r]_-{\Psi_Y} & 
D_{\mathbb{C}^{\ast}}(Y_{\widehat{U}}^+, \widetilde{w}) .
}
\end{align}
Here the vertical arrows are equivalences (\ref{equiv:01}), 
$\Psi_Y$ is given by (\ref{def:PhiY}) and 
$\Theta_{\widehat{M}}$ is defined by
\begin{align*}
\Theta_{\widehat{M}} \cneq 
\otimes \oO_{\widehat{M}^{+}}(b-1) \circ 
\Psi^{\Xi(\oO_{\widehat{M}^{+} \times_{\widehat{U}} \widehat{M}^{-}})}
\circ \otimes \oO_{\widehat{M}^{-}}(-1)[1-b]
\end{align*}
where $\Xi$ is the equivalence in (\ref{equiv:w=0}): 
\begin{align*}
\Xi \colon D^b(\widehat{M}^+ \times_{\widehat{U}} \widehat{M}^-)
\stackrel{\sim}{\to} D_{\mathbb{C}^{\ast}}(\widehat{M}^+ \times_{\widehat{U}} \widehat{M}^-, 0).
\end{align*}
\end{prop}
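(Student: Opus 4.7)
The plan is to paste together Lemmas \ref{lem:commute1} and \ref{lem:commute2}, reducing the claimed commutativity to the identity of functors
\begin{align*}
\Theta_R^+ \circ \Theta^- \cong \Theta_{\widehat{M}} \colon D_{\mathbb{C}^{\ast}}(\widehat{M}^-, 0) \to D_{\mathbb{C}^{\ast}}(\widehat{M}^+, 0).
\end{align*}
Indeed, the $(-)$-case of Lemma \ref{lem:commute1} yields $\dL p_{\widehat{U}}^{-\ast} \circ (j^-_\ast q^{-\ast}) = (\widetilde{j}_\ast \widetilde{q}^{\ast}) \circ \Theta^-$, while the $(+)$-case of Lemma \ref{lem:commute2} yields $\dR p^+_{\widehat{U}\ast} \circ (\widetilde{j}_\ast \widetilde{q}^{\ast}) = (j^+_\ast q^{+\ast}) \circ \Theta_R^+$. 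Since $\Psi_Y = \dR p^+_{\widehat{U}\ast} \circ \dL p_{\widehat{U}}^{-\ast}$ by definition, composition then gives $\Psi_Y \circ (j^-_\ast q^{-\ast}) = (j^+_\ast q^{+\ast}) \circ (\Theta_R^+ \circ \Theta^-)$, and the proposition would follow at once from the displayed isomorphism.

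To analyse $\Theta_R^+ \circ \Theta^-$ I would first unwind the geometry. By Lemma \ref{lem:Msmooth}, the subschemes $F^{\pm}$ are the trivial projective bundles $F^+ = \widehat{M}^+ \times \mathbb{P}(V^-)$ and $F^- = \widehat{M}^- \times \mathbb{P}(V^+)$ inside $P \cneq (\mathbb{P}(V^+) \times \mathbb{P}(V^-))_{\widehat{U}}$, and a direct inspection of the defining equations shows $F^+ \cap F^- = \widehat{M}^+ \times_{\widehat{U}} \widehat{M}^-$, which I denote by $N$. Writing $\alpha \colon N \hookrightarrow F^+$ and $\beta \colon N \hookrightarrow F^-$ for the inclusions, the compositions $r^+ \circ \alpha$ and $r^- \circ \beta$ are the two projections $\pi^{\pm}_N \colon N \to \widehat{M}^{\pm}$, and the line bundle $\oO_{F^+}(b-1, -1)$ restricts along $\alpha$ to $\pi_N^{+\ast}\oO_{\widehat{M}^+}(b-1) \otimes \pi_N^{-\ast}\oO_{\widehat{M}^-}(-1)$.

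The hypothesis \eqref{dim:fprod} is precisely the condition that $F^+ \cap F^-$ is a proper intersection inside $B$: using \eqref{dim:form}, the expected codimension of $N$ in $B$ equals $(b-1) + (a-1) = a+b-2 = \dim B - (g-1)$, so combined with $\dim N \le g-1$ one obtains equality and hence the intersection is of the expected codimension. Together with the smoothness of $F^{\pm}$, this forces Tor-independence of the fibre square $N = F^+ \times_B F^-$, and derived base change yields $\dL k^{+\ast} \circ k^-_\ast \cong \alpha_\ast \circ \dL \beta^{\ast}$. Substituting into the formula \eqref{k!} for $(k^+)^!$, applying the projection formula, and using the identifications above together with the compatibility \eqref{commute:Xi} between $\Phi^{\oO_N}$ and $\Psi^{\Xi(\oO_N)}$, one obtains $\Theta_R^+ \circ \Theta^- \cong \Theta_{\widehat{M}}$ by direct comparison with the definition of $\Theta_{\widehat{M}}$ given in the statement. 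The main technical obstacle is the Tor-independence step: since $B$ is a singular hypersurface in $P$, the embeddings $F^{\pm} \hookrightarrow B$ need not be regular, so one cannot invoke the standard complete-intersection criterion directly. I expect the cleanest route is to compute $\mathcal{T}or^{\oO_B}_{>0}(\oO_{F^+}, \oO_{F^-})$ using the two-term Koszul resolution of $\oO_B$ inside the smooth ambient $P$ together with a dimension-theoretic vanishing argument fed by \eqref{dim:fprod}.
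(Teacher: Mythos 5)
Your plan reproduces the paper's proof exactly: reduce via Lemmas \ref{lem:commute1} and \ref{lem:commute2} to the isomorphism $\Theta_R^+ \circ \Theta^- \cong \Theta_{\widehat{M}}$, identify $F^+ \cap F^- = \widehat{M}^+ \times_{\widehat{U}} \widehat{M}^-$, then use the dimension count forced by (\ref{dim:fprod}) to conclude that the square (\ref{cart:F}) is derived Cartesian and apply base change plus the projection formula to the explicit form of $(k^+)^!$ from (\ref{k!}).

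The one place you flag as a technical obstacle is actually less delicate than you fear, and you do not need a Koszul resolution of $\oO_B$ over $\oO_P$: the embedding $F^+ \hookrightarrow B$ \emph{is} regular. Indeed $F^+$ is cut out in $P$ by $g_j = \sum_i x_i w_{ij}$, $j=1,\ldots,b$, and on $B$ these satisfy $\sum_j y_j g_j = 0$; since at every point of $\mathbb{P}(V^-)$ some $y_j$ is a unit, the ideal of $F^+$ in $\oO_B$ is locally generated by $b-1$ of the $g_j$'s, matching the codimension $b-1$, so the $g_j$'s form a regular sequence there. With $F^+ \hookrightarrow B$ a regular embedding of codimension $b-1$, $F^-$ smooth (hence Cohen--Macaulay), and $\dim(F^+ \cap F^-) = g-1 = \dim F^- - (b-1)$, the standard criterion gives Tor-independence directly, which is what the paper implicitly invokes.
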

\begin{proof}
By Lemma~\ref{lem:commute1} and Lemma~\ref{lem:commute2}, 
it is enough to check that 
$\Theta_R^{+} \circ \Theta^-$
is isomorphic to $\Theta_{\widehat{M}}$. 
By setting $\oO_B(c, d) 
\cneq \oO_{(\mathbb{P}(V^+) \times \mathbb{P}(V^-))_{\widehat{U}}}
(c, d)|_{B}$, 
and using the formula (\ref{k!}), 
we have 
\begin{align*}
\Theta_R^+ \circ \Theta^-(-) &=
\dR r^+_{\ast} \left(\dL k^{+\ast}k^{-}_{\ast}r^{-\ast}(-) \otimes 
\oO_{F^+}(b-1, -1) \right)[1-b] \\
&=\dR r^+_{\ast} \dL k^{+\ast}\left(k^{-}_{\ast}r^{-\ast}(-) \otimes 
\oO_{B}(0, -1) \right)\otimes \oO_{\widehat{M}^+}(b-1)[1-b] \\
&=\dR r^+_{\ast} \dL k^{+\ast}k^{-}_{\ast} r^{-\ast}\left(- \otimes
\oO_{\widehat{M}^-}(-1)\right) \otimes \oO_{\widehat{M}^+}(b-1)[1-b].
\end{align*}
Let us consider the 
composition $\dL k^{+\ast} \circ k_{\ast}^-$ in the above formula. 
We have the following Cartesian 
diagram
\begin{align}\label{cart:F}
\xymatrix{
F^+ \cap F^- \ar@<-0.3ex>@{^{(}->}[r]^-{\theta^+} \ar@<-0.3ex>@{^{(}->}[d]_-{\theta^-} \ar@{}[dr]|\square & F^+  \ar@<-0.3ex>@{^{(}->}[d]^-{k^+} \\
F^-  \ar@<-0.3ex>@{^{(}->}[r]^-{k^-} & B.
}
\end{align}
By the definition of $F^{\pm}$, we have 
\begin{align}\label{F:intersect}
F^+ \cap F^- =\widehat{M}^{+} \times_{\widehat{U}} \widehat{M}^{-}.
\end{align}
Since $F^+ \subset B$ is of codimension 
$b-1$ and $F^- \subset B$ is of codimension 
$a-1$, we have 
\begin{align*}
\dim (F^+ \cap F^-) \ge \dim B-(b-1)-(a-1)=g-1. 
\end{align*}
Then the assumption of the dimension of the fiber product (\ref{dim:fprod})
implies that 
$\dim (F^+ \cap F^-)=g-1$ and 
the diagram (\ref{cart:F}) is a derived Cartesian. 
Therefore by the base change, we have 
\begin{align*}
\dL k^{+\ast} \circ k_{\ast}^- \cong \theta^{+}_{\ast} 
\circ \dL \theta^{-\ast}. 
\end{align*}
By substituting into the above formula for 
$\Theta_R^+ \circ \Theta^-$, and again noting 
(\ref{F:intersect}),  
we have
\begin{align*}
\Theta_R^+ \circ \Theta^-(-)
&=\dR r^+_{\ast} \theta^{+}_{\ast}
\dL \theta^{-\ast}
r^{-\ast}\left(- \otimes
\oO_{\widehat{M}^-}(-1)\right) \otimes \oO_{\widehat{M}^+}(b-1)[1-b] \\
&=\Psi^{\Xi(\oO_{\widehat{M}^{+} \times_{\widehat{U}} \widehat{M}^{-}})}
\left(- \otimes
\oO_{\widehat{M}^-}(-1)\right)\otimes \oO_{\widehat{M}^+}(b-1)[1-b] \\
&=\Theta_{\widehat{M}}(-).
\end{align*}
Therefore the proposition holds. 
\end{proof}

\begin{lem}\label{lem:commute3}
The following diagram is commutative
\begin{align}\label{dia:commute3}
\xymatrix{
D_{\mathbb{C}^{\ast}}(\widehat{U}, 0)
\ar[r]^-{\overline{\Upsilon}_{\widehat{M}}^{i+b}[-b]} \ar@{=}[d] & D_{\mathbb{C}^{\ast}}(\widehat{M}^{+}, 0) 
\ar[d]^-{j^{+}_{\ast}\circ q^{+\ast}} \\
D_{\mathbb{C}^{\ast}}(\widehat{U}, 0) \ar[r]_-{\Upsilon_Y^i} & 
D_{\mathbb{C}^{\ast}}(Y_{\widehat{U}}^+, w^+).
}
\end{align}
Here $\Upsilon_Y^i$ is defined in Theorem~\ref{thm:deq} (iii), 
and $\overline{\Upsilon}_{\widehat{M}}^i$ is defined by
\begin{align}\label{UMi}
\overline{\Upsilon}_{\widehat{M}}^i 
\cneq \otimes \oO_{\widehat{M}^{+}}(i) \circ \dL \pi^{+\ast} \colon 
D_{\mathbb{C}^{\ast}}(\widehat{U}, 0) \to 
D_{\mathbb{C}^{\ast}}(\widehat{M}^{+}, 0). 
\end{align}
\end{lem}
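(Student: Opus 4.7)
The plan is to reduce the stated commutativity to a single universal isomorphism in $D_{\mathbb{C}^{\ast}}(Y_{\widehat{U}}^+, w^+)$ comparing two Koszul-type resolutions that become identified via the curvature $w^+$.

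First I would peel off the line bundle twist $(-)\otimes\oO(i)$ from both sides of (\ref{dia:commute3}) using the projection formula via the identities $\oO_{A^+}(i)=j^{+\ast}\oO_{Y_{\widehat{U}}^+}(i)$ and $\oO_{\mathbb{P}(V^+)_{\widehat{U}}}(i)=i^{+\ast}_{\widehat{U}}\oO_{Y_{\widehat{U}}^+}(i)$. Since $\oO_{Y_{\widehat{U}}^+}(i)$ is pulled back from the trivially acted upon base $\mathbb{P}(V^+)_{\widehat{U}}$ and has trivial $\mathbb{C}^\ast$-weight, this reduces the problem to the case $i=0$. Next, the square with vertices $A^+, Y_{\widehat{U}}^+, \widehat{M}^+, \mathbb{P}(V^+)_{\widehat{U}}$ (horizontal arrows $j^+, \iota^+$; vertical arrows $q^+, \mathrm{pr}^+_{\widehat{U}}$) is Cartesian with $\mathrm{pr}^+_{\widehat{U}}$ flat, so flat base change yields $j^+_{\ast}q^{+\ast}\cong \mathrm{pr}^{+\ast}_{\widehat{U}}\iota^+_{\ast}$. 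Combining this with the factorization $\pi^+=g\circ \iota^+$ and two projection-formula applications (one on $\mathbb{P}(V^+)_{\widehat{U}}$ to free $g^{\ast}(-)$ from $\iota^+_{\ast}$, and one on $Y_{\widehat{U}}^+$ for the right-hand side), the entire claim collapses to the single universal isomorphism
\begin{align*}
\mathrm{pr}^{+\ast}_{\widehat{U}}\bigl(\iota^+_{\ast}\oO_{\widehat{M}^+}(b)\bigr)[-b]\;\cong\; i^+_{\widehat{U}\ast}\oO_{\mathbb{P}(V^+)_{\widehat{U}}}\qquad \text{in }D_{\mathbb{C}^{\ast}}(Y_{\widehat{U}}^+,w^+),
\end{align*}
independent of the input object.

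To prove this core isomorphism I would resolve both sides by explicit Koszul complexes. The left-hand side is resolved, after pullback along $\mathrm{pr}^+_{\widehat{U}}$, by the Koszul complex associated with the regular sequence $\sigma_j=\sum_i x_iw_{ij}(\vec u)\in\Gamma(\oO(1))$ cutting out $\widehat{M}^+\subset\mathbb{P}(V^+)_{\widehat{U}}$, subsequently twisted by $\oO(b)$ and shifted by $[-b]$. The right-hand side is resolved by the Koszul complex associated with the regular sequence $y_j$ cutting out the zero section $\mathbb{P}(V^+)_{\widehat{U}}\subset Y_{\widehat{U}}^+$. Although these two complexes have different cohomological supports in $D^b(Y_{\widehat{U}}^+)$, the identity $w^+=\sum_j y_j\sigma_j$ unites their differentials into the stable Koszul matrix factorization of $w^+$, which represents both sides in the derived factorization category. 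Equivalently, this is the statement that under the Kn\"orrer equivalence $j^+_{\ast}q^{+\ast}$ of Theorem~\ref{thm:knoer}, the image of $\oO_{\widehat{M}^+}(b)[-b]\in D_{\mathbb{C}^{\ast}}(\widehat{M}^+,0)$ is canonically identified with $i^+_{\widehat{U}\ast}\oO_{\mathbb{P}(V^+)_{\widehat{U}}}$.

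The main obstacle will be this final identification inside the factorization category, since it requires carefully tracking $\mathbb{C}^{\ast}$-weights, a trivial twist by $\det V^-$ arising from the natural pairing $\wedge^{b-\ell}V^-\otimes\wedge^{\ell}V^-\to\det V^-$ used to match the graded pieces of the two Koszul complexes, and the sign conventions built into the Kn\"orrer equivalence. All other steps are formal applications of flat base change and the projection formula.
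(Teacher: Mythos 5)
Your reduction is valid and, after unwinding, leads to the same commutativity as the paper's Lemma~\ref{lem:commute3}, but by a genuinely different route. The paper exploits the fact that the inverse of the Kn\"orrer equivalence $j^+_{\ast}\circ q^{+\ast}$ is its right adjoint $\dR q^+_{\ast}\circ j^{+!}$, and verifies the equivalent identity $\dR q^+_{\ast}\circ j^{+!}\circ\Upsilon^i_Y\cong\overline{\Upsilon}^{i+b}_{\widehat{M}}[-b]$ by a short chain of base-change and projection-formula identities; the entire source of the twist $\oO(b)$ and the shift $[-b]$ is the formula $j^{+!}\cong\otimes\det N_{A^+/Y^+_{\widehat{U}}}\circ\dL j^{+\ast}[-b]$ for the right adjoint of the regular immersion $j^+$, which Grothendieck duality gives you for free. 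You instead keep the forward direction, reduce to the object-level identity $\mathrm{pr}^{+\ast}_{\widehat{U}}(\iota^+_{\ast}\oO_{\widehat{M}^+}(b))[-b]\cong i^+_{\widehat{U}\ast}\oO_{\mathbb{P}(V^+)_{\widehat{U}}}$ in $D_{\mathbb{C}^{\ast}}(Y^+_{\widehat{U}},w^+)$ (a reduction that is correct: the flat base change $j^+_{\ast}q^{+\ast}\cong\mathrm{pr}^{+\ast}_{\widehat{U}}\iota^+_{\ast}$ and the projection formulas go through, noting that these functors land in objects supported on $A^+$, hence annihilated by $w^+$), and then propose to establish that identity by comparing the Koszul factorizations of $w^+=\sum_j y_j\sigma_j$ arising from the two regular sections $s=(\sigma_j)$ and $t=(y_j)$. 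This is a sound strategy; the pairing $\wedge^\ell\vV\otimes\wedge^{b-\ell}\vV\to\det\vV$ relating $(\wedge^\bullet\vV,\iota_t+s\wedge)$ and $(\wedge^\bullet\vV^\vee,\iota_s+t\wedge)$ does produce exactly the $\det\vV\cong\oO_{Y^+}(b)\otimes\det(V^-)^\vee$ twist and a parity shift, matching the $\oO(b)[-b]$ you need. The trade-off is that your core step is a nontrivial computation inside the factorization category (tracking weights, the reversal of the grading, and the identification of each Koszul factorization with the structure sheaf of the corresponding zero locus), precisely what you flag as the remaining obstacle, whereas the paper sidesteps all of that by running the argument backwards through the adjoint and letting Grothendieck duality carry the determinant and shift. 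Both methods land on the same normal-bundle datum; the paper's is just the more economical packaging.
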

\begin{proof}
An inverse of the equivalence of the right vertical 
arrow in (\ref{dia:commute3}) is given by
$\dR q^+_{\ast} \circ j^{+!}$. 
Therefore it is enough to check 
that 
\begin{align}\label{e:check}
\dR q^+_{\ast} \circ j^{+!} \circ \otimes \oO_{Y_{\widehat{U}}^+}(i) \circ
i_{\widehat{U}\ast}^+ \circ g^{\ast}
\cong \overline{\Upsilon}_{\widehat{M}}^{i+b}[-b].
\end{align}
We use the following commutative 
diagram
\begin{align*}
\xymatrix{
& \ar[ld]_-{\pi^+} 
\widehat{M}^{+} \ar@<-0.3ex>@{^{(}->}[d]_-{\widetilde{j}^+}
\ar@<-0.3ex>@{^{(}->}[r]^-{\widetilde{i}^+} \ar@{}[dr]|\square 
\ar@/^15pt/[rr]^-{\id} 
& 
A^+ \ar@<-0.3ex>@{^{(}->}[d]_-{j^+}
\ar[r]^-{q^+} \ar@{}[dr]|\square
 & \widehat{M}^{+} \ar@<-0.3ex>@{^{(}->}[d]_-{\widetilde{j}^+} 
 \\
\widehat{U} & \ar[l]^-{g} 
\mathbb{P}(V^+)_{\widehat{U}} \ar@<-0.3ex>@{^{(}->}[r]_-{i_{\widehat{U}}^+} &
Y_{\widehat{U}}^+ \ar[r]_-{\mathrm{pr}^+_{\widehat{U}}} 
& \mathbb{P}(V^+)_{\widehat{U}}.
}
\end{align*}
Since the left Cartesian in the above diagram 
is a derived Cartesian, 
by base change 
we have 
$\dL j^{+\ast} \circ i_{\widehat{U}\ast}^+ \cong
\widetilde{i}^+_{\ast}
\circ \dL \widetilde{j}^{+\ast} $.
Together with $\oO_{Y_{\widehat{U}}^+}(1)|_{A^+}=q^{+\ast}\oO_{\widehat{M}^{+}}(1)$, 
we have  
\begin{align*}
&\dR q^+_{\ast} \circ j^{+!} \circ \otimes \oO_{Y_{\widehat{U}}^+}(i) 
\circ i_{\widehat{U}\ast}^+ \circ g^{\ast} \\
&\cong 
 \dR q^+_{\ast} \circ
\otimes \oO_{Y_{\widehat{U}}^+}(b)|_{A^+} \circ 
\dL j^{+\ast} \circ \otimes \oO_{Y_{\widehat{U}}^+}(i) \circ 
i_{\widehat{U}\ast}^+ \circ g^{\ast}
[-b] \\
&\cong 
\otimes \oO_{\widehat{M}^{+}}(b+i) \circ 
 \dR q^+_{\ast} \circ
\dL j^{+\ast}  \circ 
i_{\widehat{U}\ast}^+ \circ g^{\ast}
[-b] \\
&\cong \otimes \oO_{\widehat{M}^{+}}(b+i) \circ \dR q^+_{\ast} \circ \widetilde{i}^+_{\ast}
\circ \dL \widetilde{j}^{+\ast} \circ g^{\ast}[-b] \\
&\cong \otimes \oO_{\widehat{M}^{+}}(b+i) \circ \dL \pi^{+\ast}[-b]
\end{align*}
as expected. 
\end{proof}

By putting all the arguments in this subsection together, 
we have the following: 
\begin{prop}\label{cor:sod}
In the setting of Subsection~\ref{subsec:crit}, 
suppose that the assumptions of Lemma~\ref{lem:Msmooth}
and the dimension condition (\ref{dim:fprod}) hold. 
Then we have the following: 

(i) The functor
\begin{align*}
\Phi_{\widehat{M}} \cneq \Phi^{\oO_{\widehat{M}^{-} \times_{\widehat{U}} \widehat{M}^{+}}} \colon 
D^b(\widehat{M}^{-}) \to D^b(\widehat{M}^{+})
\end{align*}
is fully-faithful. 

(ii) If $n\ge 1$, the functor
\begin{align*}
\Upsilon_{\widehat{M}}^i 
\cneq 
\otimes \oO_{\widehat{M}^+}(i) \circ \dL \pi^{+\ast}
 \colon 
D^b(\widehat{U}) \to D^b(\widehat{M}^+)
\end{align*}
is fully-faithful. 

(iii) We have the SOD
\begin{align*}
D^b(\widehat{M}^{+})=\langle \Imm \Upsilon_{\widehat{M}}^{-n+1}, \ldots, 
\Imm \Upsilon_{\widehat{M}}^0, \Imm \Phi_{\widehat{M}}   \rangle. 
\end{align*}
\end{prop}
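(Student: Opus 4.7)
The plan is to transport the SOD of Theorem~\ref{thm:deq} for $D_{\mathbb{C}^{\ast}}(Y_{\widehat{U}}^+, w^+)$ across the Kn\"orrer periodicity equivalence $j^{+}_{\ast} \circ q^{+\ast} \circ \Xi \colon D^b(\widehat{M}^+) \stackrel{\sim}{\to} D_{\mathbb{C}^{\ast}}(Y_{\widehat{U}}^+, w^+)$ (Theorem~\ref{thm:knoer}, composed with the equivalence (\ref{equiv:w=0})), identify the resulting subcategories as images of $\Phi_{\widehat{M}}$ and $\Upsilon_{\widehat{M}}^i$ up to a twist by $\oO_{\widehat{M}^+}(b-1)$, and then absorb this twist by applying a global autoequivalence of $D^b(\widehat{M}^+)$ to align the indexing.

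Concretely, under this Kn\"orrer equivalence and the analogous one for $\widehat{M}^-$, Lemma~\ref{lem:commute3} identifies each $\Upsilon_Y^i$ with $\Upsilon_{\widehat{M}}^{i+b}$ up to a cohomological shift, while Proposition~\ref{prop:commute2} together with (\ref{commute:Xi}) identifies $\Psi_Y$ with the composite $(\otimes \oO_{\widehat{M}^+}(b-1)) \circ \Phi_{\widehat{M}} \circ (\otimes \oO_{\widehat{M}^-}(-1))[1-b]$. Since fully-faithfulness is preserved by equivalences, by pre- and post-composition with autoequivalences such as $\otimes \oO_{\widehat{M}^{\pm}}(\ast)$, and by cohomological shifts, Theorem~\ref{thm:deq}(i),(ii) immediately yield statements (i) and (ii) of the proposition (the $n \ge 1$ condition is inherited from Theorem~\ref{thm:deq}(ii)).

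For (iii), transporting the SOD of Theorem~\ref{thm:deq}(iii) across the Kn\"orrer equivalence gives
\begin{align*}
D^b(\widehat{M}^+) = \langle \Imm \Upsilon_{\widehat{M}}^{-n+b}, \ldots, \Imm \Upsilon_{\widehat{M}}^{b-1}, \Imm \left( (\otimes \oO_{\widehat{M}^+}(b-1)) \circ \Phi_{\widehat{M}} \right) \rangle.
\end{align*}
Applying the autoequivalence $\otimes \oO_{\widehat{M}^+}(1-b)$ of $D^b(\widehat{M}^+)$ to every component of this SOD sends $\Imm \Upsilon_{\widehat{M}}^{j}$ to $\Imm \Upsilon_{\widehat{M}}^{j+1-b}$ (by the definition of $\Upsilon_{\widehat{M}}^{\bullet}$) and cancels the twist in the last component, producing the SOD claimed in the proposition.

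The substantive input has already been absorbed into Lemmas~\ref{lem:commute1}--\ref{lem:commute3} and Proposition~\ref{prop:commute2}; once these compatibilities are available, the proof is essentially bookkeeping. The most delicate point is checking that the twist $\oO_{\widehat{M}^+}(b-1)$ produced by Proposition~\ref{prop:commute2} conspires with the shift $i \mapsto i+b$ from Lemma~\ref{lem:commute3} and the range of indices in Theorem~\ref{thm:deq} to yield precisely the indexing $\{-n+1, \ldots, 0\}$ appearing in the final SOD.
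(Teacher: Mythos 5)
Your proposal is correct and follows essentially the same route as the paper's own proof: transport Theorem~\ref{thm:deq} through the Kn\"orrer equivalences using Proposition~\ref{prop:commute2}, Lemma~\ref{lem:commute3}, and the diagram (\ref{commute:Xi}), arrive at the intermediate SOD with indices $\{b-n,\ldots,b-1\}$ and the last component twisted by $\oO_{\widehat{M}^{+}}(b-1)$, then tensor by $\oO_{\widehat{M}^{+}}(1-b)$. The bookkeeping of indices and twists matches the paper's calculation exactly.
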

\begin{proof}
By Theorem~\ref{thm:deq} (i) and
Proposition~\ref{prop:commute2}, 
the functor
\begin{align*}
\Psi^{\Xi(\oO_{\widehat{M}^{-} \times_{\widehat{U}} \widehat{M}^{+}})} \colon 
D_{\mathbb{C}^{\ast}}(\widehat{M}^{-}, 0) \to 
D_{\mathbb{C}^{\ast}}(\widehat{M}^{+}, 0)
\end{align*}
is fully-faithful. 
Therefore 
(i) follows 
by the commutative diagram (\ref{commute:Xi}). 
Similarly, 
(ii) follows from Theorem~\ref{thm:deq} (ii), 
Lemma~\ref{lem:commute3}
and the commutative diagram (\ref{commute:Xi}). 
As for (iii), 
by Theorem~\ref{thm:deq} (iii), Proposition~\ref{prop:commute2}
and Lemma~\ref{lem:commute3} we have the SOD
\begin{align*}
D^b(\widehat{M}^{+})=
\langle \Imm \Upsilon_{\widehat{M}}^{b-n}, \ldots, 
\Imm \Upsilon_{\widehat{M}}^{b-1}, 
\otimes \oO_{\widehat{M}^{+}}(b-1) \circ \Imm 
\Phi_{\widehat{M}}
 \rangle. 
\end{align*}
By tensoring $\oO_{\widehat{M}^{+}}(1-b)$, we obtain the desired SOD. 
\end{proof}

\section{SOD via d-critical simple flips}\label{sec:dcrit}
In this section, we show that for 
a d-critical simple flip
\begin{align*}
M^+ \to U \leftarrow M^-
\end{align*}
satisfying 
some conditions, 
we have an associated SOD of $D^b(M^+)$.
The SOD in this section is 
obtained by globalizing the 
SOD in Proposition~\ref{cor:sod}. 
\subsection{D-critical simple flips}\label{subsec:mainthm}
Let $U$ be a smooth variety with 
$g \cneq \dim U$. 
Let $(M^{\pm}, s^{\pm})$ be two 
d-critical loci, 
and suppose that we have projective morphisms 
\begin{align}\label{dia:pi}
\xymatrix{
M^+ \ar[rd]_-{\pi^+} & & M^- \ar[ld]^-{\pi^-} \\
& U. &
}
\end{align}
For each $p \in U$, we set
\begin{align}\label{complete:p}
\widehat{U}_p \cneq \Spec \widehat{\oO}_{U, p}, \ 
\widehat{M}_p^{\pm} \cneq M^{\pm} \times_U \widehat{U}_p.
\end{align}

\begin{defi}\label{defi:dsflip}
A diagram (\ref{dia:pi}) is called 
a formal d-critical simple flip if 
for any $p \in U$, 
there exist 
finite dimensional vector spaces $V^{\pm}$ 
with $\dim V^+ \ge \dim V^-$ such that, 
by setting $Y^{\pm}$, $Z$ as in (\ref{def:YZ}), 
and 
\begin{align}\label{formal:YZ}
\widehat{Z}_U \cneq \Spec \widehat{\oO}_{Z_U, (0, p)}, \ 
\widehat{Y}^{\pm}_U \cneq 
Y_U^{\pm} \times_{Z_U} \widehat{Z}_U 
\end{align}
there exist
$\widehat{w} \in \oO_{\widehat{Z}_U}$ and 
a commutative diagram
\begin{align}\label{relchart2}
\xymatrix{
\widehat{M}_p^{\pm} \ar[d]_-{\pi^{\pm}}
 \ar@<-0.3ex>@{^{(}->}[r]^-{\iota^{\pm}} 
& \widehat{Y}_U^{\pm} \ar[d]_-{\widehat{f}_U^{\pm}} 
\ar[rd]^-{\widehat{w}^{\pm}} \\
\widehat{U}_p \ar@<-0.3ex>@{^{(}->}[r]_-{j} & \widehat{Z}_U
 \ar[r]_-{\widehat{w}} & \mathbb{A}^1
}
\end{align}
where horizontal arrows are closed immersions,
$\widehat{w}^{\pm}$ are defined by the above 
commutative diagram, 
$j$ sends $p$ to $(0, p)$ 
and $\iota^{\pm}$ induce the isomorphisms of 
d-critical loci
\begin{align}\label{isom:iota}
\iota^{\pm} \colon 
\widehat{M}_p^{\pm} \stackrel{\cong}{\to}
\{d\widehat{w}^{\pm}=0\} \subset \widehat{Y}_U^{\pm}.
 \end{align}
\end{defi}
For a formal d-critical simple flip (\ref{dia:pi})
and $p \in U$, let $V^{\pm}$ be vector spaces as in 
Definition~\ref{defi:dsflip}. 
Below 
we use the notation in Subsection~\ref{subsec:flip}, 
e.g. $a=\dim V^+$, $b=\dim V^-$, $n \cneq a-b \ge 0$, 
the
coordinates 
$\vec{x}, \vec{y}$ of $V^+$, $V^-$, etc. 
Note that $(a, b)$ may depend on a choice of $p \in U$. 
We assume the following on the diagram (\ref{dia:pi}):

\begin{assum}\label{assum:cond}
(i) The diagram (\ref{dia:pi})
is a formal d-critical simple flip. 

(ii) 
For any $p \in U$, 
the formal function 
$\widehat{w}$ in (\ref{relchart2}) is of the form 
\begin{align}\label{widew}
\widehat{w} 
=\sum_{i, j}x_i y_j w_{ij}^{(1)}(\vec{u})
+\sum_{i, i', j, j'}x_i x_{i'} y_j y_{j'} w_{i i' j j'}^{(2)}(\vec{u}) +\cdots
\end{align}
for some $w_{\ast}^{(k)}(\vec{u}) \in \widehat{\oO}_{U, p}$, and 
$w_{ij}^{(1)}(\vec{u})$ is written as 
\begin{align}\label{wij(1)}
w_{ij}^{(1)}(\vec{u})=\sum_{k=1}^g a_{ijk} u_k +
(\mbox{higher order terms in }\vec{u})
\end{align}
for some  $a_{ijk} \in \mathbb{C}$.
Moreover the bilinear map
\begin{align}\label{psi}
\psi \colon 
\mathbb{C}^a \otimes \mathbb{C}^b \to \mathbb{C}^g, \ 
\psi(\vec{\alpha}, \vec{\beta})=
\left(\sum_{i, j}a_{ijk}\alpha_i \beta_j \right)_{1\le k\le g}
\end{align}
is injective on each factor, i.e 
for any non-zero $\vec{\alpha} \in \mathbb{C}^a$ 
and $\vec{\beta} \in\mathbb{C}^b$, 
the maps
$\psi(\vec{\alpha}, -)$, $\psi(-, \vec{\beta})$ are injective 
maps $\mathbb{C}^b \to \mathbb{C}^g$, 
$\mathbb{C}^a \to \mathbb{C}^g$. 

(iii) There exists a $\pi^{+}$-ample line bundle
$\oO_{M^+}(1)$ on $M^+$ 
such that
under the isomorphism (\ref{isom:iota}), we 
have an isomorphism 
of line bundles 
\begin{align}\label{isom:line}
(\iota^{+})^{\ast}\oO_{\widehat{Y}_U^{+}}(1) \cong \oO_{M^{+}}(1)|_{\widehat{M}_p^+}.
\end{align}
\end{assum}

\begin{lem}\label{lem:smooth}
Suppose that a diagram (\ref{dia:pi})
satisfies Assumption~\ref{assum:cond} (i), (ii). 
Then for any $p \in U$, the critical 
loci $\{d \widehat{w}^{\pm}=0\} \subset \widehat{Y}_U^{\pm}$
in the diagram (\ref{relchart2})
are written as 
\begin{align}\label{write:M2}
&\{d\widehat{w}^+=0\}=\left\{(\vec{x}, \vec{u}) 
\in \mathbb{P}(V^{+})_{\widehat{U}_p} : 
\sum_{i=1}^a x_i w_{ij}^{(1)}(\vec{u})=0
\mbox{ for all }
1\le j\le b\right\}, \\
\notag
&\{d\widehat{w}^-=0\}
=\left\{(\vec{y}, \vec{u}) \in \mathbb{P}(V^{-})_{\widehat{U}_p} : 
\sum_{j=1}^b y_j w_{ij}^{(1)}(\vec{u})=0 \mbox{ for all }
1\le i\le a\right\}.
\end{align}
Moreover 
$n =a-b \ge 0$ is independent of $p \in U$, 
$M^{\pm}$ are smooth 
and satisfy
\begin{align*}
\dim M^{\pm}=\pm n+g-1.
\end{align*}
\end{lem}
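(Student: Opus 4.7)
The plan is to establish the claims for $M^+$; the case of $M^-$ follows by the symmetric argument interchanging $V^+$ with $V^-$ and $a$ with $b$. Fix $p \in U$ and work in the affine chart $x_1 \neq 0$ of $Y_U^+$, with coordinates $\widetilde{x}_i := x_i/x_1$ for $i \geq 2$ on the base $\mathbb{P}(V^+)$ (setting $\widetilde{x}_1 := 1$) and fiber coordinates $Y_j := x_1 y_j$, so that $x_i y_j = \widetilde{x}_i Y_j$. Substituting into (\ref{widew}) expresses $\widehat{w}^+$ as a power series in $(\vec{Y}, \vec{u})$ with polynomial coefficients in $\widetilde{x}$, and differentiating gives
\[
\partial_{\widetilde{x}_i} \widehat{w}^+ \in (\vec{Y}), \quad
\partial_{Y_j} \widehat{w}^+ = F_j + O(\vec{Y}), \quad
\partial_{u_k} \widehat{w}^+ = \psi(\widetilde{x}, \vec{Y})_k + (\vec{u})(\vec{Y}) + (\vec{Y})^2,
\]
where $F_j(\widetilde{x}, \vec{u}) := \sum_i \widetilde{x}_i w_{ij}^{(1)}(\vec{u})$, and I have used (\ref{wij(1)}) to extract the leading $\psi$-term.

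The heart of the argument is to show that the critical ideal $I_{\mathrm{crit}} := (\partial \widehat{w}^+)$ equals the ideal $I_{N^+} := (\vec{Y}, F_1, \ldots, F_b)$ of the subscheme asserted in (\ref{write:M2}). The inclusion $I_{\mathrm{crit}} \subset I_{N^+}$ is immediate from the three formulae above. For the reverse, I first prove $(\vec{Y}) \subset I_{\mathrm{crit}}$. The third formula yields $\psi(\widetilde{x}, \vec{Y})_k \in I_{\mathrm{crit}} + \mathfrak{m}\cdot(\vec{Y})$ for every $k$, where $\mathfrak{m}$ denotes the maximal ideal at a closed point of the zero section $\mathbb{P}(V^+)_{\widehat{U}_p}$. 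Since $\widetilde{x}_1 = 1$ the tuple $\widetilde{x}$ is never zero, so Assumption~\ref{assum:cond}~(ii) gives injectivity of $\psi(\widetilde{x}, -) \colon \mathbb{C}^b \to \mathbb{C}^g$, meaning the $g \times b$ matrix $\bigl(\sum_i \widetilde{x}_i a_{ijk}\bigr)_{kj}$ admits a $b \times b$ minor invertible in the local ring at the chosen point. Inverting this minor expresses each $Y_j$ as a combination of the $\psi(\widetilde{x}, \vec{Y})_k$'s, yielding $(\vec{Y}) \subset I_{\mathrm{crit}} + \mathfrak{m}\cdot(\vec{Y})$, whence Nakayama's lemma forces $(\vec{Y}) \subset I_{\mathrm{crit}}$. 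Then $F_j \equiv \partial_{Y_j}\widehat{w}^+ \pmod{I_{\mathrm{crit}}}$ implies $F_j \in I_{\mathrm{crit}}$, completing $I_{N^+} = I_{\mathrm{crit}}$. Reassembling across the standard charts of $\mathbb{P}(V^+)_{\widehat{U}_p}$ identifies $\{d\widehat{w}^+ = 0\}$ with the subscheme in (\ref{write:M2}).

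For smoothness and the dimension count, I apply the Jacobian criterion to $F_1, \ldots, F_b$ on $\mathbb{P}(V^+)_{\widehat{U}_p}$. At a point $(\vec{x}_0, \vec{u}_0 = 0) \in M^+$, formula (\ref{wij(1)}) shows that the $\vec{u}$-component of $dF_j$ equals $\psi(\vec{x}_0, e_j) \in \mathbb{C}^g$, and these are linearly independent in $j = 1, \ldots, b$ because $\psi(\vec{x}_0, -)$ is injective by Assumption~\ref{assum:cond}~(ii). Since injectivity is an open condition, it persists as $\vec{u}_0$ ranges over the formal neighborhood of $0$, so the Jacobian of $(F_j)$ has rank $b$ throughout $M^+$. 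Hence $M^+$ is smooth of codimension $b$ in $\mathbb{P}(V^+)_{\widehat{U}_p}$, of dimension $(a-1)+g-b = n+g-1$, and the analogous argument gives $\dim M^- = -n+g-1$. Since $\dim M^+$ is intrinsic to $M^+$ and independent of the choice of local chart data $(V^+, V^-)$ at $p$, the integer $n$ is locally constant on $U$.

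I expect the main obstacle to be the Nakayama step: one must arrange the local algebra so that the injectivity of $\psi(\widetilde{x}, -)$ on each factor, which is the non-trivial part of Assumption~\ref{assum:cond}~(ii), forces $\vec{Y}$ into the critical ideal. The higher-order contributions from $\widehat{w}$ beyond its leading quadratic-in-$(x,y)$ layer affect only the Nakayama error term $\mathfrak{m}\cdot(\vec{Y})$, which is why the final description (\ref{write:M2}) depends only on the first-order data $(a_{ijk})$.
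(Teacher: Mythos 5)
Your proof is correct and follows essentially the same route as the paper: the $u_k$-derivative equations, the injectivity of $\psi$ on each factor, and Nakayama's lemma combine to force $\vec{y}=0$, after which the $y_j$-derivatives cut out the linear equations $F_j=0$ on the zero section, and smoothness plus the dimension count follow from the Jacobian criterion via the full rank of $\psi(\vec{x},-)$. The only difference is presentational: you carry out the Nakayama step more explicitly at the level of ideals in a fixed affine chart (identifying $I_{\mathrm{crit}} = I_{N^+}$ directly), whereas the paper phrases the same argument more tersely in the ambient coordinates on $V^{+\ast}\times V^-$.
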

\begin{proof}
For $p\in U$, let us consider the diagram 
(\ref{relchart2}). 
The subscheme $\{d\widehat{w}^+=0\} \subset \widehat{Y}_U^+$
is contained in the closed subscheme of $\widehat{Y}_U^+$
defined by the equations
\begin{align}\label{equan}
\frac{\partial \widehat{w}^+(\vec{u})}{\partial u_k}
=\sum_{i, j}x_i y_j 
\frac{\partial w_{ij}^{(1)}(\vec{u})}{\partial u_k}+
\sum_{i, i', j, j'}x_i x_{i'} y_j y_{j'}
\frac{\partial w_{ii'jj'}^{(2)}(\vec{u})}{\partial u_k}
+ \cdots =0
\end{align}
for all $1\le k\le g$. 
Note that we have 
\begin{align*}
\frac{\partial w_{ij}^{(1)}(\vec{u})}{\partial u_k}
=a_{ijk} +O(\vec{u}).
\end{align*}
Then 
by the assumption on the map (\ref{psi}),
the subscheme
\begin{align*}
\left\{ \sum_{i, j}x_i y_j 
\frac{\partial w_{ij}^{(1)}(\vec{u})}{\partial u_k}=0 : 
1\le k\le g  \right\}
\subset (V^{+\ast} \times V^-)_{\widehat{U}_p}
\end{align*} 
coincides with $V^{+\ast} \times \{0\} \times \widehat{U}_p$. 
Since the higher order terms in (\ref{equan})
have degrees bigger than or equal to two in
$\vec{y}$, 
by Nakayama lemma
we see that 
the zero locus defined by the equations (\ref{equan})
equals to 
$\vec{y}=0$ on $\widehat{Y}_{U}^+$, i.e. 
the zero section 
$\mathbb{P}(V^+)_{\widehat{U}_p} \subset \widehat{Y}_{U}^+$. 
Therefore 
$\{d\widehat{w}^+=0\} \subset \widehat{Y}_U^+$ is 
described as (\ref{write:M2}). 

Let $g_j$ for $1\le j\le b$ be the defining 
equations in the RHS of (\ref{write:M2}). 
Again the property on the map (\ref{psi}) implies that the 
Jacobian matrix 
\begin{align*}
\left( \frac{\partial g_j}{\partial x_i}, \frac{\partial g_j}{\partial u_k}
 \right)_{
1\le i\le a, 1\le j\le b, 1\le k\le g}
\end{align*}
is of maximal rank $b$
at any point in the RHS of (\ref{write:M2}). 
Therefore $\{d\widehat{w}^+=0\}$ is smooth of 
dimension $a-1+g-b=n+g-1$. 
By the isomorphism (\ref{isom:iota}), $\widehat{M}_p^+$ is smooth 
of dimension of $n+g-1$ for any $p \in U$, hence 
$M^+$ is smooth of dimension $n+g-1$. 
The claim for $M^-$ is similarly proved. 
\end{proof}

\begin{lem}\label{lem:smooth2}
Under the situation of Lemma~\ref{lem:smooth2}, we have
\begin{align}\label{M:product:g}
\dim (M^- \times_U M^+) \le g-1.
\end{align}
\end{lem}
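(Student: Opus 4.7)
The goal is to show $\dim(M^-\times_U M^+)\le g-1$. Since both projections $\pi^{\pm}$ are projective and the statement is local on $U$, it suffices to establish the formal version $\dim(\widehat{M}^+_p\times_{\widehat U_p}\widehat M^-_p)\le g-1$ at each closed point $p\in U$. Using the explicit descriptions in (\ref{write:M2}), the formal fibre product $\widehat N_p$ embeds in $(\mathbb{P}(V^+)\times\mathbb{P}(V^-))_{\widehat U_p}$ (of dimension $a+b+g-2$) as the common zero locus of the $a+b$ equations $G_j=\sum_i x_i w_{ij}^{(1)}(\vec u)$ and $H_i=\sum_j y_j w_{ij}^{(1)}(\vec u)$, which satisfy the single obvious relation $\sum_j y_j G_j=\sum_i x_i H_i$.

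Next I would stratify $\widehat U_p$ by the rank of the $a\times b$ matrix $M(\vec u):=(w_{ij}^{(1)}(\vec u))_{ij}$: let $U_r\subset\widehat U_p$ denote the stratum where $\rank M(\vec u)=r$. Over $U_r$, the fibre of $\widehat N_p$ equals $\mathbb{P}(\Ker M(\vec u)^T)\times\mathbb{P}(\Ker M(\vec u))\cong\mathbb{P}^{a-r-1}\times\mathbb{P}^{b-r-1}$, which is empty for $r\ge b$. The lemma therefore reduces to the codimension bound $\codim(U_r\subset\widehat U_p)\ge a+b-2r-1$ for $r=0,\dots,b-1$. I aim to derive this from the stronger expected-codimension estimate $\codim U_{\le r}\ge (a-r)(b-r)$, which suffices by the elementary identity $(a-r)(b-r)-(a+b-2r-1)=(a-r-1)(b-r-1)\ge0$ for $r\le b-1$.

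Two extreme cases are immediate. The case $r=b-1$ follows from the smoothness of $\widehat M^-_p$ of dimension $g-n-1$: since the restriction of $\pi^-$ to $(\pi^-)^{-1}(U_{b-1})$ is an isomorphism, one has $\dim U_{b-1}\le g-n-1$, i.e.\ codimension at least $n+1$. The case $r=0$ uses Assumption \ref{assum:cond}(ii) crucially: the injectivity of $\psi$ on each factor is equivalent to $\Ker\psi\subset\mathbb{C}^a\otimes\mathbb{C}^b$ avoiding nonzero pure tensors, so a standard projective-dimension count against the Segre variety $\mathbb{P}^{a-1}\times\mathbb{P}^{b-1}\subset\mathbb{P}^{ab-1}$ yields $\dim\Ker\psi\le(a-1)(b-1)$. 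Consequently the $ab$ linearisations of $w_{ij}^{(1)}$ span a subspace of $(\mathbb{C}^g)^*$ of dimension at least $a+b-1$, giving $\codim U_0\ge a+b-1$.

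The main obstacle is the intermediate range $0<r<b-1$. My plan is to iterate the Segre-avoidance argument via the relative Grassmannian incidences
\[
\mathcal{G}_k:=\{(V,\vec u)\in \mathrm{Gr}(k,V^-)\times\widehat U_p \,:\, V\subset \Ker M(\vec u)\},
\]
which project to $\widehat U_p$ with image $U_{\le b-k}$ and fibre $\mathrm{Gr}(k,\Ker M(\vec u))$; bounding $\dim\mathcal{G}_k$ through the $ak$ linearised equations obtained by restricting the linear part of $M(\vec u)$ to $\mathbb{C}^a\otimes V$, and combining with the smoothness of the Grassmann-bundle structure, should yield the expected codimension of $U_{\le b-k}$. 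The subtle point is that Assumption \ref{assum:cond}(ii) only forbids pure tensors in $\Ker\psi$, and nothing a priori prevents rank-$2$ tensors of the form $\vec x_0\otimes\vec c-\vec d\otimes\vec y_0$ from lying there; such tensors would cause the pointwise Zariski tangent space to $\widehat N_p$ to exceed $g-1$ at isolated singular points, so the global dimension bound must come from the stratified argument rather than a purely pointwise tangent-space computation.
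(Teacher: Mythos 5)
Your proof is incomplete: you explicitly leave the intermediate range $0 < r < b-1$ as a plan rather than a proof, and that plan is unlikely to close the gap. The difficulty you diagnose at the end — that Assumption~\ref{assum:cond}(ii) only forbids pure tensors in $\Ker\psi$ and says nothing about higher-rank tensors — is real, and it is a symptom of a more basic issue: Assumption~\ref{assum:cond}(ii) constrains the linearization $\psi$ of $\widehat w$ only at the single point $p$, whereas your target estimate $\codim U_{\le r}\ge(a-r)(b-r)$ (with $a,b$ fixed once and for all at $p$) is a global statement about the degeneracy loci of the matrix $M(\vec u)$ across the whole formal neighborhood. Nothing in the hypotheses forces the linear part of $M$ at a nearby point $\vec u_0\ne 0$ to have the expected-codimension behaviour, so you cannot hope to derive the generic-determinantal bound from (ii) alone. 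You also aim at a stronger bound than you need.

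The paper sidesteps this entirely. Rather than fixing the formal chart at $p$ and stratifying the fixed matrix $M(\vec u)$ by rank, it stratifies $U$ by fiber dimension of $\pi^-$, i.e. by the sets $U^{(k)}=\{x\in U:\dim(\pi^-)^{-1}(x)=k-1\}$, and then, for each point $q\in U^{(k)}$, invokes the formal d-critical flip structure \emph{at $q$} (Definition~\ref{defi:dsflip}) to get new local vector spaces with $\dim V^-=k$, $\dim V^+=k+n$, and a new bilinear map $\psi_q$. Equation~\eqref{write:M2} identifies $T_qU^{(k)}$ with $\Cok(\psi_q)$, and Assumption~\ref{assum:cond}(ii) (which holds at every point, not just your original $p$) combined with Hopf's lemma gives $\dim T_qU^{(k)}\le g-(k+n)-k+1$. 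This is a pointwise tangent-space bound on the \emph{base} stratum $U^{(k)}$, not on the fiber product, so it yields $\dim U^{(k)}\le g-(2k+n-1)$; adding the fiber dimensions $(a'-1)+(b'-1)=(k+n-1)+(k-1)$ then gives exactly $g-1$. Your observation at the end — that tangent spaces to $\widehat N_p$ can blow up at isolated singular points — is correct, but it is the tangent spaces to the $U^{(k)}$, computed after re-formalizing at each point, that carry the argument. The missing idea in your proposal is this re-formalization at each point of each stratum, which is what makes the Hopf-lemma bound available everywhere, not just at the origin of one chart.
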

\begin{proof}
Let us take $p \in U$, 
and vector spaces $V^{\pm}$ as in Definition~\ref{defi:dsflip}
with  
$a=\dim V^+$, $b=\dim V^-$ as before. 
For each $k\ge 0$,
let $U^{(k)} \subset U$ be the locally closed subset 
defined by
\begin{align*}
U^{(k)} \cneq \{x \in U : \dim (\pi^-)^{-1}(x)=k-1\}.
\end{align*}
Then $p \in U^{(b)}$
as $(\pi^-)^{-1}(p)=\mathbb{P}(V^-)$, and 
the descriptions of $\{d\widehat{w}^{\pm}=0\}$ in 
(\ref{write:M2}) 
and the isomorphisms (\ref{isom:iota}) show that 
\begin{align*}
U^{(b)} \cap \widehat{U}_p
=\Spec\left(\widehat{\oO}_{U, p}/(w_{ij}^{(1)}(\vec{u}) : 1\le i \le a, 1\le j\le b) \right).
\end{align*}
It follows that, by the description of $w_{ij}^{(1)}(\vec{u})$ in (\ref{wij(1)}), 
the tangent space of $U^{(b)}$ at $p$ is 
\begin{align*}
TU^{(b)}|_{p}=\left\{  (u_1, \ldots, u_g) \in \mathbb{C}^g : 
\sum_{k=1}^g a_{ijk} u_k=0 : 
1\le i\le a, 1\le j\le b \right\}. 
\end{align*}
Therefore the dimension of $TU^{(b)}|_{p}$ is given by 
the dimension of the cokernel of $\psi$ in (\ref{psi}). 
By the assumption on the map (\ref{psi}), 
the Hopf lemma (see~\cite[Lemma~2]{Ginensky})
implies that 
$\dim \Cok(\psi) \le g-a-b+1$. 
Therefore 
we have
\begin{align}\notag
\dim U^{(b)} \le g-a-b+1.
\end{align}
It follows that
\begin{align*}
&\dim \left( (\pi^+)^{-1}(U^{(b)}) \times_{U^{(b)}} (\pi^-)^{-1}(U^{(b)}) \right) \\
&\le (a-1)+(b-1)+(g-a-b+1) \\
&=g-1.
\end{align*}
Therefore the condition (\ref{M:product:g}) holds. 
\end{proof}

\subsection{SOD under d-critical simple flips}
The following is the main result in this section. 
\begin{thm}\label{thm:main}
Suppose that the diagram (\ref{dia:pi}) satisfies Assumption~\ref{assum:cond}, 
so that $M^{\pm}$ are smooth of dimension 
$\pm n+g-1$ for some $n \in \mathbb{Z}_{\ge 0}$
by Lemma~\ref{lem:smooth}. 
We have the following: 

(i) The functor
\begin{align*}
\Phi_M \cneq \Phi^{\oO_{M^- \times_U M^+}} \colon 
D^b(M^-) \to D^b(M^+)
\end{align*}
is fully-faithful. 

(ii) If $n\ge 1$, the functor
\begin{align*}
\Upsilon_M^i 
\cneq \otimes \oO_{M^+}(i) \circ \dL \pi^{+\ast} \colon 
D^b(U) \to D^b(M^+)
\end{align*}
is fully-faithful. 

(iii) We have the SOD
\begin{align}\label{sod:main}
D^b(M^+)=\langle \Imm \Upsilon_M^{-n+1}, \ldots, 
\Imm \Upsilon_M^0, \Imm \Phi_M   \rangle. 
\end{align}
\end{thm}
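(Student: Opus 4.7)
The plan is to deduce Theorem \ref{thm:main} from the complete local statement of Proposition \ref{cor:sod} by showing that fully-faithfulness, semiorthogonality, and generation can all be tested on the formal completions $\widehat{M}^{\pm}_p \to \widehat{U}_p$ for closed points $p \in U$. By Lemma \ref{lem:smooth} and Lemma \ref{lem:smooth2}, the hypotheses of Proposition \ref{cor:sod} are satisfied at every point: the dimensions of $\widehat{M}_p^{\pm}$ are $\pm n + g - 1$ with a uniform $n$, and $\dim(\widehat{M}_p^{-} \times_{\widehat{U}_p} \widehat{M}_p^{+}) \le g-1$. Thus formally locally one already has the SOD with pieces matching those in (\ref{sod:main}).

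First I would check that the global Fourier-Mukai kernels restrict to the local ones of Proposition \ref{cor:sod}. Since $\pi^{\pm}$ are projective (implicit in Definition \ref{defi:dsflip}) and $\widehat{U}_p \to U$ is flat, flat base change together with Assumption \ref{assum:cond}(iii) identifies $\oO_{M^- \times_U M^+}$ and $\oO_{M^+}(i)$ after completion with $\oO_{\widehat{M}^-_p \times_{\widehat{U}_p} \widehat{M}^+_p}$ and $\oO_{\widehat{M}^+_p}(i)$ respectively. Consequently $\Phi_M$ and $\Upsilon_M^i$ base-change to $\Phi_{\widehat{M}}$ and $\Upsilon_{\widehat{M}}^i$, and the same holds for their right adjoints (computed by relative Serre duality using projectivity of $\pi^{\pm}$).

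For parts (i) and (ii), fully-faithfulness is equivalent to the unit $\id \to F^R F$ being an isomorphism, whose cone is a bounded complex on $M^-$ or $U$. Because $\pi^+$ is projective, this cone has coherent cohomology with proper support over $U$, so it vanishes if and only if its pullback to $\widehat{U}_p$ vanishes for every closed $p$, by faithful flatness of $\prod_p \widehat{\oO}_{U,p}$ acting on coherent sheaves of proper support. Under this pullback the unit becomes the unit of the corresponding formal-local functor, which is an isomorphism by Proposition \ref{cor:sod}(i)--(ii). The semiorthogonality relations in (iii) reduce, via $\dR \pi^+_{\ast} \dR \HOM$, to vanishings of coherent sheaves on $U$ which are likewise detected formally locally and handled by Proposition \ref{cor:sod}(iii).

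The main obstacle is the generation part of (iii), and I would attack it by showing that the right orthogonal of $\mathcal{T} \cneq \langle \Imm \Upsilon_M^{-n+1}, \ldots, \Imm \Upsilon_M^0, \Imm \Phi_M \rangle$ in $D^b(M^+)$ is zero. Given $E \in \mathcal{T}^{\perp}$, one forms the iterated right mutations using the functorial projections $\Phi_M \circ \Phi_M^R$ and $\Upsilon_M^i \circ (\Upsilon_M^i)^R$; since these are Fourier-Mukai type and the kernels base-change correctly, their formation commutes with completion at $p$. Formally locally the iterated mutation kills $E|_{\widehat{M}^+_p}$ by Proposition \ref{cor:sod}(iii), so the global iterated mutation produces a bounded complex with coherent cohomology of proper support whose completion at every $p \in U$ is zero; faithful flatness then forces it to vanish, hence $E = 0$. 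The delicate point is ensuring that these semiorthogonal projections are genuinely intrinsic local operations and that no global cohomology class can escape detection at every $\widehat{M}^+_p$; once this is established, the theorem follows by combining the orthogonality, fully-faithfulness, and generation statements.
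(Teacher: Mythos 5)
Your overall strategy is exactly the one in the paper: reduce the global statement to Proposition~\ref{cor:sod} by base-changing kernels and adjoints along the faithfully flat morphisms $\widehat{U}_p \to U$, then check the unit morphisms, semiorthogonality, and right-orthogonal vanishing on the formal completions $\widehat{M}_p^{\pm}$. There is, however, one step you skip that the paper makes explicit and that is genuinely needed. Proposition~\ref{cor:sod} is stated for the setup of Subsection~\ref{subsec:crit}, where the superpotential is \emph{purely quadratic} in $(\vec{x},\vec{y})$, of the form $w=\sum_{i,j} x_i y_j w_{ij}(\vec{u})$ as in (\ref{def:g}). The $\widehat{w}$ provided by Assumption~\ref{assum:cond}(ii) has higher-order terms $w^{(k)}_{\ast}$ for $k\ge 2$. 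You assert that ``the hypotheses of Proposition~\ref{cor:sod} are satisfied at every point'' citing Lemma~\ref{lem:smooth} and Lemma~\ref{lem:smooth2}, but these lemmas only give the smoothness and dimension conditions; they do not put you in the exact setting of Subsection~\ref{subsec:crit}. You must first argue (as the paper does in its Step~1) that the higher-order terms can be dropped: by the computation in Lemma~\ref{lem:smooth}, the critical loci of $\widehat{w}^{\pm}$ and of the truncated quadratic $w$ agree as closed subschemes of $\widehat{Y}^{\pm}_U$, so the formal-local diagram $\widehat{M}_p^{+}\to\widehat{U}_p\leftarrow\widehat{M}_p^{-}$ is \emph{literally} one of the diagrams (\ref{dia:pi0}) from Subsection~\ref{subsec:crit}, and only then does Proposition~\ref{cor:sod} apply.

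Two smaller points. First, the ``proper support over $U$'' remark is unnecessary (and not quite meaningful as stated): to detect vanishing of a coherent complex one just needs that $\widehat{\oO}_{U,p}\to\oO_{U,p}$ is faithfully flat for each closed $p\in U$, and that the support of a nonzero coherent sheaf contains a closed point; no properness is used. Second, in the generation step the ``iterated right mutations'' are a red herring: if $E\in\mathcal{T}^{\perp}$, the projections $\Phi_M\Phi_M^R$ and $\Upsilon^i_M(\Upsilon^i_M)^R$ already annihilate $E$, so the mutations fix $E$ rather than produce a new complex. The argument you actually need (and which the paper uses) is that compatibility of the functors and their adjoints with flat base change shows that $E|_{\widehat{M}^+_p}$ lies in the local right orthogonal, which is zero by Proposition~\ref{cor:sod}(iii); then $E=0$ by faithful flatness. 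With these corrections your outline matches the paper's proof.
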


We prove Theorem~\ref{thm:main} 
by dividing the proof into three steps. 
\begin{step}
For each $p \in U$, we may assume that the formal function (\ref{widew})
satisfies $w_{\ast}^{(k)}(\vec{u})=0$ for $k\ge 2$. 
\end{step}
\begin{proof}
In the notation of Assumption~\ref{assum:cond} (ii), 
let $w \in \oO_{Z}\otimes \widehat{\oO}_{U, p}$ be 
defined by
\begin{align*}
w=\sum_{i, j} x_i y_j w_{ij}^{(1)}(\vec{u}).
\end{align*}
We set $w^{\pm} \colon Y^{\pm}_{\widehat{U}_p} \to \mathbb{A}^1$
as in the diagram (\ref{def:pm0})
for $\widehat{U}=\widehat{U}_p$. 
Then the argument of Lemma~\ref{lem:smooth}
shows that 
$\{d w^{\pm}=0\} \subset Y_{\widehat{U}_p}^{\pm}$
are described as in the RHS of (\ref{write:M2}) and 
the isomorphisms (\ref{isom:iota}) give
\begin{align*}
\iota^{\pm} \colon 
\widehat{M}_p \stackrel{\cong}{\to}
\{d w^{\pm}=0\} \subset Y_{\widehat{U}_p}^{\pm}.
\end{align*}
Therefore we may replace 
$\widehat{w}$ with $w$ and 
assume that 
$w_{\ast}^{(k)}(\vec{u})=0$ for $k\ge 2$. 
\end{proof}

\begin{step}
Theorem~\ref{thm:main} (i), (ii) hold. 
\end{step}
\begin{proof}
Let $\Phi_{M, R}$ be the right adjoint functor of $\Phi_M$, 
and let $\pP \in D^b(M^- \times M^-)$ be the kernel 
object for the composition functor
\begin{align*}
\Phi_{M, R} \circ \Phi_M \colon D^b(M^-) \to D^b(M^+) \to D^b(M^-).
\end{align*}
Then there is a canonical morphism 
\begin{align}\label{can}
\oO_{\Delta_{M^-}} \to \pP
\end{align}
corresponding to the 
adjunction $\id_{M^-} \to \Phi_{M, R} \circ \Phi_M$. 
Let $\qQ$ be the cone of the morphism (\ref{can}). 
In order to show that $\Phi_M$ is fully-faithful, 
it is enough to show that $\qQ=0$. 
Indeed if this is the case, then 
the adjunction $\id_{M^-} \to \Phi_{M, R} \circ \Phi_M$
is an isomorphism hence $\Phi_M$ is fully-faithful. 
Note that $\qQ$ is supported on the 
fiber product $M^- \times_U M^-$
by the construction. 
Since $\widehat{U}_p \to U$ is faithfully-flat, 
the vanishing $\qQ=0$ is equivalent to 
\begin{align}\label{vanish:Qp}
\qQ \otimes_{\oO_{M^- \times M^-}} \oO_{\widehat{M}_p^-
\times \widehat{M}_p^-}
=0
\end{align}
for all $p \in U$. 

Now by Step~1,
Lemma~\ref{lem:smooth} and Lemma~\ref{lem:smooth2}, 
the diagram
\begin{align*}
\widehat{M}_p^{+} \to \widehat{U}_p \leftarrow
\widehat{M}_p^{-}
\end{align*}
satisfies the assumptions in Proposition~\ref{cor:sod}. 
Then the result of Proposition~\ref{cor:sod} (i) shows that 
the morphism (\ref{can}) is an isomorphism after 
pulling it back by
$\widehat{M}_p^- \times \widehat{M}_p^- \to 
M^- \times M^-$. 
Therefore the vanishing (\ref{vanish:Qp}) holds
for any $p \in U$,  
and Theorem~\ref{thm:main} (i) is proved.  
The proof of (ii) is similar. 
\end{proof}
\begin{step}
Theorem~\ref{thm:main} (iii) holds. 
\end{step}
\begin{proof}
We first show the semiorthogonality of the RHS of (\ref{sod:main}), i.e.
vanishings 
\begin{align*}
\Hom(\Imm \Phi_M, \Imm \Upsilon_M^i)=0, \ 
\Hom(\Imm \Upsilon_M^i, \Imm \Upsilon_M^j)=0,
\end{align*}
for $i<j$.
It is enough to check that 
\begin{align*}
\Phi_{M, R} \circ \Upsilon_M^i=0, \ 
\ \Upsilon_{M, R}^i \circ \Upsilon_M^j=0
\end{align*}
where $\Phi_{M, R}$, $\Upsilon_{M, R}^i$ are the 
right adjoint functors of $\Phi_M$, $\Upsilon_M^i$ respectively. 
Again it is enough to check these vanishings
 formally locally at every $p\in U$, 
and Proposition~\ref{cor:sod} (iii) implies that these 
vanishings hold.

Let $E \in D^b(M^+)$ be an object 
in the right orthogonal complement of the RHS of (\ref{sod:main}). 
Then Proposition~\ref{cor:sod} (iii) 
 implies that $E=0$ on $\widehat{M}_p^+$ for any $p \in U$.
Therefore $E=0$ holds, 
and the RHS of (\ref{sod:main}) generates the LHS. 

\end{proof}

\section{SOD for stable pair moduli spaces}\label{sec:SODpair}
In this section, we apply Theorem~\ref{thm:main} 
to prove Theorem~\ref{intro:thm:PT}, i.e. 
the existence of certain SOD 
on moduli spaces of Pandharipande-Thomas
stable pairs on CY 3-folds. 
\subsection{Stable pairs and stable sheaves}
Let $X$ be a smooth quasi-projective 
variety. By definition, a \textit{stable pair} 
by Pandharipande-Thomas~\cite{PT}
consists of data
\begin{align*}
(F, s), \ s \colon \oO_X \to F
\end{align*}
where $F$ is a pure one dimensional 
coherent sheaf on $X$ with compact support, 
and $s$ is surjective in dimension one. 
For $\beta \in H_2(X, \mathbb{Z})$ and 
$n \in \mathbb{Z}$, 
the moduli space of stable pairs 
$(F, s)$ satisfying the condition
\begin{align}\label{ch:F}
[F]=\beta, \ \chi(F)=n
\end{align}
is
denoted by $P_n(X, \beta)$. 
Here $[F]$ is the homology class of the 
fundamental one cycle associated with $F$. 
The moduli space $P_n(X, \beta)$ is a quasi-projective 
scheme (see~\cite{PT}). 
We define the open subscheme
\begin{align*}
P_n^{\circ}(X, \beta) \subset P_n(X, \beta)
\end{align*}
to be consisting of 
stable pairs $(F, s)$ such that the 
fundamental one cycle associated with $F$ 
is irreducible. 

We denote by $U_n(X, \beta)$
the moduli space of compactly supported 
one dimensional Gieseker stable sheaves $F$ on 
$X$ with respect to a fixed polarization, 
satisfying the condition (\ref{ch:F}).  
The moduli space $U_n(X, \beta)$ is a quasi-projective 
scheme (see~\cite{HL}). We define the 
open subscheme
\begin{align*}
U_n^{\circ}(X, \beta) \subset U_n(X, \beta)
\end{align*}
consisting of one dimensional 
stable sheaves 
whose fundamental one
cycles are irreducible. 
Note that $U_n^{\circ}(X, \beta)$
is the moduli space of pure one dimensional 
sheaves $F$ with irreducible fundamental 
one cycles satisfying (\ref{ch:F}).
In particular, $U_n^{\circ}(X, \beta)$ is independent of a 
choice of a polarization. 
\begin{rmk} 
Alternatively, $U_n^{\circ}(X, \beta)$
parametrizes pairs 
$(C, F)$
where $C \subset X$ is an irreducible projective curve
with $[F]=\beta$, 
and $F \in \Coh(C)$ is a rank one torsion free sheaf
satisfying $\chi(F)=n$. 
\end{rmk}

\subsection{Wall-crossing diagram of stable pair moduli spaces}
\label{subsec:wallpair}
Suppose that $X$ is a smooth projective 
CY 3-fold, i.e. 
\begin{align*}
\dim X=3, \ K_X=0.
\end{align*} 
Let us take $\beta \in H_2(X, \mathbb{Z})$
and $n \in \mathbb{Z}_{\ge 0}$. 
Then as in~\cite{PT3}, we have the diagram
\begin{align}\label{dia:PT}
\xymatrix{
P_n^{\circ}(X, \beta) \ar[rd]_-{\pi^+} & & \ar[ld]^-{\pi^-} 
P_{-n}^{\circ}(X, \beta) \\
& U_n^{\circ}(X, \beta). &
}
\end{align}
Here $\pi^{\pm}$ are defined by
\begin{align*}
\pi^+(F, s)=F, \ 
\pi^-(F', s')=\eE xt^2_X(F', \oO_X). 
\end{align*}
If furthermore $H^1(\oO_X)=0$, 
then the diagram (\ref{dia:PT}) 
gives an example of an
analytic (in particular formal) 
d-critical simple flip 
(see~\cite[Theorem~6.18]{Toddbir}). 
Here we recall some more details. 

Let us take a point $p \in U_n^{\circ}(X, \beta)$
corresponding to a pure one dimensional sheaf $F$
on $X$. We write
\begin{align*}
&\widehat{U}_n(X, \beta)_p \cneq 
\Spec 
\widehat{\oO}_{U_n(X, \beta), p}, \\
&\widehat{P}_n(X, \beta)_p \cneq 
P_n^{\circ}(X, \beta) \times_{U_n^{\circ}(X, \beta)}
\widehat{U}_n(X, \beta)_p. 
\end{align*}
 We take a collection of objects in
$D^b(X)$
\begin{align}\label{E12}
E_{\bullet}=(E_1, E_2), 
\ E_1=\oO_X, \ 
E_2=F[-1].
\end{align}
We set vector spaces $V^+$, $V^-$ and $U$ as follows:
\begin{align}
&\label{V:Ext}V^+ \cneq \Ext^1_X(E_1, E_2)=H^0(F), \\ 
&\notag V^- \cneq \Ext^1_X(E_2, E_1)=H^1(F)^{\vee}, \\
&\notag U \cneq \Ext^1_X(E_2, E_2)=\Ext_X^1(F, F). 
\end{align}
Below we use the notation and 
convention in Subsection~\ref{subsec:flip}
and Subsection~\ref{subsec:crit}, e.g. 
$\mathbb{C}^{\ast}$-actions on $V^{\pm}$, 
the GIT quotients $Y^{\pm}$, $Z$, 
coordinates $\vec{x}$, $\vec{y}$, $\vec{u}$
on $V^{\pm}$, $U$, 
$a=\dim V^+$, $b=\dim V^-$, $g=\dim U$, 
etc. 
We also take the formal completion
$\widehat{Z}_U$ of $Z_U$ at $(0, 0)$, 
and set $\widehat{f}_U^{\pm} \colon \widehat{Y}_U^{\pm} \to \widehat{Z}_U$
as in (\ref{formal:YZ}). 
The following result is obtained in~\cite{Toddbir}: 
\begin{thm}\emph{(\cite[Theorem~6.18]{Toddbir})}\label{thm:Pflip}
In the above situation, 
there exist an element
$\widehat{w} \in \widehat{\oO}_{Z_U, (0, 0)}$ 
and the
commutative diagram

\begin{align}\label{dia:YZ}
\xymatrix{
\widehat{P}_n(X, \beta)_p \ar[r]^-{\cong}_-{\iota^{\pm}} \ar[d]_-{\pi^+}
& \{d\widehat{w}^{\pm}=0\} \ar@<-0.3ex>@{^{(}->}[r] \ar[d]
& \widehat{Y}_U^{\pm} 
\ar[d]_{\widehat{f}_U^{\pm}} \ar[rd]^-{\widehat{w}^{\pm}} &\\
\widehat{U}_n(X, \beta)_p \ar[r]^-{\cong} &
\{dw^{(0)}=0\} \ar@<-0.3ex>@{^{(}->}[r]_-{j}
& \widehat{Z}_U \ar[r]_-{\widehat{w}}  & \mathbb{A}^1.
}
\end{align}
Here $\widehat{w}^{\pm}$
are defined by the above commutative diagram, 
the bottom left arrow 
sends $p$ to $(0, 0)$, 
the map $j$
is 
the composition of the inclusion 
$\{dw^{(0)}=0\} \subset \widehat{U}$ with 
the inclusion 
$\widehat{U} \hookrightarrow \widehat{Z}_U$
given by 
$u\mapsto (0, u)$. 
\end{thm}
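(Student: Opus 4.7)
The plan is to realize the entire picture as a shifted symplectic local model at $p$, and then read off the two GIT chambers of a $\mathbb{C}^{\ast}$-quotient as the two stable pair sides. The starting point is to consider a derived moduli stack $\mathbf{M}$ of triples $(E_1', E_2', \alpha, \beta)$ with $E_1'$ in a formal neighborhood of $\oO_X$, $E_2'$ in a formal neighborhood of $F[-1]$, and morphisms $\alpha \in \Hom(E_1', E_2'[1])$, $\beta \in \Hom(E_2', E_1'[1])$. The rigidity assumption $H^i(\oO_X) = 0$ for $i = 1, 2$ ensures that deformations of $E_1'$ are trivial, so at the basepoint $p$ the truncated tangent complex of $\mathbf{M}/B\mathbb{C}^{\ast}$ identifies with $V^+ \oplus V^- \oplus U$, where the $\mathbb{C}^{\ast}$-action is the scaling automorphism of the $E_1$-summand (acting with weight $+1$ on $V^+$ and, via Serre duality, weight $-1$ on $V^-$). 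This identification also intertwines the natural stratifications: $\alpha \neq 0$ is exactly the open locus parametrizing PT two-term complexes $(\oO_X \xrightarrow{s} F)$, while $\beta \neq 0$ is Serre-dual to the first and parametrizes objects of the form $(F' \to \oO_X[1])$, i.e.\ dual PT pairs with $F' = \eE xt^2_X(F, \oO_X)$.

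Next, since $X$ is CY $3$, the derived Hom-complex is $3$-Calabi--Yau and PTVV~\cite{PTVV} endow $\mathbf{M}$ with a $(-1)$-shifted symplectic structure. Applying the Brav--Bussi--Joyce formal Darboux theorem~\cite{BBBJ} in a $\mathbb{C}^{\ast}$-equivariant form produces a formal function
\[
\widehat{w}\in \widehat{\oO}_{Z_U,(0,0)}
\]
on the formal completion $\widehat{Z}_U$ of $Z_U=(V^+\times V^-)\sslash \mathbb{C}^{\ast}\times U$ at $(0,p)$, whose critical locus recovers the truncation of the master derived moduli in a neighborhood of the base point. Pulling $\widehat{w}$ back via $\widehat{f}_U^{\pm}\colon \widehat{Y}_U^{\pm}\to \widehat{Z}_U$ gives $\widehat{w}^{\pm}$, and restricting $\widehat{w}$ to the closed inclusion $j\colon \widehat{U} \hookrightarrow \widehat{Z}_U$, $u\mapsto (0,u)$ gives a function $w^{(0)}$ whose critical locus is the formal neighborhood of $F$ in $U_n^{\circ}(X,\beta)$ (this is the standard d-critical structure on moduli of simple sheaves on a CY $3$-fold).

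Finally, one identifies the two chambers: the open semistable locus for the $\mathbb{C}^{\ast}$-linearization corresponding to $\chi>0$ is exactly $\widehat{Y}_U^+\subset (V^+\times V^-)_{\widehat{U}}$, and its critical locus $\{d\widehat{w}^+=0\}$ is, by construction of the master moduli, isomorphic as d-critical locus to $\widehat{P}_n(X,\beta)_p$ via a map $\iota^+$ sending a stable pair $(F',s')$ near $(F,s)$ to the pair (its deformation class, extension class). The symmetric argument with the opposite linearization, combined with the Serre-dual identification $V^-\cong \Ext^1_X(F,\oO_X)$, yields the analogous $\iota^-\colon \widehat{P}_{-n}(X,\beta)_p\simeq \{d\widehat{w}^-=0\}$; and both identifications fit together into the commutative diagram \eqref{dia:YZ} because $\widehat{f}_U^{\pm}$ are induced by the defining GIT quotient $Y^{\pm}\to Z$.

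The main obstacle is the equivariant Darboux step: one needs a version of~\cite{BBBJ} producing a $\mathbb{C}^{\ast}$-equivariant formal critical chart on $\widehat{Z}_U$ (not merely an \'etale/analytic one on $(V^+\times V^-)_{\widehat{U}}$), together with the verification that the induced d-critical structures on the two chambers agree with the canonical ones on $\widehat{P}_{\pm n}(X,\beta)_p$ coming from their own PTVV structures. Once this compatibility is in hand, the shape of the diagram \eqref{dia:YZ} and the identifications $\iota^{\pm}$ follow formally from the definitions of $V^{\pm}$, $U$ in \eqref{V:Ext} and the GIT construction in \eqref{def:YZ}.
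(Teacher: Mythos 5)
This theorem is not proved in the present paper: it is quoted verbatim from \cite[Theorem~6.18]{Toddbir}, and the only indication of the method of proof appears in the text following the statement, which says that the formal function $\widehat{w}$ in (\ref{formal:w}) is ``constructed using the minimal cyclic $A_{\infty}$-structure on the subcategory of $D^b(X)$ generated by $E_1$ and $E_2$''.  That is a genuinely different route from yours.  The cyclic-$A_\infty$ approach builds a potential $w$ directly from the $A_\infty$-operations $m_k$ on $\Ext^\ast(E_1\oplus E_2, E_1\oplus E_2)$ together with the CY3 cyclic pairing, via the standard formula $w(a)=\sum_{k\ge 2}\tfrac{1}{k+1}\langle m_k(a,\dots,a),a\rangle$.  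This has two built-in advantages that your PTVV/BBBJ sketch has to work for: the resulting $w$ is automatically invariant under $\Aut(E_1\oplus E_2)/\mathbb{C}^{\ast}\cong\mathbb{C}^{\ast}$ (so it descends to $\widehat{Z}_U$ with no further argument), and it gives an explicit handle on the quadratic coefficient $w^{(1)}_{ij}(\vec u)$ as the triple $A_\infty$-product (\ref{w:linear}), which is exactly what is used later in Lemma~\ref{lem:inj} and in the verification of Assumption~\ref{assum:cond}~(ii).

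The gap in your proposal is precisely the step you flag as ``the main obstacle'': an equivariant formal Darboux theorem that produces a $\mathbb{C}^{\ast}$-invariant critical chart, together with a comparison between the $\mathbb{C}^{\ast}$-equivariant d-critical/shifted-symplectic structure on your master stack $\mathbf{M}$ and the canonical PTVV structures on the two open substacks $\{\alpha\neq 0\}$ and $\{\beta\neq 0\}$ (which become $\widehat{P}_{\pm n}(X,\beta)_p$ after taking the GIT quotient).  Neither of these is off-the-shelf in~\cite{BBBJ}: the Darboux theorem there is not stated equivariantly, and the comparison of d-critical structures under open immersion of stacks followed by rigidification and GIT quotient is a nontrivial bookkeeping exercise.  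Without the equivariant statement you cannot even formulate $\widehat{w}$ as an element of $\widehat{\oO}_{Z_U,(0,0)}$ rather than of $\widehat{\oO}_{(V^+\times V^-\times U),0}$.  Moreover, even granting an invariant chart, the PTVV route leaves $w^{(1)}_{ij}$ implicit, so you would still need a separate argument (e.g.\ a comparison with the $A_\infty$-minimal model) to establish (\ref{w:linear}), which the later steps of the paper rely on.  In short: your sketch correctly identifies all the moving parts, but the cyclic-$A_\infty$ construction used in~\cite{Toddbir} is not an interchangeable alternative here; it is chosen precisely because it delivers equivariance and the explicit low-order coefficients for free.
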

\begin{rmk}
In~\cite[Theorem~6.18]{Todbir}, 
it is stated that we can take $\widehat{w}$ as an 
analytic function on an analytic open neighborhood 
of $0 \in Z_U$, 
and the diagram (\ref{dia:YZ})
can be extended to analytic 
neighborhoods of $0 \in Z_U$ and 
$p \in U_n^{\circ}(X, \beta)$. 
The 
formal version in Theorem~\ref{thm:Pflip} 
is weaker than the analytic version in~\cite[Theorem~6.18]{Todbir}, 
but enough for the purpose of this paper.  
\end{rmk}

Let us write the formal function $\widehat{w}$ 
in Theorem~\ref{thm:Pflip} as
\begin{align}\label{formal:w}
\widehat{w} 
=w^{(0)}(\vec{u})+\sum_{i, j}x_i y_j w_{ij}^{(1)}(\vec{u})
+\sum_{i, i', j, j'}x_i x_{i'} y_j y_{j'} w_{i i' j j'}^{(2)}(\vec{u}) +\cdots
\end{align}
for $w_{\ast}^{(k)}(\vec{u}) \in \widehat{\oO}_{U, 0}$. 
The formal function (\ref{formal:w})
is 
constructed using the 
minimal cyclic $A_{\infty}$-structure on 
the subcategory of $D^b(X)$ generated by 
$E_1$ and $E_2$
(see~\cite[Subsection~5.1]{Toddbir}). 
In particular, the linear term of $w_{ij}^{(1)}(\vec{u})$ is 
give as follows. 
Let us consider the triple product 
\begin{align}\notag
\Ext_X^1(E_2, E_2) \otimes \Ext_X^1(E_1, E_2) \otimes
&\Ext_X^1(E_2, E_1) \\
\label{triple}
&\to \Ext_X^3(E_2, E_2) 
\cong \mathbb{C}
\end{align}
given by the composition, 
where the last isomorphism is given by the 
Serre duality. 
For $1\le i\le a$, $1\le j\le b$
and $1\le k \le g$, let 
\begin{align*}
x_i^{\vee} \in \Ext_X^1(E_1, E_2), \ 
y_j^{\vee} \in \Ext_X^1(E_2, E_1), \ 
u_k^{\vee} \in \Ext_X^1(E_2, E_2)
\end{align*}
be the dual basis of 
$x_i$, $y_j$, $u_k$
respectively. 
Then using the triple product (\ref{triple}), 
we have
\begin{align}\label{w:linear}
w_{ij}^{(1)}(\vec{u})=
\frac{1}{2} \sum_{k=1}^g(x_i^{\vee} \cdot y_j^{\vee} \cdot u_k^{\vee}) 
u_k +(\mbox{higher order terms in } \vec{u}). 
\end{align}
We show that 
$w_{ij}^{(1)}(\vec{u})$ satisfies the condition 
in Assumption~\ref{assum:cond} (ii): 
\begin{lem}\label{lem:inj}
The map 
\begin{align}\label{ext:compose}
\Ext_X^1(E_1, E_2) \otimes \Ext_X^1(E_2, E_1) \to \Ext_X^2(E_2, E_2)
\end{align}
given by the composition 
is injective on each factors. 
\end{lem}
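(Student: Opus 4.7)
The plan is to translate the composition into explicit terms involving the sheaf $F$, reduce both injectivity claims to a single statement, and then verify it via local $\eE xt$ computations on the Calabi-Yau 3-fold.

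First, using $E_1 = \oO_X$ and $E_2 = F[-1]$, I identify the Ext groups as $\Ext^1(E_1, E_2) = H^0(F)$, $\Ext^1(E_2, E_1) = \Ext^2(F, \oO_X)$, and $\Ext^2(E_2, E_2) = \Ext^2(F, F)$. The composition then becomes the bilinear map
\[
H^0(F) \otimes \Ext^2(F, \oO_X) \to \Ext^2(F, F), \quad (s, t) \mapsto s[2] \circ t,
\]
with $s \colon \oO_X \to F$ and $t \colon F \to \oO_X[2]$. For any bilinear map of finite-dimensional vector spaces, injectivity on the first factor is equivalent to the symmetric nondegeneracy condition ``$s \neq 0$ and $t \neq 0$ imply $s \cdot t \neq 0$'', and likewise for the second factor, so both claims reduce to showing: for every nonzero $s \in H^0(F)$, the functorial map $s_\ast \colon \Ext^2(F, \oO_X) \to \Ext^2(F, F)$ is injective.

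Next I verify this reduced claim via the local-to-global spectral sequence. Since $F$ is pure of dimension one on the smooth CY 3-fold $X$, $\eE xt^i(F, \oO_X) = 0$ for $i < 2$, so $\Ext^2(F, \oO_X) = H^0(F^D)$, where $F^D \cneq \eE xt^2(F, \oO_X)$ is itself pure of dimension one (the Grothendieck dual). Irreducibility of $[F]$ forces $F = i_\ast G$ for a rank-one torsion-free sheaf $G$ on the reduced irreducible support curve $C$. By Grothendieck duality along $i$, the sheaf-level map $\eE xt^2(F, s) \colon F^D \to \eE xt^2(F, F)$ is, locally on $C$, multiplication by $s$ on the torsion-free $F^D$. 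Since $s \neq 0$ on the irreducible support of $F$, this sheaf map is injective, and passage to $H^0$ preserves injectivity, producing an injection $\Ext^2(F, \oO_X) = H^0(F^D) \hookrightarrow H^0(\eE xt^2(F, F))$. Because this injection factors through $\Ext^2(F, F)$ via the natural edge map of the spectral sequence, $s_\ast$ itself is injective.

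The main technical obstacle is justifying the Grothendieck-duality identification of the sheaf-level map as multiplication by $s$ when the support curve $C$ is singular or non-LCI; this requires working with the relative dualizing complex $\omega_{C/X}$ rather than a dualizing line bundle, but the essential mechanism, that a nonzero global section of a pure one-dimensional sheaf is a non-zero-divisor on its Grothendieck dual, is standard.
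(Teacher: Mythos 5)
Your proof is correct and takes a genuinely different route from the paper's. The paper pushes everything down to the support curve $C$: it identifies $\Ext_X^2(j_*E,\oO_X)\cong\Hom_C(E,\omega_C)$ by Grothendieck duality, uses Serre duality on $X$ and then on $C$ to produce a surjection $\Ext_X^2(j_*E,j_*E)\twoheadrightarrow\Hom_C(\eE nd(E),\omega_C)$, identifies the resulting pairing with the naive composition pairing $H^0(E)\otimes\Hom(E,\omega_C)\to\Hom(\eE nd(E),\omega_C)$ on $C$, and concludes injectivity from torsion-freeness of $E$, $\eE nd(E)$, $\omega_C$ on the integral Cohen--Macaulay curve $C$. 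You instead observe, correctly, that by symmetry of bilinear nondegeneracy it suffices to prove $s_*\colon\Ext_X^2(F,\oO_X)\to\Ext_X^2(F,F)$ is injective for $s\neq 0$, reduce this via the local-to-global spectral sequence to injectivity of the sheaf map $\eE xt^2_X(F,s)$, and conclude from torsion-freeness of $F^D=\eE xt^2_X(F,\oO_X)$. This avoids global Serre duality entirely and is in that sense more local and transparent. The one imprecision---which you flag yourself---is describing the sheaf map as ``multiplication by $s$ on $F^D$'': its target is $\eE xt^2_X(F,F)\cong j_*(E^\vee\otimes_{\oO_C}E)$, not $F^D$. A clean way to nail the step: since $F$ is perfect on the smooth $X$, $\dR\HOM_X(F,-)\cong F^\vee\dotimes(-)$ with $F^\vee=j_*E^\vee[-2]$, and $\eE xt^2_X(F,s)$ is then $j_*(\id_{E^\vee}\otimes\sigma)\colon j_*E^\vee\to j_*(E^\vee\otimes_{\oO_C}E)$, where $\sigma\colon\oO_C\to E$ is the section on $C$ adjoint to $s$; since $\sigma$ is injective with cokernel of finite support, the kernel of $\id_{E^\vee}\otimes\sigma$ is the image of a punctually supported $\TOR_1$ inside the torsion-free sheaf $E^\vee$ on the integral curve $C$, hence zero. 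Both proofs ultimately exploit the same mechanism---torsion-freeness on an integral curve---but the paper accesses it globally through Serre duality while you reach it directly at the level of $\eE xt$ sheaves.
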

\begin{proof}
Recall that 
 $E_1$, $E_2$ are taken as in (\ref{E12}), 
i.e. $E_1=\oO_X$ and $E_2=F[-1]$ for a pure one dimensional 
sheaf $F$ on $X$ 
with irreducible fundamental one cycle. 
 So $F$ is written as $j_{\ast}E$ 
where $j \colon C \hookrightarrow X$ is an irreducible
 Cohen-Macaulay curve
and $E$ is a rank one torsion free sheaf on $C$. 
Therefore the map (\ref{ext:compose}) is 
\begin{align}\label{compose:ext}
H^0(C, E) \otimes \Ext_X^2(j_{\ast}E, \oO_X)
\to \Ext_X^2(j_{\ast}E, j_{\ast}E). 
\end{align}
Note that 
\begin{align*}
\Ext_X^2(j_{\ast}E, \oO_X)=
\Ext_C^2(E, j^{!}\oO_X)=
\Hom(E, \omega_C)
\end{align*}
where $\omega_C$ is the dualizing sheaf on $C$. 
Also we have 
$H^1(C, \eE nd(E)) \subset \Ext_X^1(j_{\ast}E, j_{\ast}E)$, 
and the Serre duality gives the surjection
\begin{align*}
\Ext_X^2(j_{\ast}E, j_{\ast}E) \twoheadrightarrow
\Hom(\eE nd(E), \omega_C). 
\end{align*}
By composing it with (\ref{compose:ext}) we obtain the map
\begin{align}\label{inj:omega}
H^0(C, E) \otimes \Hom(E, \omega_C) \to
\Hom(\eE nd(E), \omega_C). 
\end{align}
The above bilinear map is 
given by the natural composition map. 
Since $E$, $\eE nd(E)$ are torsion free on $C$, 
and $\omega_C$ is also torsion free on $C$ as 
$C$ is Cohen-Macaulay, the 
bilinear map (\ref{inj:omega}) is injective on each factors. 
Therefore the lemma holds. 
\end{proof}

We also have the following lemma:  
\begin{lem}\label{lem:O1}
There is a $\pi^+$-ample line bundle 
$\oO_P(1)$ on $P_n^{\circ}(X, \beta)$ such that
for any $p \in U_n^{\circ}(X, \beta)$, 
the isomorphisms $\iota^{+}$ in the diagram 
(\ref{thm:Pflip})
satisfies
\begin{align*}
(\iota^{+})^{\ast}(\oO_{\widehat{Y}_U^{+}}(1)|_{\{d\widehat{w}^{+}=0\}})
\cong \oO_{P}(1)|_{\widehat{P}_n(X, \beta)_p}. 
\end{align*}
\end{lem}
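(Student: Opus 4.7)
The plan is to construct $\oO_P(1)$ from the universal stable pair on $P_n^{\circ}(X,\beta)$, and then to verify the formal-local matching by tracing through the construction of $\iota^{+}$ in Theorem~\ref{thm:Pflip}.

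Since $P_n^{\circ}(X,\beta)$ is a fine moduli space of stable pairs, it carries a universal stable pair $(\mathbb{F}, \mathbb{s}\colon \oO \to \mathbb{F})$ on $P_n^{\circ}(X,\beta) \times X$. Under the running hypothesis $H^1(\oO_X)=0$ (needed even to apply Theorem~\ref{thm:Pflip}), together with local constancy of $h^i(F)$ along $U_n^{\circ}$ (forced by the smoothness assumptions relevant for Theorem~\ref{intro:thm:PT}), the direct image $\pi_{P*}\mathbb{F}$ is locally free. The universal section then yields an exact sequence
\begin{align*}
0 \to \oO_{P_n^{\circ}} \xrightarrow{\pi_{P*}\mathbb{s}} \pi_{P*}\mathbb{F} \to \mathcal{T}_{\pi^+} \to 0
\end{align*}
whose fiberwise restriction over $[F] \in U_n^{\circ}$ is the Euler sequence on $P_F = \mathbb{P}(H^0(F))$. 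I would define $\oO_P(1)$ either (a) via the Knudsen--Mumford determinant of the two-term complex $\mathbb{I}^{\bullet} = [\oO \to \mathbb{F}]$ (suitably twisted by a pullback of a determinant line bundle from $U_n^{\circ}(X,\beta)$ constructed from a universal sheaf $\mathcal{U}$ there), or (b) by identifying $P_n^{\circ}(X,\beta) \cong \mathbb{P}((\pi_U)_*\mathcal{U})$ as a projective bundle (possibly twisted) and taking the relative $\oO(1)$. In either case, the fiberwise restriction is $\oO_{\mathbb{P}(H^0(F))}(1)$, from which $\pi^+$-ampleness is immediate.

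For the formal-local compatibility, I would pull $\oO_P(1)$ back to $\widehat{P}_n(X,\beta)_p$ and identify it with $(\iota^{+})^{\ast}\oO_{\widehat{Y}_U^{+}}(1)$. The key observation is that the identifications $V^+ = \Ext^1(E_1,E_2) = H^0(F)$ and $\iota^+$ in Theorem~\ref{thm:Pflip} are built, in Toddbir, from the minimal cyclic $A_\infty$-model on $\Ext^*(E_1 \oplus E_2, E_1 \oplus E_2)$ together with the universal deformation; in particular the universal stable pair restricted to the formal neighborhood matches, up to a line bundle pulled back from $\widehat{U}_n(X,\beta)_p$, the tautological family on the projective bundle $\mathbb{P}(V^+)_{\widehat{U}}$. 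Since $\widehat{U}_n(X,\beta)_p$ is the spectrum of a complete local ring, its Picard group is trivial, so such pullbacks disappear and the formal restriction of $\oO_P(1)$ is canonically identified with $\oO_{\widehat{Y}_U^+}(1)|_{\{d\widehat{w}^+=0\}}$.

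The main obstacle is the second step, the formal-local comparison: one must unwind how $\iota^+$ is produced from the $A_\infty$-transfer in Toddbir and verify that, under the resulting identification $V^+ = H^0(F)$, the universal family of stable pairs restricts to the tautological family on $\mathbb{P}(V^+)_{\widehat{U}}$ (up to a trivial twist). A secondary technicality is the possible non-existence of an honest universal sheaf on $U_n^{\circ}(X,\beta) \times X$; this is harmless because any Brauer twist cancels in the projectivization, so $\oO_P(1)$ is still a bona fide line bundle on $P_n^{\circ}(X,\beta)$.
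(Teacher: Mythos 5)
The central claim of your first paragraph is false, and it sinks both variant constructions. You assert that $h^i(F)$ is locally constant on $U_n^{\circ}(X,\beta)$, ``forced by the smoothness assumptions.'' It is not. The fiber of $\pi^+$ over $[F]$ is $\mathbb{P}(H^0(F))$ and the fiber of $\pi^-$ over $[F]$ is $\mathbb{P}(H^1(F))$; since $\chi(F)=n$ is fixed, $h^0(F)=n+h^1(F)$, so the fiber dimension of $\pi^+$ jumps by exactly the amount $h^1(F)$ jumps. Generically on $U_n^{\circ}$ one has $h^1(F)=0$ (hence $\dim P_n^{\circ}=n+g-1$), but $h^1(F)>0$ precisely over the image of $\pi^-$, which is nonempty whenever $P_{-n}^{\circ}(X,\beta)\neq\emptyset$ --- that is, in every case where Lemma~\ref{lem:O1} has any content. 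Consequently $\pi_{P*}\mathbb{F}$ is \emph{not} locally free, $P_n^{\circ}(X,\beta)$ is \emph{not} a projective bundle $\mathbb{P}((\pi_U)_*\mathcal{U})$ over $U_n^{\circ}$, and your option (b) breaks down; option (a) via a Knudsen--Mumford determinant is not developed far enough to check, but you have not shown that such a determinant line restricts on fibers to $\oO_{\mathbb{P}(H^0(F))}(1)$ rather than some other power.

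The idea you are missing is a twist by an auxiliary ample divisor. Choose $H$ ample on $X$ so large that for every $[F]\in U_n^{\circ}(X,\beta)$ one has $H^1(X,F(H))=0$ and $F\hookrightarrow F(H)$; such $H$ exists because $U_n^{\circ}$ is of finite type. Setting $d=H\cdot\beta$, this produces a closed embedding $P_n^{\circ}(X,\beta)\hookrightarrow P_{n+d}^{\circ}(X,\beta)$, $(\oO_X\to F)\mapsto(\oO_X\to F\hookrightarrow F(H))$, commuting with the isomorphism $U_n^{\circ}\cong U_{n+d}^{\circ}$, $F\mapsto F(H)$. Because $H^1(F(H))=0$, the map $\pi^+\colon P_{n+d}^{\circ}\to U_{n+d}^{\circ}$ \emph{is} a genuine projective bundle with fiber $\mathbb{P}(H^0(F(H)))$, so it carries a tautological $\oO(1)$; its restriction to $P_n^{\circ}$ is the required $\oO_P(1)$. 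The formal-local compatibility with $\iota^+$ then follows because the embedding $\widehat{P}_n(X,\beta)_p\hookrightarrow\mathbb{P}(V^+)_{\widehat U_p}$ furnished by Theorem~\ref{thm:Pflip} is, via $V^+=H^0(F)\subset H^0(F(H))$, a closed subscheme of the corresponding projective bundle chart, and the tautological bundles match. Your second paragraph's reasoning (triviality of $\Pic$ of a complete local base kills any twist from $\widehat U_p$) is a sound observation and would complete the local check, but it rests on a global $\oO_P(1)$ that your first paragraph fails to produce.
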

\begin{proof}
Let $H$ be a sufficiently ample divisor on $X$ such that 
for any $[F] \in U_n^{\circ}(X, \beta)$, 
the sheaf $F(H) \cneq F \otimes \oO_X(H)$
satisfies  
$H^1(X, F(H))=0$ 
and the natural map 
$F \to F(H)$ is injective. 
Such an ample divisor $H$ exists as $U_n^{\circ}(X, \beta)$ is of finite type. 
By setting $d=H \cdot \beta$, we have the commutative diagram
\begin{align}\label{dia:stable}
\xymatrix{
P_n^{\circ}(X, \beta) 
 \ar@<-0.3ex>@{^{(}->}[r] \ar[d]_-{\pi^+}
& P_{n+d}^{\circ}(X, \beta)
\ar[d]^-{\pi^+} \\
U_n^{\circ}(X, \beta) \ar[r]^-{\cong}
& U_{n+d}^{\circ}(X, \beta).
}
\end{align}
Here the top arrow is given by 
\begin{align*}
(\oO_X \to F)
\mapsto 
(\oO_X \to F \hookrightarrow F(H))
\end{align*}
and the bottom arrow 
sends a stable sheaf $F$ to $F(H)$. 
By the condition $H^1(X, F(H))=0$, the right
arrow is a projective bundle with fiber $\mathbb{P}(H^0(X, F(H)))$. 
By restricting the tautological 
line bundle on $P_{n+d}^{\circ}(X, \beta)$
to
$P_n^{\circ}(X, \beta)$ by the 
top arrow of (\ref{dia:stable}), we obtain the 
desired $\oO_P(1)$. 
\end{proof}

\subsection{SOD for stable pair moduli spaces}
We keep the situation in the previous 
subsections. For the diagram (\ref{dia:PT}), let 
$\wW^{\circ}$ be the fiber product
\begin{align*}
\wW^{\circ} \cneq P_n^{\circ}(X, \beta)
 \times_{U_n^{\circ}(X, \beta)}P_{-n}^{\circ}(X, \beta).
\end{align*}
The following is the main result in this section:
\begin{thm}\label{thm:PTpair}
For $n\ge 0$ and $\beta \in H_2(X, \mathbb{Z})$, 
suppose that $U_n^{\circ}(X, \beta)$
is non-singular of dimension $g$. 
Then $P_{\pm n}^{\circ}(X, \beta)$ are also 
non-singular with dimension $\pm n+g-1$, 
and we have the following: 

(i) The functor
\begin{align*}
\Phi_P \cneq 
\Phi^{\oO_{\wW^{\circ}}} 
 \colon D^b(P_{-n}^{\circ}(X, \beta)) \to D^b(P_n^{\circ}(X, \beta))
\end{align*}
is fully-faithful. 

(ii) There is a $\pi^{+}$-ample line bundle 
$\oO_P(1)$ on $P_n^{\circ}(X, \beta)$ such that
if $n\ge 1$, the functor
\begin{align*}
\Upsilon^i_P \colon D^b(U_n^{\circ}(X, \beta)) \to D^b(P_n^{\circ}(X, \beta))
\end{align*}
defined by $\dL\mathrm{\pi^+}^{\ast}(-) \otimes \oO_P(i)$ is
 fully-faithful. 

(iii) We have the semiorthogonal decomposition
\begin{align*}
D^b(P_n^{\circ}(X, \beta))=
\langle \Imm \Upsilon_P^{-n+1}, \ldots, \Imm \Upsilon_P^{0}, \Imm \Phi_P
 \rangle.
\end{align*}
\end{thm}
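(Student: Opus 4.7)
The plan is to reduce the theorem to Theorem~\ref{thm:main} by verifying that the diagram (\ref{dia:PT}) satisfies Assumption~\ref{assum:cond}, and then extracting the conclusions directly.

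First, I would handle the non-singularity hypothesis. By Theorem~\ref{thm:Pflip}, at every $p \in U_n^{\circ}(X,\beta)$ we have the commutative diagram (\ref{dia:YZ}) in which $\widehat{U}_n(X,\beta)_p \cong \{dw^{(0)}=0\} \subset \widehat{U}$, where $\widehat{U} = \Spec \widehat{\oO}_{U,0}$ with $U = \Ext^1_X(F,F)$ of dimension $g$. The smoothness hypothesis then forces $\{dw^{(0)}=0\}$ to equal $\widehat{U}$, so $w^{(0)}$ is constant; subtracting this constant, we may assume $w^{(0)} = 0$. Thus $\widehat{w}$ takes the form (\ref{widew}) required in Assumption~\ref{assum:cond}(ii), and the diagram (\ref{dia:PT}) is a formal d-critical simple flip in the sense of Definition~\ref{defi:dsflip}, verifying Assumption~\ref{assum:cond}(i).

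Next I would verify Assumption~\ref{assum:cond}(ii). The coefficients $a_{ijk}$ in (\ref{wij(1)}) are, by formula (\ref{w:linear}), identified up to the factor $1/2$ with the triple product $(x_i^{\vee} \cdot y_j^{\vee} \cdot u_k^{\vee})$, which is the Serre-dual of the composition map $\Ext^1_X(E_1,E_2) \otimes \Ext^1_X(E_2,E_1) \to \Ext^2_X(E_2,E_2)$. Concretely, the bilinear map $\psi$ of (\ref{psi}) is dual to this composition map, so $\psi(\vec\alpha,-)$ and $\psi(-,\vec\beta)$ are injective if and only if the composition is injective on each factor. But this is exactly the content of Lemma~\ref{lem:inj}. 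Assumption~\ref{assum:cond}(iii) is supplied directly by Lemma~\ref{lem:O1}, which produces the $\pi^+$-ample line bundle $\oO_P(1)$ on $P_n^{\circ}(X,\beta)$ with the required compatibility (\ref{isom:line}) under $\iota^{+}$.

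Once Assumption~\ref{assum:cond} is verified, Theorem~\ref{thm:main} applies. Lemma~\ref{lem:smooth} then guarantees that $P_{\pm n}^{\circ}(X,\beta)$ are smooth of dimensions $\pm n + g - 1$, while Theorem~\ref{thm:main}(i), (ii), (iii) give precisely the three statements of Theorem~\ref{thm:PTpair}: fully-faithfulness of the Fourier-Mukai functor $\Phi_P$ with kernel $\oO_{\wW^{\circ}}$, fully-faithfulness of $\Upsilon_P^i = \dL\pi^{+\ast}(-) \otimes \oO_P(i)$ for $n \geq 1$, and the semiorthogonal decomposition indexed by $i = -n+1, \ldots, 0$ together with $\Imm \Phi_P$.

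The conceptual obstacles have essentially all been resolved in the preceding sections. The only substantive check is the identification of the bilinear form $\psi$ with the Ext-composition (via the cyclic $A_\infty$ minimal model referenced after (\ref{formal:w})), so that Lemma~\ref{lem:inj} applies cleanly; I expect the main bookkeeping to lie in correctly matching the parameters $a$, $b$, $g$ at each point $p$ with $\dim H^0(F)$, $\dim H^1(F)$, $\dim \Ext^1_X(F,F)$, and in confirming that the locally defined $\oO_P(1)$ from Lemma~\ref{lem:O1} is globally $\pi^+$-ample on $P_n^{\circ}(X,\beta)$ before invoking Theorem~\ref{thm:main}.
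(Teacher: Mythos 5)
Your proposal matches the paper's proof essentially verbatim: the paper also reduces to Theorem~\ref{thm:main} by checking Assumption~\ref{assum:cond} via the same triple of ingredients (Theorem~\ref{thm:Pflip} plus the smoothness of $U_n^\circ$ to kill $w^{(0)}$, Lemma~\ref{lem:inj} for the nondegeneracy of $\psi$, Lemma~\ref{lem:O1} for the relative ample line bundle), and then reads off the three conclusions. The only addition you supply is a slightly more explicit unwinding of the Serre-duality identification between $\psi$ and the $\Ext$-composition, which the paper leaves implicit; this is fine and correct.
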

\begin{proof}
We show that the diagram (\ref{dia:PT}) satisfies 
Assumption~\ref{assum:cond}. 
Let us take $p \in U_n^{\circ}(X, \beta)$
corresponding to a pure one dimensional sheaf $F$. 
The assumption that $U_n^{\circ}(X, \beta)$ is smooth and the
bottom left isomorphism in the diagram 
(\ref{dia:YZ}) indicate that, for 
the formal function $\widehat{w}$
written as (\ref{formal:w}), we 
may assume that $w^{(0)}(\vec{u})=0$. 
Then
Assumption~\ref{assum:cond} (i) follows 
from Theorem~\ref{thm:Pflip}, (ii) 
follows from Lemma~\ref{lem:inj} and (iii)
follows from Lemma~\ref{lem:O1}. 
Therefore theorem follows from Theorem~\ref{thm:main}. 
\end{proof}

\begin{rmk}\label{rmk:CY}
When $X$ is a non-compact CY 3-fold, 
suppose that $X$ has a smooth compactification 
$X \subset \overline{X}$
such that $H^i(\oO_{\overline{X}})=0$ for $i=1, 2$. 
Then the result of Theorem~\ref{thm:PTpair} also holds 
in this case 
without any modification, by
replacing $X$ with $\overline{X}$. 
This is because for $E_1=\oO_{\overline{X}}$ and 
$E_2=F[-1]$ where the support of $F$ is contained in $X$, 
we have the perfect pairing 
\begin{align*}
\Ext_X^1(E_i, E_j) \otimes \Ext_X^2(E_j, E_i) \to \mathbb{C}
\end{align*}
by the CY3 condition of $X$ and 
the vanishing $H^i(\oO_{\overline{X}})=0$ for $i=1, 2$.
\end{rmk}

\subsection{Stable pairs on local surfaces}
We apply Theorem~\ref{thm:PTpair} to some local surfaces. 
Let $S$ be a smooth projective surface 
satisfying $H^i(\oO_S)=0$ for $i=1, 2$. 
We consider the non-compact CY 3-fold
\begin{align*}
X =\mathrm{Tot}_S(K_S).
\end{align*}
We will apply Theorem~\ref{thm:PTpair}
to show the existence of 
SOD of relative Hilbert schemes of points on
the universal curve over a complete linear system.  

Let us take $\beta \in H_2(S, \mathbb{Z})=H^2(S, \mathbb{Z})$
such that $-K_S \cdot \beta>0$. 
By the assumption $H^i(\oO_S)=0$ for $i=1, 2$, there is unique 
$L \in \Pic(S)$ such that 
$c_1(L)=\beta$. 
Let
$\lvert L \rvert^{\circ} \subset \lvert L \rvert$
be the open subset consisting of 
irreducible curves and 
\begin{align*}
\pi \colon \cC \to \lvert L \rvert^{\circ}
\end{align*}
the universal curve. 
Note that any member $C \in \lvert L \vert^{\circ}$ has 
the following arithmetic genus
\begin{align*}
g=1+\frac{1}{2}(\beta^2+K_S \cdot \beta).
\end{align*}
We have the following diagram
\begin{align*}
\xymatrix{
\cC^{[n]} \ar[rd]_-{\pi^{[n]}} & & J_n \ar[ld]^-{\pi_J} \\
& \lvert L \rvert^{\circ}. 
}
\end{align*}
Here $\pi^{[n]}$ is the $\pi$-relative Hilbert scheme of $n$-points, 
and $\pi_J$ is the $\pi$-relative rank one torsion free sheaves
on the fibers of $\pi$ with Euler characteristic $n$. 
Let $i \colon S \hookrightarrow X$ be the zero section.
We have the following lemma: 
\begin{lem}\label{lem:locsurface}
(i) We have isomorphisms
\begin{align*}
\cC^{[n+g-1]} \stackrel{\cong}{\to} P_n^{\circ}(S, \beta) \stackrel{\cong}{\to} P_n^{\circ}(X, i_{\ast}\beta)
\end{align*}
and they are non-singular. 

(ii) We have isomorphisms
\begin{align*}
J_n \stackrel{\cong}{\to} U_n^{\circ}(S, \beta) \stackrel{\cong}{\to}
U_n^{\circ}(X, i_{\ast}\beta)
\end{align*}
and they are non-singular. 
\end{lem}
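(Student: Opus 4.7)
The plan is to reduce everything to the surface $S$ by showing that stable pairs and stable sheaves on $X$ with irreducible fundamental cycle of class $i_{\ast}\beta$ are scheme-theoretically supported on the zero section, and then invoke the PT--Hilbert correspondence together with Serre duality on $S$ for the non-singularity claims.

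For the reduction, I would use that $X = \mathrm{Tot}_S(K_S) = \SPEC_S \Sym^{\bullet}K_S^{\vee}$, so a compactly supported coherent sheaf $F$ on $X$ is equivalent to a pair $(G, \phi)$ consisting of a coherent sheaf $G \cneq \pi_{\ast}F$ on $S$ together with an $\oO_S$-linear Higgs-type map $\phi \colon G \to G\otimes K_S$ (multiplication by the tautological fibre coordinate), with $F$ scheme-theoretically supported on $i(S)$ iff $\phi = 0$. Given $[F]=i_{\ast}\beta$ with irreducible fundamental cycle, $G$ is supported on an integral curve $D\subset S$ of class $\beta/d$ for some $d\ge 1$, of generic rank $d$ along $D$. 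The coefficients of the characteristic polynomial of $\phi$ lie in $H^0(D, K_S^k|_D)$, which all vanish because $\deg K_S|_D = (K_S\cdot\beta)/d < 0$ and $D$ is integral, so $\phi$ is generically nilpotent. Irreducibility of the fundamental cycle (not merely of the support) then forces $d=1$ and excludes nontrivial thickenings of $i(D)$, so $F = i_{\ast}F'$ for some $F'$ on $S$ supported on an integral curve in $|L|^{\circ}$. Adjunction $\Hom_X(\oO_X, i_{\ast}F') = \Hom_S(\oO_S, F')$ then delivers $P_n^{\circ}(X, i_{\ast}\beta) \cong P_n^{\circ}(S,\beta)$ and $U_n^{\circ}(X, i_{\ast}\beta) \cong U_n^{\circ}(S,\beta)$.

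The remaining identifications $\cC^{[n+g-1]}\cong P_n^{\circ}(S,\beta)$ and $J_n\cong U_n^{\circ}(S,\beta)$ are the standard PT--Hilbert correspondence \cite{PT} restricted to integral supporting curves (which are Gorenstein, being Cartier divisors in the smooth surface $S$) and the defining parametrization of $J_n$: a stable pair with support $C \in |L|^{\circ}$ corresponds to the $0$-dimensional cokernel $Z = \Cok(\oO_C \hookrightarrow F) \subset C$ of length $n-\chi(\oO_C)=n+g-1$, while irreducibility reduces Gieseker stability to torsion-freeness of rank one. For non-singularity, Serre duality on $S$ at $F' \in U_n^{\circ}(S, \beta)$ supported on an integral $D$ gives
\begin{align*}
\Ext_S^2(F', F') \cong \Hom_S(F', F'\otimes K_S)^{\vee} \cong H^0(D, K_S|_D)^{\vee} = 0
\end{align*}
because $F'$ is rank-one torsion-free on $D$ and $\deg K_S|_D < 0$; this kills the obstruction and proves smoothness of $U_n^{\circ}(S,\beta)\cong J_n$. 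For $P_n^{\circ}(S,\beta)$, the analogous vanishing $\Ext_S^2(I^{\bullet}, I^{\bullet})_0 = 0$ at $I^{\bullet}=[\oO_S\to F]$ follows from the same computation together with $H^i(\oO_S)=0$ for $i=1,2$, applied to the triangle $F[-1]\to I^{\bullet}\to \oO_S$, yielding smoothness of $P_n^{\circ}(S,\beta)\cong \cC^{[n+g-1]}$.

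The hardest step will be the scheme-theoretic support claim in the reduction: carrying out the characteristic-polynomial and nilpotency argument rigorously at points where $D$ is singular and $G$ may fail to be locally free, and leveraging irreducibility of the fundamental cycle (rather than of the underlying set-theoretic support) to exclude nontrivial scheme-theoretic thickenings of $i(D)$. Everything else amounts to well-known Gorenstein duality and the PT--Hilbert correspondence, restricted to integral fibres.
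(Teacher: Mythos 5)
Your proposal is essentially correct and takes a genuinely different, more self-contained route than the paper for the reduction to the zero section. The paper handles the identification $P_n^{\circ}(S,\beta)\cong P_n^{\circ}(X,i_{\ast}\beta)$ in two steps: a quick set-theoretic observation (an irreducible compactly supported curve in class $i_{\ast}\beta$ must lie in the zero section, because the nonzero coefficients of the monic equation defining the associated spectral curve over $D \subset S$ would be nonzero sections of $K_S^{\otimes k}|_D$, which is impossible when $-K_S\cdot\beta>0$), followed by a citation to an external proposition for the scheme-theoretic and functorial identification; similarly it cites~\cite{PT3} and~\cite{MT} for the PT--Hilbert correspondence and the compactified Jacobian identification. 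You instead unpack the spectral/BNR correspondence explicitly, use the characteristic polynomial vanishing to reduce to nilpotent $\phi$, then invoke irreducibility of the fundamental cycle (generic length one) to kill both $d>1$ and nontrivial scheme-theoretic thickenings, and finally conclude $\phi=0$. This buys self-containedness at the cost of having to be careful with Cayley--Hamilton for non-locally-free $G$ over singular $D$, which you correctly flag as the delicate step; the cleanest fix is to note that once $\phi$ is generically zero on a generically rank-one pure sheaf $G$ over integral $D$, its image is a dimension-zero subsheaf of the pure one-dimensional sheaf $G\otimes K_S$ and hence vanishes, avoiding characteristic polynomials at singular points entirely.

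Two small remarks. First, your argument as written is pointwise, whereas the statement is an isomorphism of moduli schemes; to get the scheme-theoretic identification one either promotes the spectral-cover argument to families (comparing the two moduli functors, which is what the reference the paper cites accomplishes) or observes that the morphism $P_n^{\circ}(S,\beta)\to P_n^{\circ}(X,i_{\ast}\beta)$ is a bijection between schemes that are both smooth of the same dimension, hence an isomorphism (the paper's Serre duality and~\cite{PT3} computations give smoothness on the $S$ side, and the CY3 obstruction theory plus your vanishings give it on the $X$ side). Second, the smoothness computations you give via Serre duality and the triangle $F[-1]\to I^{\bullet}\to\oO_S$ match the paper's; for the $\Hom_S(F',F'\otimes K_S)=0$ step at singular $D$ one should argue via a degree estimate (a nonzero twisted endomorphism would be generically an isomorphism, hence injective on the pure sheaf $F'$, contradicting $\deg K_S|_D<0$) rather than asserting $\eE nd_D(F')\cong\oO_D$, which can fail when $F'$ is not locally free.
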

\begin{proof}
As for (i), the isomorphism 
$\cC^{[n+g-1]} \stackrel{\cong}{\to} 
P_n^{\circ}(S, \beta)$
and the smoothness of 
$P_n^{\circ}(S, \beta)$ follow from the argument 
of~\cite[Proposition~B.8, Proposition~C.2]{PT3}.
The assumption $-K_S \cdot \beta>0$ implies that 
any compactly supported irreducible curve on $X$ with 
homology class $i_{\ast}\beta$ must lie on 
the zero section $S \subset X$. 
Therefore we have 
the set theoretic bijection
$P_n^{\circ}(S, \beta) \to
P_n^{\circ}(X, i_{\ast}\beta)$, and they 
have the same scheme structures 
by~\cite[Proposition~3.4]{MR3238154}. 

As for (ii), the smoothness 
of $U_n^{\circ}(S, \beta)$ follows from 
\begin{align*}
\Ext_S^2(F, F)=\Hom(F, F \otimes \oO_S(K_S))^{\vee}=0
\end{align*}
for a sheaf $F$ corresponding to a point in $U_n(X, \beta)$,
by the Serre duality and the assumption 
$-K_S \cdot \beta>0$. 
The isomorphism 
$J_n \stackrel{\cong}{\to} U_n^{\circ}(S, \beta)$
follows 
from the argument in~\cite[Subsection~5.3]{MT}, and the 
isomorphism 
$U_n^{\circ}(S, \beta) \stackrel{\cong}{\to}
U_n^{\circ}(X, i_{\ast}\beta)$
follows similarly to (i). 
\end{proof}
By Lemma~\ref{lem:locsurface},
the diagram (\ref{dia:PT}) in this case 
is
\begin{align}\label{dia:PT:loc}
\xymatrix{
\cC^{[n+g-1]} \ar[rd]_-{\pi^+} & & \ar[ld]^-{\pi^-} \cC^{[-n+g-1]} \\
& J_n. &
}
\end{align}
Applying Theorem~\ref{thm:PTpair}
to $X=\mathrm{Tot}_S(K_S)$ 
and noting Remark~\ref{rmk:CY}, 
we have the following: 
\begin{cor}\label{cor:locsurface}
For each $n\ge 0$, we have the SOD
\begin{align*}
D^b(\cC^{[n+g-1]})=
\langle \overbrace{D^b(J_{n}), \ldots, 
D^b(J_{n})}^{n}, D^b(\cC^{[-n+g-1]}) \rangle. 
\end{align*}
\end{cor}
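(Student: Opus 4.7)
The plan is to invoke Theorem~\ref{thm:PTpair} for the non-compact CY 3-fold $X=\mathrm{Tot}_S(K_S)$ via Remark~\ref{rmk:CY}, and then translate the resulting SOD through the explicit isomorphisms supplied by Lemma~\ref{lem:locsurface}. Essentially no new content needs to be produced, since all the substantive work has been packaged into those two results.

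First I would verify the compactification hypothesis in Remark~\ref{rmk:CY}. The natural choice is the $\mathbb{P}^1$-bundle
\begin{align*}
\overline{X} \cneq \mathbb{P}_S(K_S \oplus \oO_S) \supset \mathrm{Tot}_S(K_S)=X,
\end{align*}
which is a smooth projective 3-fold containing $X$ as the complement of the section at infinity. For the projection $p\colon \overline{X}\to S$ one has $p_{\ast}\oO_{\overline{X}}=\oO_S$ and $R^{i}p_{\ast}\oO_{\overline{X}}=0$ for $i>0$, so combined with the assumption $H^i(\oO_S)=0$ for $i=1,2$, the Leray spectral sequence yields $H^i(\oO_{\overline{X}})=0$ for $i=1,2$, as required.

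Next I would use Lemma~\ref{lem:locsurface} to produce the identifications
\begin{align*}
\cC^{[\pm n+g-1]} \cong P_{\pm n}^{\circ}(X, i_{\ast}\beta), \qquad J_n \cong U_n^{\circ}(X, i_{\ast}\beta),
\end{align*}
under which the Abel--Jacobi morphisms of diagram (\ref{dia:PT:loc}) correspond to the wall-crossing maps $\pi^{\pm}$ of diagram (\ref{dia:PT}). In particular $U_n^{\circ}(X, i_{\ast}\beta)$ is non-singular, so the hypothesis of Theorem~\ref{thm:PTpair} is met. Applying that theorem (in the non-compact form enabled by Remark~\ref{rmk:CY}) furnishes the SOD
\begin{align*}
D^b(P_n^{\circ}(X, i_{\ast}\beta))
=\langle \Imm \Upsilon_P^{-n+1}, \ldots, \Imm \Upsilon_P^{0}, \Imm \Phi_P \rangle,
\end{align*}
where each $\Upsilon_P^{i}$ is fully-faithful with source $D^b(U_n^{\circ}(X, i_{\ast}\beta))$ and $\Phi_P$ is fully-faithful with source $D^b(P_{-n}^{\circ}(X, i_{\ast}\beta))$. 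Pulling this decomposition back along the isomorphisms of Lemma~\ref{lem:locsurface} then gives the claimed SOD of $D^b(\cC^{[n+g-1]})$, with $n$ copies of $D^b(J_n)$ and one copy of $D^b(\cC^{[-n+g-1]})$.

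There is no serious obstacle to this proof: the only point that warrants explicit verification is the cohomology vanishing for $\overline{X}$, which the $\mathbb{P}^1$-bundle structure dispatches immediately. The degenerate case $n=0$ reduces to the assertion that $\Phi_P$ is then an equivalence (with no $\Upsilon_P^{i}$ summands appearing), which is vacuous from the SOD of Theorem~\ref{thm:PTpair}.
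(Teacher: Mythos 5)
Your proposal is correct and follows exactly the route the paper takes: identify the Abel--Jacobi diagram with the stable-pair wall-crossing diagram via Lemma~\ref{lem:locsurface}, then apply Theorem~\ref{thm:PTpair} to $X=\mathrm{Tot}_S(K_S)$ via Remark~\ref{rmk:CY}. The only addition is your explicit verification that the compactification $\overline{X}=\mathbb{P}_S(K_S\oplus\oO_S)$ satisfies $H^i(\oO_{\overline{X}})=0$ for $i=1,2$, a detail the paper leaves implicit.
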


\subsection{SOD of symmetric product of curves}
Let $C$ be a smooth projective curve
over $\mathbb{C}$ of genus $g$. Its $k$-fold 
symmetric product $C^{[k]}$ is defined by 
\begin{align*}
C^{[k]} \cneq \overbrace{(C \times \cdots \times C)}^k/\mathfrak{S}_k
\end{align*}
where the action of the symmetric group $\mathfrak{S}_k$ is given by 
the permutation. 
The variety $C^{[k]}$ is a smooth projective variety of dimension $k$, and 
identified with the Hilbert scheme of $k$-points on $C$. 

Let $\pP ic^k(C)$ be the moduli space of degree $k$ line bundles on $C$, which is a $g$-dimensional complex torus. 
Once we fix a point $c \in C$, 
we have the isomorphism
\begin{align}\label{isom:pic}
\pP ic^k(C) \stackrel{\cong}{\to} J_C \cneq \pP ic^0(C)
\end{align}
which sends $[L] \in \pP ic^k(C)$ to 
$[L(-kc)] \in J_C$. 
Below we fix the above isomorphisms for each 
$k \in \mathbb{Z}$. 
We also have the Abel-Jacobi map 
\begin{align}\label{intro:AJ}
\mathrm{AJ} \colon 
C^{[k]} \to \pP ic^k(C)
\end{align}
which sends a length $k$ subscheme 
$Z \subset C$
to the line bundle $\oO_C(Z)$. 
\begin{rmk}\label{rmk:AJ}
For $k>2g-2$, the map (\ref{intro:AJ})
is a projective bundle. In general, 
the map (\ref{intro:AJ}) is a stratified projective bundle, 
where stratas on $\Pic^k(C)$ are given by 
Brill-Noether loci. 
The geometry of Brill-Noether loci is complicated and depends on 
the complex structure of $C$, whose study is a classical subject 
on the study of symmetric products of curves 
(see~\cite[Section~5]{Flamini}, \cite[Example~1.0.7-1.0.10]{MR3114949}). 
\end{rmk}

For $n\ge 0$, we consider the following 
diagram
\begin{align}\label{diagram:sym}
\xymatrix{
C^{[n+g-1]} \ar[rd]_-{\mathrm{AJ}} &  & 
C^{[-n+g-1]} \ar[ld]^-{\mathrm{AJ}^{\vee}} \\
&  \pP ic^{n+g-1}(C). &
}
\end{align}
Here $\mathrm{AJ}^{\vee}$ 
sends $Z \subset C$ to $\omega_C(-Z)$. 
Applying Theorem~\ref{thm:PTpair}
and using the isomorphism (\ref{isom:pic}), 
we obtain the following corollary:

\begin{cor}\label{thm:sod:sym}
For each $n\ge 0$, we have the SOD
\begin{align*}
D^b(C^{[n+g-1]})=
\langle \overbrace{D^b(J_C), \ldots, 
D^b(J_C)}^{n}, D^b(C^{[-n+g-1]}) \rangle. 
\end{align*}
\end{cor}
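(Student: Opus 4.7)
The plan is to realize the diagram (\ref{diagram:sym}) as a special case of the PT wall-crossing diagram (\ref{dia:PT}) and apply Theorem~\ref{thm:PTpair}. I would take the non-compact CY 3-fold
\begin{align*}
X = \mathrm{Tot}_C(L_1 \oplus L_2), \quad L_1, L_2 \in \Pic(C), \quad L_1 \otimes L_2 \cong \omega_C,
\end{align*}
with $L_1$ and $L_2$ chosen generically so that $H^0(C, L_1) = H^0(C, L_2) = 0$. Let $i \colon C \hookrightarrow X$ denote the zero section and set $\beta \cneq i_{\ast}[C]$.

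First I would identify the moduli spaces entering (\ref{dia:PT}) for this $X$. For generic $L_i$, the curve $C$ admits no compactly supported deformation inside $X$, so any stable pair $(F, s)$ with $[F] = \beta$ is set-theoretically supported on $C$. A purity argument in the spirit of Lemma~\ref{lem:locsurface}, combined with $H^0(L_i) = 0$, then forces $F$ to be scheme-theoretically supported on $C$, reducing $(F, s)$ to a line bundle $\oO_C(D)$ for an effective divisor $D \subset C$ of degree $n+g-1$. This identifies $P_n^{\circ}(X, \beta) \cong C^{[n+g-1]}$ and, similarly, $U_n^{\circ}(X, \beta) \cong \Pic^{n+g-1}(C) \cong J_C$ via (\ref{isom:pic}). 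Under these identifications $\pi^+ \colon (F, s) \mapsto F$ becomes the Abel-Jacobi map $\mathrm{AJ}$, while $\pi^- \colon (F', s') \mapsto \eE xt^2_X(F', \oO_X)$ becomes $\mathrm{AJ}^{\vee}$ via local Grothendieck duality $\eE xt^2_X(i_{\ast} L, \oO_X) \cong i_{\ast}(\omega_C \otimes L^{-1})$, using $i^{!}\oO_X \cong \omega_C[-2]$ (the CY3 condition of $X$ near $C$). Thus the diagram (\ref{dia:PT}) for this $X$ coincides with (\ref{diagram:sym}).

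Since $J_C$ is smooth of dimension $g$, the smoothness hypothesis on $U_n^{\circ}(X, \beta)$ in Theorem~\ref{thm:PTpair} is verified, and the desired SOD for $D^b(C^{[n+g-1]})$ follows from that theorem combined with the isomorphism $\Pic^{n+g-1}(C) \cong J_C$. The subtle point is that Theorem~\ref{thm:PTpair} is stated for projective CY 3-folds and Remark~\ref{rmk:CY} extends it only to non-compact $X$ admitting a smooth compactification $\overline{X}$ with $H^i(\oO_{\overline{X}}) = 0$ for $i = 1, 2$; for $g \ge 1$ no such compactification of our $X$ exists. However, the proof of Theorem~\ref{thm:PTpair} ultimately reduces to the formally local Theorem~\ref{thm:main} via Assumption~\ref{assum:cond}. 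Each of its three ingredients (Theorem~\ref{thm:Pflip}, Lemma~\ref{lem:inj}, Lemma~\ref{lem:O1}) uses only Serre duality for compactly supported sheaves and the local cyclic $A_\infty$-structure on the two-object subcategory $\langle \oO_X, F[-1] \rangle \subset D^b(X)$, both of which are available on the local CY 3-fold $X$. I would reproduce these three ingredients directly in the non-compact setting and then apply Theorem~\ref{thm:main}.

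The main obstacle is precisely this extension of Theorem~\ref{thm:Pflip} to the non-compact $X = \mathrm{Tot}_C(L_1 \oplus L_2)$: one must re-examine the construction of the formal potential $\widehat{w}$ at $(0,0) \in Z_U$ to confirm that every appeal to Serre duality is of the compactly supported form $\Ext^3_X(F[-1], F[-1]) \cong \mathbb{C}$, and that the minimal cyclic $A_\infty$-model can be built without any projectivity assumption on the ambient threefold. Granting this, the injectivity condition (\ref{psi}) of Assumption~\ref{assum:cond} (ii) follows verbatim from Lemma~\ref{lem:inj}, whose proof uses only Grothendieck duality on the Cohen-Macaulay curve $C$, while Assumption~\ref{assum:cond} (iii) is supplied by the projectivity of the Abel-Jacobi map $\mathrm{AJ} \colon C^{[n+g-1]} \to \Pic^{n+g-1}(C)$ exactly as in Lemma~\ref{lem:O1}. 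Everything else (the identification of the kernel as $\oO_{C^{[-n+g-1]} \times_{J_C} C^{[n+g-1]}}$ and the identification of the $\Upsilon$-pieces with $\dL\mathrm{AJ}^{\ast}$-pullbacks twisted by powers of the relative ample bundle) is then automatic from Theorem~\ref{thm:main}.
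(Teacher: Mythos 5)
Your proposal follows essentially the same strategy as the paper: take $X=\mathrm{Tot}_C(L_1\oplus L_2)$ with generic degree-$(g-1)$ line bundles satisfying $L_1\otimes L_2\cong\omega_C$, identify the PT wall-crossing diagram (\ref{dia:PT}) with the Abel--Jacobi diagram (\ref{diagram:sym}), and invoke the machinery of Theorem~\ref{thm:PTpair} after checking that Theorem~\ref{thm:Pflip} extends to this non-compact setting. You correctly flag that Remark~\ref{rmk:CY} does not directly apply (no suitable compactification when $g\ge 1$); the paper handles precisely this point by citing [Toddbir, Example~9.22, Remark~9.23], which is the content you propose to reproduce, so your more detailed verification is a sound elaboration of the same argument rather than a different route.
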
 
\begin{proof}
Let 
$X$ be the non-compact CY 3-fold
\begin{align*}
X=\mathrm{Tot}_C(L_1 \oplus L_2)
\end{align*}
where $L_1$, $L_2$ are general line bundles 
of degree $g-1$ satisfying $L_1 \otimes L_2 \cong \omega_C$.
Then the diagram (\ref{dia:PT}) in this 
case coincides with the diagram (\ref{diagram:sym}).
As mentioned in~\cite[Example~9.22, Remark~9.23]{Toddbir}, 
the result of Theorem~\ref{thm:Pflip} applies 
to the non-compact CY 3-fold $X$. 
Therefore the result 
follows by the argument of Theorem~\ref{thm:PTpair}
and isomorphisms (\ref{isom:pic}). 
\end{proof}

For $n=0$, the images of
$\mathrm{AJ}$ and $\mathrm{AJ}^{\vee}$ coincide with
the theta divisor
\begin{align*}
\Theta \cneq \{ [L] \in \pP ic^{g-1}(C) : 
h^0(L) \neq 0\} \subset \pP ic^{g-1}(C)
\end{align*}
which is singular in general, but 
has only rational singularities~\cite{MR0349687}. 
So we have the diagram
\begin{align*}
\xymatrix{
C^{[g-1]} \ar[rd]_-{\mathrm{AJ}}  \ar@{.>}[rr]
& & C^{[g-1]} \ar[ld]^-{\mathrm{AJ}^{\vee}} \\
& \Theta &
}
\end{align*}
which gives a (possibly non-isomorphic) 
resolutions of $\Theta$. 
Let $\wW$ be the fiber 
product of the above diagram. 
Applying Theorem~\ref{thm:PTpair}
as in the proof of Corollary~\ref{thm:sod:sym} for $n=0$, 
we have the following:  
\begin{cor}\label{cor:symequiv}
We have the autequivalence
\begin{align}\label{cor:S}
\Phi^{\oO_{\wW}} \colon 
D^b(C^{[g-1]}) \stackrel{\sim}{\to}
D^b(C^{[g-1]}).
\end{align}
\end{cor}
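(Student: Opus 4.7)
The plan is to obtain the statement as the $n=0$ specialization of the argument behind Corollary~\ref{thm:sod:sym}, namely as an application of Theorem~\ref{thm:PTpair} to the non-compact CY 3-fold $X = \mathrm{Tot}_C(L_1 \oplus L_2)$ with generic $L_i$ satisfying $L_1 \otimes L_2 \cong \omega_C$. The key observation is that when $n=0$ both stable pair moduli spaces $P_{\pm n}^{\circ}(X, i_{\ast}\beta)$ appearing in diagram (\ref{dia:PT}) coincide with $C^{[g-1]}$, so the kernel $\oO_{\wW^{\circ}}$ supplied by Theorem~\ref{thm:PTpair} defines an endofunctor of $D^b(C^{[g-1]})$. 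Moreover, since $\mathrm{AJ}$ and $\mathrm{AJ}^{\vee}$ both factor through the closed embedding $\Theta \hookrightarrow \pP ic^{g-1}(C) = U_0^{\circ}(X, i_{\ast}\beta)$, the scheme $\wW^{\circ}$ of Theorem~\ref{thm:PTpair} agrees with the $\wW$ in the statement, so the two functors $\Phi^{\oO_{\wW^{\circ}}}$ and $\Phi^{\oO_{\wW}}$ coincide.

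First I would verify the non-singularity hypothesis of Theorem~\ref{thm:PTpair}: the moduli space $U_0^{\circ}(X, i_{\ast}\beta)$ is identified with $\pP ic^{g-1}(C)$, which is a smooth complex torus (this is the curve analogue of Lemma~\ref{lem:locsurface}). The fact that Theorem~\ref{thm:PTpair} applies to this non-compact threefold is already addressed by Remark~\ref{rmk:CY} together with the reference to \cite[Example~9.22, Remark~9.23]{Toddbir} invoked in the proof of Corollary~\ref{thm:sod:sym}.

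With the hypothesis in place, Theorem~\ref{thm:PTpair}(i) immediately yields that $\Phi^{\oO_{\wW}}$ is fully-faithful. To upgrade this to an equivalence I would invoke Theorem~\ref{thm:PTpair}(iii), which reads
$$D^b(C^{[g-1]}) = \langle \Imm \Upsilon_P^{-n+1}, \ldots, \Imm \Upsilon_P^{0}, \Imm \Phi_P \rangle.$$
For $n = 0$ the index range $-n+1,\ldots,0$ equals $1,\ldots,0$, which is empty, so the SOD collapses to $D^b(C^{[g-1]}) = \langle \Imm \Phi_P \rangle$. Combined with full-faithfulness this forces essential surjectivity, hence the desired autoequivalence (\ref{cor:S}).

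I do not foresee a genuine obstacle: the substantive work is already contained in Theorem~\ref{thm:PTpair}, and what remains is essentially bookkeeping, namely the schematic identification $\wW^{\circ} = \wW$ and the observation that the window of indices in part (iii) of the theorem is empty when $n=0$. The only mild subtlety worth a sanity check is that $\wW$ as defined via the (set-theoretic) image $\Theta$ agrees scheme-theoretically with the fiber product over the ambient smooth torus $\pP ic^{g-1}(C)$, which is automatic since $\Theta \hookrightarrow \pP ic^{g-1}(C)$ is a closed immersion and both $\mathrm{AJ}$, $\mathrm{AJ}^{\vee}$ factor through it.
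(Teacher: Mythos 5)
Your proof is correct and takes essentially the same route as the paper: the paper's own (very terse) argument is just "apply Theorem~\ref{thm:PTpair} as in the proof of Corollary~\ref{thm:sod:sym} with $n=0$", and you have unfolded exactly this — including the two small bookkeeping points the paper leaves implicit, namely that the index range $-n+1,\ldots,0$ is empty when $n=0$ so the SOD reduces to $\langle \Imm\Phi_P\rangle$, and that the fiber product over $\Theta$ coincides with the fiber product over $\pP ic^{g-1}(C)=U_0^{\circ}(X,i_{\ast}\beta)$ because $\Theta\hookrightarrow\pP ic^{g-1}(C)$ is a monomorphism through which both $\mathrm{AJ}$ and $\mathrm{AJ}^{\vee}$ factor.
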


Below we give some examples on Corollary~\ref{thm:sod:sym}
and Corollary~\ref{cor:symequiv}. 
\begin{exam} Suppose that $n>g-1$. Then 
$C^{[-n+g-1]}=\emptyset$ and 
\begin{align*}
\mathrm{AJ} \colon C^{[n+g-1]} \to \pP ic^{n+g-1}(C)
\end{align*}
is a projective bundle whose fibers are
$\mathbb{P}^{n-1}$. 
Then the 
SOD in Theorem~\ref{thm:sod:sym} is
\begin{align*}
D^b(C^{[n+g-1]})=
\langle \overbrace{D^b(J_C), \ldots, 
D^b(J_C)}^{n}
 \rangle
\end{align*}
which is nothing but Orlov's SOD 
for projective bundles~\cite{MR1208153}. 
\end{exam}

\begin{exam}
Suppose that $n=g-1$. 
Then $C^{[-n+g-1]}=\Spec \mathbb{C}$
and 
\begin{align*}
\mathrm{AJ} \colon C^{[2g-2]} \to \pP ic^{2g-2}(C)
\end{align*}
is a projective bundle 
outside the point $[\omega_C] \in \pP ic^{2g-2}(C)$. 
For the fiber $F=\mathrm{AJ}^{-1}([\omega_C])$, 
its structure sheaf $\oO_F$ is exceptional, and the 
SOD in Theorem~\ref{thm:sod:sym}
is
\begin{align*}
D^b(C^{[2g-2]})=
\langle \overbrace{D^b(J_C), \ldots, 
D^b(J_C)}^{g-1}, \oO_F \rangle. 
\end{align*}
\end{exam}

\begin{exam}
 Suppose that $n=g-2$. 
Then $C^{[-n+g-1]}=C$ and 
\begin{align*}
\mathrm{AJ} \colon C^{[2g-3]} \to \pP ic^{2g-3}(C)
\end{align*}
is a projective bundle outside 
$\mathrm{AJ}^{\vee}(C) \subset \pP ic^{2g-3}(C)$. 
In this case, the SOD in Theorem~\ref{thm:sod:sym} is
\begin{align*}
D^b(C^{[2g-3]})=
\langle \overbrace{D^b(J_C), \ldots, 
D^b(J_C)}^{g-2}, D^b(C) \rangle. 
\end{align*}
\end{exam}

\begin{exam}
 Suppose that $g=3$ and $n=0$. 
Then the birational map 
\begin{align*}
\mathrm{AJ} \colon C^{[2]} \to \Theta
\end{align*}
is not an isomorphism if and only if 
$C$ is a hyper-elliptic curve (see~\cite[Example~1.0.9]{MR3114949}). 
In this case, the above map contracts a 
$(-2)$-curve on $C^{[2]}$
to a rational double point in $\Theta$. 
The equivalence (\ref{cor:S})
is the spherical twist along with the $(-2)$-curve. 
\end{exam}

\begin{exam}
 Suppose that $g=4$ and $n=1$. Then the birational map
\begin{align*}
\mathrm{AJ} \colon C^{[4]} \to \pP ic^4(C)
\end{align*}
contracts a divisor $E \subset C^{[4]}$
to the surface $\mathrm{AJ}^{\vee}(C^{[2]}) \subset \pP ic^4(C)$
(see~\cite[Example~1.0.10]{MR3114949}). 
If $C$ is not hyperelliptic, then $E$ is 
a $\mathbb{P}^1$-bundle over 
$\mathrm{AJ}^{\vee}(C^{[2]}) \cong C^{[2]}$. 
The SOD in Theorem~\ref{thm:sod:sym} becomes
\begin{align*}
D^b(C^{[4]})=
\langle D^b(J_C), D^b(C^{[2]})\rangle.
\end{align*}
If $C$ is not hyperelliptic, 
the above SOD seems to be the blow-up formula
of derived categories obtained in~\cite{MR1208153}. 
\end{exam}

\begin{exam}
 Suppose that $g=4$ and $n=0$. 
Then the birational map 
\begin{align*}
\mathrm{AJ} \colon C^{[3]} \to \Theta
\end{align*}
is a crepant resolution of $\Theta$ which is 
a divisorial contraction if $C$ is hyperelliptic, 
small resolution which contracts 
one or two smooth rational curves
if $C$ is not hyperelliptic
(see~\cite[Example~1.0.10]{MR3114949}). 
In the latter case, the 
equivalence (\ref{cor:S})
seems to be the 
derived equivalence under flops~\cite{B-O1, Br1}. 
\end{exam}

\section{Categorification of Kawai-Yoshioka formula}\label{sec:KY}
In this section, we prove Theorem~\ref{intro:thm2} as 
another application 
of Theorem~\ref{thm:main}.
We use Kawai-Yoshioka's diagram~\cite{KY} 
relating moduli spaces of stable pairs on K3 surfaces
with moduli spaces of stable sheaves on them. 
The key ingredient, 
which was essentially observed in~\cite{TodK3}, is to interpret 
Kawai-Yoshioka's diagram in terms of wall-crossing diagram in a
CY 3-fold defined by 
the product of the K3 surface and an elliptic curve.
\subsection{SOD of relative Hilbert schemes of points}
Let $S$ be a smooth projective K3 surface such that
\begin{align*}
\Pic(S)=\mathbb{Z}[\oO_S(H)]
\end{align*}
for an ample divisor $H$ on $S$. 
Let $g \in \mathbb{Z}$ be 
defined by $H^2=2g-2$. 
We have the complete linear system
$\vert H \rvert$ and the universal curve
\begin{align*}
\pi \colon 
\cC \to 
\lvert H \rvert=\mathbb{P}^g.
\end{align*}
In what follows, we fix 
$n \ge 0$. Let 
\begin{align}\label{rel:hilb}
\cC^{[n+g-1]}
\to \mathbb{P}^g
\end{align}
be the $\pi$-relative Hilbert scheme of $(n+g-1)$-points on $\cC$. 
As in Lemma~\ref{lem:locsurface}, 
the $\pi$-relative Hilbert scheme (\ref{rel:hilb})
is isomorphic to the 
the moduli space of Pandharipande-Thomas stable 
pair moduli space $P_n(S, [H])$ on $S$.

Let $\Gamma_S$ be the \textit{Mukai lattice} of $S$
\begin{align*}
\Gamma_S \cneq H^0(S, \mathbb{Z}) \oplus \mathbb{Z}[H] \oplus 
H^4(S, \mathbb{Z}). 
\end{align*}
For $E \in D^b(S)$ 
its \textit{Mukai vector} is defined by
\begin{align*}
v(E) \cneq \ch(E) \cdot \sqrt{\td_S}
 \in \Gamma_S.
\end{align*}
For elements 
$(r_i, \beta_i, m_i) \in \Gamma_S$ with $i=1, 2$, 
the \textit{Mukai pairing} is defined by
\begin{align*}
((r_1, \beta_1, m_1), (r_2, \beta_2, m_2))
 \cneq \beta_1 \beta_2-r_2 m_1 -r_1 m_2.
\end{align*}
For each $k \in \mathbb{Z}_{\ge 0}$, we 
define $U_k$ to be 
the moduli space of $H$-Gieseker stable sheaves
$E$ on $S$
satisfying
\begin{align*}
v(E) =\mathbf{v}_k \cneq (k, [H], k+n). 
\end{align*}
Here we refer to~\cite{HL} for basics on 
moduli spaces of stable sheaves and 
their properties. 
The moduli space $U_k$ is known to be 
a projective 
irreducible holomorphic symplectic manifold of dimension
given by
\begin{align}\notag
\dim U_k &=2+(\mathbf{v}_k, \mathbf{v}_k) \\
\label{dim:Mk}
&= 2(g-k^2-kn).
\end{align}
Let $\pP_k$ be the moduli space of pairs
\begin{align*}
(E, s), \ s \colon \oO_S \to E
\end{align*}
where $[E] \in U_k$ and 
$s$ is a non-zero morphism. 
By~\cite[Lemma~5.117]{KY}, the 
moduli space $\pP_k$ is a smooth projective variety
whose dimension is 
\begin{align}\notag
\dim \pP_k &= 1+\langle \mathbf{v}_{k-1}, \mathbf{v}_k \rangle \\
\label{dim:Pk}
&=2(g-k^2-kn)+2k+n-1.
\end{align}
We have the following diagram (see~\cite[Lemma~5.113]{KY})
\begin{align}\label{dia:P+}
\xymatrix{
\pP_k  \ar[rd]_-{\pi_k^+} & & \ar[ld]^-{\pi_{k}^-} \pP_{k+1} \\
& U_k. &
}
\end{align}
Here $\pi_k^{\pm}$ are defined by
\begin{align*}
\pi_k^+(E, s) \cneq E, \ 
\pi_k^-(E', s') \cneq \Cok(s').
\end{align*}
As an application of Theorem~\ref{thm:main}, 
we have the following result 
whose proof will be given in 
Subsection~\ref{subsec:proofK3}:
\begin{thm}\label{thm:K3}
For $k\ge 0$, we have the following SOD
\begin{align*}
D^b(\pP_k)=\langle \overbrace{D^b(U_k), \ldots, D^b(U_k)}^{n+2k}, D^b(\pP_{k+1}) \rangle.
\end{align*}
\end{thm}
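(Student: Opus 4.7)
The plan is to apply Theorem~\ref{thm:main} to the diagram (\ref{dia:P+}) with $M^{+} = \pP_{k}$, $U = U_{k}$, $M^{-} = \pP_{k+1}$. From (\ref{dim:Mk}) and (\ref{dim:Pk}) one computes
$$\dim \pP_{k} - \dim U_{k} = 2k+n-1, \qquad \dim \pP_{k+1} - \dim U_{k} = -(2k+n)-1,$$
so the integer ``$n$'' playing the role of the parameter in Theorem~\ref{thm:main} is $2k+n$ here. Its conclusion (iii) will then produce exactly $n+2k$ copies of $D^{b}(U_{k})$ followed by $D^{b}(\pP_{k+1})$, which is the SOD we want. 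The entire task reduces to verifying Assumption~\ref{assum:cond} for (\ref{dia:P+}).

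First I would establish Assumption~\ref{assum:cond}(i), the formal d-critical simple flip structure, by the strategy announced in Subsection~\ref{subsec:intro:KY}: pass to the non-compact Calabi--Yau threefold $X = S \times C$ for an elliptic curve $C$. Kawai--Yoshioka's pairs on $S$ lift to $D^{b}(X)$, and the passage $\pP_{k} \leftrightarrow \pP_{k+1}$ is realised there as a Bridgeland wall-crossing, essentially as in the analysis of~\cite{TodK3}. Then, mimicking the proof of Theorem~\ref{thm:Pflip}, at a point $p \in U_{k}$ corresponding to a Gieseker-stable sheaf $E$ one fixes a pair of objects $(E_{1}, E_{2})$ in the heart at the wall whose extensions on either side produce $\pP_{k}$ and $\pP_{k+1}$, sets $V^{+} = \Ext^{1}_{X}(E_{1}, E_{2})$, $V^{-} = \Ext^{1}_{X}(E_{2}, E_{1})$, $U' = \Ext^{1}_{X}(E_{2}, E_{2})$, and uses the cyclic minimal $A_{\infty}$-structure on $\langle E_{1}, E_{2}\rangle$ to produce a single superpotential $\widehat{w}$ on $\widehat{Z}_{U'}$ whose critical loci on $\widehat{Y}^{\pm}_{U'}$ give the formal completions $\widehat{\pP}_{k,p}$ and $\widehat{\pP}_{k+1,p}$ simultaneously.

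For Assumption~\ref{assum:cond}(ii), the linear coefficients $w_{ij}^{(1)}(\vec{u})$ of $\widehat{w}$ are controlled by the triple product (\ref{triple}) together with Serre duality on the CY threefold $X$, exactly as in (\ref{w:linear}). Injectivity of the bilinear map $\psi$ of (\ref{psi}) on each factor therefore reduces to injectivity on each factor of the composition pairing
$$\Ext^{1}_{X}(E_{1}, E_{2}) \otimes \Ext^{1}_{X}(E_{2}, E_{1}) \to \Ext^{2}_{X}(E_{2}, E_{2}),$$
which via K\"unneth and the product structure $X = S \times C$ becomes the analogous statement on $S$ for the stable sheaf $E$ supported on its irreducible Cohen--Macaulay curve; that statement is Lemma~\ref{lem:inj}. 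Assumption~\ref{assum:cond}(iii) is obtained by repeating Lemma~\ref{lem:O1}: twisting the pair $(\oO_{S} \to E)$ by $\oO_{S}(mH)$ for $m \gg 0$ embeds $\pP_{k}$ inside a moduli space of stable pairs whose map to $U_{k}$ is a projective bundle, and the restriction of its tautological bundle gives the desired $\pi_{k}^{+}$-ample $\oO_{\pP_{k}}(1)$ compatible with (\ref{isom:line}).

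The main obstacle is Assumption~\ref{assum:cond}(i): although formal charts at $p$ for $\pP_{k}$ and $\pP_{k+1}$ separately can each be produced essentially as in Theorem~\ref{thm:Pflip}, Assumption~\ref{assum:cond} requires them to fit into a \emph{single} d-critical simple flip diagram over a common $\widehat{Z}_{U'}$ with one common superpotential. The auxiliary threefold $X = S \times C$ is the essential device, since only there do we get the Serre duality $\Ext^{1}_{X}(E_{1}, E_{2}) \simeq \Ext^{2}_{X}(E_{2}, E_{1})^{\vee}$ that identifies $\pP_{k}$ and $\pP_{k+1}$ as the two GIT chambers of the critical locus of one superpotential. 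Once this is in place, the injectivity from (ii) implies the dimension estimate (\ref{M:product:g}) via Lemma~\ref{lem:smooth2}, and Theorem~\ref{thm:main} delivers the claimed SOD.
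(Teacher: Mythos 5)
Your overall plan — apply Theorem~\ref{thm:main} to a d-critical simple flip structure produced by going to $X=S\times C$ — is the right idea, but the way you execute it contains a structural gap and a concrete error.

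The structural gap is that you propose to apply Theorem~\ref{thm:main} \emph{directly} to the diagram (\ref{dia:P+}), i.e.\ $\pP_k \to U_k \leftarrow \pP_{k+1}$ on the surface $S$, asserting that the cyclic $A_\infty$-superpotential on $X=S\times C$ yields formal charts whose critical loci are $\widehat{\pP}_{k,p}$ and $\widehat{\pP}_{k+1,p}$. But the d-critical charts the CY3 machinery actually produces are for moduli of objects on $X$, namely $M_k^{\star}$ and $M_{k+1}^{\star}$ over $U_k^{\star}$ (Proposition~\ref{prop:MU}), and by Lemmas~\ref{lem:isom:star},~\ref{lem:isom:star2} these are the \emph{products} $\pP_k\times(C\times J_C)$, $\pP_{k+1}\times(C\times J_C)$ over $U_k\times(C\times J_C)$. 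Both $E_1=p_C^{\ast}\lL$ and $E_2=i_{c\ast}F[-1]$ deform on $X$, so the base must be $U=\Ext_X^1(E_1,E_1)\oplus\Ext_X^1(E_2,E_2)$, not just $\Ext_X^1(E_2,E_2)$ as you wrote; with your choice of $U'$ the dimensions of the critical loci will not match $\pP_k$, $\pP_{k+1}$. The paper therefore applies Theorem~\ref{thm:main} to obtain an SOD of $D^b(M_k^{\star})=D^b(\pP_k\times C\times J_C)$, and then uses that the whole diagram and all functors are linear over $C\times J_C$ (Lemma~\ref{lem:isom:star2}(ii)) to restrict the SOD to the fiber $U_k\times\{(0,0)\}$, citing~\cite{MR2801403}. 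This descent-by-base-change step is not in your proposal, and without it you cannot pass from an SOD of $D^b(\pP_k\times C\times J_C)$ to one of $D^b(\pP_k)$.

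The concrete error is your claim that the injectivity required for Assumption~\ref{assum:cond}(ii) ``becomes the analogous statement on $S$ for the stable sheaf supported on its irreducible Cohen--Macaulay curve; that statement is Lemma~\ref{lem:inj}.'' That is the wrong lemma: Lemma~\ref{lem:inj} treats $F$ a pure one-dimensional sheaf with irreducible one-cycle, whereas here $F$ is an $H$-Gieseker stable sheaf on $S$ with Mukai vector $(k,H,k+n)$, which for $k>0$ is torsion-free of positive rank and not supported on a curve. The needed statement is Lemma~\ref{lem:inj2}, whose proof is genuinely different (it reduces to injectivity of $H^0(S,F)\otimes\Ext_S^1(F,\oO_S)\to\Ext_S^1(F,F)$ and uses the universal extension $0\to\Ext_S^1(F,\oO_S)^\vee\otimes\oO_S\to\uU\to F\to 0$ and the $\mu$-stability of $\uU$, via~\cite{MR1717621,noteBG}). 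Finally, the $k=0$ case requires a separate argument, since then $E_2=F[-1]$ for a one-dimensional sheaf, and the paper identifies $M_1^{\star}\cong P_{-n}(X,\beta)\times J_C$ and runs the $P_n$ wall-crossing on $X$; your proposal does not distinguish this case.
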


Let $N\ge 0$ be defined by
\begin{align*}
N \cneq \mathrm{max} \{ k \ge 0 : 
g-k^2-kn \ge 0\}.
\end{align*}
Applying the above theorem from $k=0$ to $k=N$, and noting 
that 
\begin{align*}
\pP_0=P_n(S, [H]) \cong \cC^{[n+g-1]}, \ 
\pP_{N+1}=\emptyset
\end{align*}
where the latter is due to (\ref{dim:Mk}), 
we have the following result: 
\begin{cor}\label{cor:KY}
For $n\ge 0$, we have the SOD 
\begin{align}\label{SOD:KKV}
D^b(\cC^{[n+g-1]})
=\langle \aA_0, \aA_1, \ldots, \aA_N \rangle
\end{align}
where each $\aA_k$ has the SOD
\begin{align*}
\aA_k=\langle \overbrace{D^b(U_k), D^b(U_k), \ldots, D^b(U_k)}^{n+2k}  \rangle. 
\end{align*}
\end{cor}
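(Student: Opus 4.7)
The plan is to derive the statement purely by iterating Theorem~\ref{thm:K3}, treating $k=0,1,\ldots,N$ in turn. The first step would be to identify the starting object: since $\Pic(S)=\mathbb{Z}[H]$ so that $[H]$ is primitive (hence has irreducible one-cycle support), the PT/relative Hilbert scheme identification of Lemma~\ref{lem:locsurface}(i) gives $\pP_0 = P_n(S,[H]) \cong \cC^{[n+g-1]}$, so $D^b(\cC^{[n+g-1]}) = D^b(\pP_0)$.

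The second step is to verify that the recursion terminates, that is $\pP_{N+1} = \emptyset$. By the maximality in the definition of $N$ we have $g - (N+1)^2 - (N+1)n < 0$, so the expected dimension $\dim U_{N+1} = 2(g-(N+1)^2-(N+1)n)$ in (\ref{dim:Mk}) is negative. For a K3 surface, the moduli space of $H$-stable sheaves with Mukai vector $\mathbf{v}$ is nonempty only when $\mathbf{v}^2 \geq -2$; here $\mathbf{v}_{N+1}^2 = 2(g-(N+1)^2-(N+1)n) - 2 < -2$, so $U_{N+1} = \emptyset$. Since $\pP_{N+1}$ projects onto $U_{N+1}$ via $\pi_{N+1}^+$, this forces $\pP_{N+1} = \emptyset$, hence $D^b(\pP_{N+1}) = 0$.

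The third step is the assembly. Applying Theorem~\ref{thm:K3} at $k=0$ yields
\[
D^b(\cC^{[n+g-1]}) = \langle \aA_0, D^b(\pP_1)\rangle,
\]
with $\aA_0$ already of the required internal form. Applying the theorem in turn to $D^b(\pP_1), D^b(\pP_2), \ldots, D^b(\pP_{N-1})$, and finally noting that
\[
D^b(\pP_N) = \langle \aA_N, D^b(\pP_{N+1})\rangle = \aA_N,
\]
I would splice the nested decompositions into the single SOD $D^b(\cC^{[n+g-1]}) = \langle \aA_0, \aA_1, \ldots, \aA_N\rangle$ via transitivity of semiorthogonal decompositions. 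All genuine technical content has already been absorbed into Theorem~\ref{thm:K3}; the corollary is essentially bookkeeping, so the potential obstacle, if one wishes to name one, is simply checking that the wall-crossing diagrams (\ref{dia:P+}) satisfy the hypotheses of Theorem~\ref{thm:main} uniformly for every $k$ in the range $0 \leq k \leq N$, which is built into the statement of Theorem~\ref{thm:K3} and uses smoothness of each $\pP_k$ from~\cite[Lemma~5.117]{KY}.
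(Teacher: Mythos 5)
Your argument is correct and matches the paper's proof, which is simply to iterate Theorem~\ref{thm:K3} from $k=0$ to $k=N$, using the identification $\pP_0\cong\cC^{[n+g-1]}$ and the vanishing $\pP_{N+1}=\emptyset$ (the latter via (\ref{dim:Mk})). Your more explicit justification of $U_{N+1}=\emptyset$ via negativity of the Mukai self-pairing is a reasonable expansion of what the paper leaves implicit.
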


\begin{rmk}\label{rmk:KKV}
As we mentioned in 
in Subsection~\ref{subsec:intro:KY}, the 
SOD (\ref{SOD:KKV}) 
recovers Kawai-Yoshioka's formula (\ref{intro:euler})
\begin{align}\notag
P_{n, g} = (-1)^{n-1}\sum_{k=0}^{N}
(n+2k)e(U_k). 
\end{align}
In~\cite{KY}, the 
formula (\ref{intro:euler}) is the key 
ingredient to prove   
Katz-Klemm-Vafa (KKV) formula for PT
invariants with irreducible curve classes. 
Together with the identities
\begin{align*}
&e(U_k)=e(\Hilb^{g-k(k+n)}(S)), \\
&\sum_{k\ge 0} e(\Hilb^k(S))=
\prod_{k\ge 1}(1-q^k)^{-24}
\end{align*}
the formula (\ref{intro:euler}) is shown 
to imply the following in~\cite{KY}
\begin{align}\label{intro:KKV}
\sum_{g\ge 0} \sum_{n \in \mathbb{Z}}
P_{n, g}z^n q^{g-1}=\left(\sqrt{z}-\frac{1}{\sqrt{z}}  \right)^{-2}
\frac{1}{\Delta(z, q)}.
\end{align}
Here $\Delta(z, q)$ is defined by
\begin{align*}
\Delta(z, q) \cneq q \prod_{n\ge 1}
(1-q^n)^{20}(1-zq^n)^2 (1-z^{-1}q^n)^2. 
\end{align*}
The formula (\ref{intro:KKV})
is the KKV formula mentioned above. 
\end{rmk}

\subsection{Tilting on $S \times C$}
Let $S$ be a K3 surface as in the previous subsection. 
We fix a smooth elliptic curve $C$ and consider 
a compact CY 3-fold $X\cneq S \times C$
with projections $p_S$, $p_C$ 
\begin{align*}
\xymatrix{
X = S \times C \ar[d]_-{p_S} \ar[r]^-{p_C} & C \\
S.
}
\end{align*}
In what follows, we will 
interpret the diagram (\ref{dia:P+}) in terms of wall-crossing 
diagrams in 
$D^b(X)$. 

We define the triangulated subcategory
\begin{align*}
\dD_0 \subset D^b(X)
\end{align*}
consisting of objects 
whose cohomologies are 
supported on fibers of $p_C$. 
The triangulated category $\dD_0$ is the 
derived category of the abelian subcategory
\begin{align*}
\Coh_0(X) \subset \Coh(X)
\end{align*}
consisting of sheaves supported on fibers of $p_C$.
For $c \in C$, let $i_c$ be the inclusion 
\begin{align}\label{ic:include}
i_{c} \colon S \times \{c\} \hookrightarrow S \times C
=X. 
\end{align}
The category $\Coh_0(X)$ is the extension closure of objects 
of the form $i_{c\ast}F$ for some $c \in C$
and 
$F \in \Coh(S)$. 

For $F \in \dD_0$, we
set $v(F) \in \Gamma_S$ to be
\begin{align*}
v(F) \cneq v(p_{S\ast}F)=(v_0(F), v_1(F), v_2(F))
\end{align*}
for $v_i(F) \in H^{2i}(S, \mathbb{Z})$. 
We define the following slope function on 
$\Coh_0(X)$
\begin{align*}
\mu(F) \cneq \frac{v_1(F) \cdot H}{v_0(F)} \in 
\mathbb{Q} \cup \{\infty\}.
\end{align*}
Here $F \in \Coh_0(X)$ and 
we set $\mu(F)=\infty$ if 
$v_0(F)=0$. 
The above slope function on $\Coh_0(X)$ defines the 
$\mu$-stability on it in the usual way: 
an object $E \in \Coh_0(X)$ is defined to be 
\textit{$\mu$-(semi)stable} if for any 
non-zero subsheaf $F' \subsetneq F$, we have 
\begin{align*}
\mu(F') <(\le) \mu(F/F').
\end{align*}
Let $\tT, \fF$ be the subcategories of $\Coh_0(X)$
defined by
\begin{align*}
\tT &\cneq \langle F \in \Coh_0(X) : 
F \mbox{ is } \mu\mbox{-semistable with }\mu(F)>0\rangle_{\rm{ex}}, \\
\fF &\cneq \langle F \in \Coh_0(X) : 
F \mbox{ is } \mu\mbox{-semistable with }\mu(F) \le 0\rangle_{\rm{ex}}.
\end{align*}
Here $\langle - \rangle_{\rm{ex}}$ means the extension closure. 
The pair of subcategories $(\tT, \fF)$ is a torsion 
pair on $\Coh_0(X)$. We have the associated tilting
\begin{align*}
\bB \cneq \langle \fF, \tT[-1] \rangle_{\rm{ex}} \subset \dD_0.
\end{align*}
For $t \in \mathbb{R}_{>0}$, let 
\begin{align}\label{def:Zt}
Z_t \colon K(\dD_0) \to \mathbb{C}
\end{align}
be the group homomorphism 
defined by
\begin{align*}
Z_t(F) &\cneq \int_S e^{-tH \sqrt{-1}} v(F) \\
&= v_2(F)+(1-g)t^2 v_0(F) -(tH \cdot 
v_1(F)) \sqrt{-1}.
\end{align*}
Then the pair 
\begin{align*}
(Z_t, \bB), \ t \in \mathbb{R}_{>0}
\end{align*}
is a Bridgeland stability condition on $\dD_0$
(see~\cite[Lemma~3.3]{TodK3}). 
In particular it defines 
\textit{$Z_t$-(semi)stable objects}: 
an object $E \in \bB$ is 
$Z_t$-(semi)stable if for any 
non-zero subobject $0\neq E' \subsetneq E$
in $\bB$, we have 
the inequality in $(0, \pi]$: 
\begin{align*}
\arg Z_t(E') <(\le) \arg Z_t(E). 
\end{align*}
We have the following lemma: 
\begin{lem}\label{lem:tilting}
An object $E \in \bB$ with 
$v_1(E)=-[H]$, 
$v_0(E) \le 0$ is 
$Z_t$-stable if and only if 
$E\cong i_{c\ast}F[-1]$
for some $c \in C$ and 
$H$-Gieseker stable sheaf $F \in \Coh(S)$.
\end{lem}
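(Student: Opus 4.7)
The plan is to prove both implications using the standard correspondence between tilt-stability and Gieseker stability on the K3 factor, leveraging the product structure $X = S \times C$.

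For the direction $(\Leftarrow)$, given an $H$-Gieseker stable $F \in \Coh(S)$ with $v_1(F) = [H]$ and $v_0(F) \ge 0$, the sheaf $i_{c\ast}F$ has $\mu$-slope $(2g-2)/v_0(F) > 0$ (or $\infty$ if $v_0(F)=0$), so by the implication Gieseker stability $\Rightarrow \mu$-semistability it lies in $\tT$, whence $E := i_{c\ast}F[-1] \in \bB$. To verify $Z_t$-stability, I would take a short exact sequence $0 \to A \to E \to B \to 0$ in $\bB$, extract the four-term exact sequence in $\Coh_0(X)$-cohomology, and identify $\mathcal{H}^{-1}(A)$ (resp.~the quotient $i_{c\ast}F/\mathcal{H}^{-1}(A)$) with the pushforward of a subsheaf $K \subset F$ (resp.~of its cokernel $F/K$) from $S \times \{c\}$. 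A direct computation of $\mathrm{Re}$ and $\mathrm{Im}$ of $Z_t$ in terms of $v(K)$ and $v(F)$, combined with the reduced-Hilbert-polynomial inequality coming from Gieseker stability of $F$, then yields the required $\arg Z_t(A) < \arg Z_t(E)$.

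For the direction $(\Rightarrow)$, I would first reduce to the case where $E$ is set-theoretically supported on a single fiber $S \times \{c\}$: since $\mathcal{H}^{-1}(E), \mathcal{H}^0(E) \in \Coh_0(X)$ are supported on finitely many fibers of $p_C$ and $\Ext^i$ groups between sheaves on disjoint fibers vanish, $E$ admits a canonical direct-sum decomposition along the distinct fibers in its support, and $Z_t$-stability forces only one summand. I would then show $E \cong i_{c\ast}F[-1]$ for some $F \in \Coh(S)$: the exact sequence $0 \to p_C^{\ast}\oO_C(-c) \to \oO_X \to i_{c\ast}\oO_S \to 0$ induces a filtration on the cohomology sheaves of $E$ whose subquotients are pushforwards from $S \times \{c\}$, and stability rules out nontrivial such filtrations; the Mukai vector constraints $v_1(E) = -[H]$ and $v_0(E) \le 0$, together with $\mathcal{H}^{-1}(E) \in \tT$ and $\mathcal{H}^0(E) \in \fF$, then pin the form to $\mathcal{H}^{-1}(E) = i_{c\ast}F$, $\mathcal{H}^0(E) = 0$. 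Finally, any Gieseker-destabilizing subsheaf $F' \subset F$ would lift to a proper subobject $i_{c\ast}F'[-1] \subset E$ in $\bB$ with $\arg Z_t \ge \arg Z_t(E)$, contradicting stability of $E$.

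The main obstacle I anticipate is the $(\Rightarrow)$ direction, specifically the identification $E \cong i_{c\ast}F[-1]$ (ruling out nilpotent thickenings in the $C$-direction) and the precise translation between the ordering of $\arg Z_t$ on subobjects in $\bB$ and the reduced-Hilbert-polynomial ordering defining Gieseker stability on $S$. I expect both points to follow by direct adaptation of the K3-surface analysis carried out in \cite{TodK3}, on whose stability-condition framework the $Z_t$ in (\ref{def:Zt}) is modelled; indeed the lemma may well be deducible essentially verbatim from an analogous statement there, after checking that the Mukai vector conditions $v_1(E) = -[H]$ and $v_0(E) \le 0$ place us in its regime.
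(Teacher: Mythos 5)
Your proposal is conceptually sound but takes a considerably harder route than the paper, and it misses the one observation that collapses almost all of the work. The paper first notes that $\Imm Z_t(-)$ is a positive multiple of $-v_1(-)\cdot H$, which is \emph{non-negative} on $\bB$, and that because $\Pic(S)=\mathbb{Z}[\oO_S(H)]$ the value $-v_1(E)\cdot H = H^2$ is the \emph{smallest positive} value this functional can take on $\bB$. Consequently, for any short exact sequence $0\to A\to E\to B\to 0$ in $\bB$, one of $A$, $B$ lies in the subcategory $\cC = \{F\in\bB : \Imm Z_t(F)=0\}$ (so $\arg Z_t=\pi$), and the other has the same imaginary part as $E$. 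A quick check then shows that $E$ is $Z_t$-stable if and only if $\Hom(\cC,E)=0$, where $\cC = \langle U, \oO_x[-1]\rangle_{\rm ex}$ with $U$ ranging over $\mu$-stable sheaves of slope $0$. Your plan, by contrast, directly compares $\arg Z_t$ of arbitrary subobjects against reduced Hilbert polynomials, which re-derives precisely the content of this one observation at every stage.

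In the forward direction, the reduction to $\Hom(\cC,E)=0$ immediately yields $\hH^0(E)=0$ and that $\hH^1(E)$ has no $0$-dimensional subsheaves and no $\mu=0$ destabilizers; combined with the minimality of $-v_1\cdot H$ again, this forces $\hH^1(E)$ to be a single $i_{c\ast}F$ with $F$ either $\mu$-stable of positive rank or a pure one-dimensional Gieseker stable sheaf, the two cases being identified via the primitivity of the Mukai vector. Your plan to rule out nilpotent thickenings in the $C$-direction via the filtration induced by $0\to p_C^{\ast}\oO_C(-c)\to\oO_X\to i_{c\ast}\oO_S\to 0$ would need an argument that a proper step of that filtration actually produces a subobject in $\bB$ and destabilizes; the natural way to see this is, once more, the minimality of $-v_1\cdot H$, so the shortcut is not optional. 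In the reverse direction, once you have the criterion $\Hom(\cC,E)=0$, verifying it for $E=i_{c\ast}F[-1]$ with $F$ Gieseker stable is a short exercise, rather than a case analysis over all subobjects $A\subset E$ in $\bB$. I recommend restructuring your proof around the functional $-v_1(-)\cdot H$ and the subcategory $\cC$; without that, several of the steps you list as ``direct computations'' hide the real content of the lemma. (Also, the paper attributes the style of argument to~\cite{BayBrill} rather than~\cite{TodK3}.)
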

\begin{proof}
The lemma is well-known (for example see 
the argument of~\cite[Lemma~6.1]{BayBrill}). 
Let $\cC \subset \bB$ be the subcategory defined by
\begin{align*}
\cC &\cneq \{ F \in \bB : \Imm Z_t(F)=0\} \\
&=\langle U, \oO_x[-1] : 
U \in \Coh_0(X) \mbox{ is } \mu\mbox{-stable with }\mu(U)=0, \ 
x \in X \rangle_{\rm{ex}}.
\end{align*}
Suppose that $E \in \bB$ satisfies 
$v_1(E)=-[H]$
and $v_0(E)\le 0$. 
Since $-v_1(-) \cdot H$
is non-negative on $\bB$, and
 $-v_1(E) \cdot H =H^2$ is the
smallest positive value of $-v_1(-) \cdot H$ on $\bB$, 
the object $E$ 
is $Z_t$-stable if and only if 
$\Hom(\cC, E)=0$. 

First suppose that 
$E$ is $Z_t$-stable, 
so we have $\Hom(\cC, E)=0$.
Then we have 
$\hH^0(E)=0$, and 
 $\hH^1(E)$ is either a $\mu$-stable two 
dimensional sheaf 
or a one dimensional $H$-Gieseker stable sheaf. 
It follows that $E \cong i_{c\ast}F[-1]$
for some $c \in C$, where $F$ is a $\mu$-stable sheaf 
on $S$ or a $H$-Gieseker stable one dimensional sheaf on $S$. 
In the former case, since the Mukai vector of $F$ is primitive, 
the $\mu$-stability of it is equivalent 
to its $H$-Gieseker stability. 
Conversely if $E \cong i_{c\ast}F[-1]$
as in the statement, then 
it is obvious that $\Hom(\cC, E)=0$. 
Therefore the lemma is proved. 
\end{proof}

We define the following subcategory
of $D^b(X)$:
\begin{align}\label{def:aA}
\aA \cneq \langle p_C^{\ast}\Pic(C), \bB \rangle_{\rm{ex}}. 
\end{align}
The category $\aA$ is the heart of a bounded t-structure on the 
triangulated subcategory of 
$D^b(X)$ generated by 
$p_C^{\ast}\Pic(C)$ and objects in $\dD_0$
 (see~\cite[Proposition~2.9]{TodK3}). 
In particular, $\aA$ is 
an abelian category. 
Note that $E \in \aA$ satisfies 
$\rank(E)=0$ if and only if $E \in \bB$. 
We will use the following property on the 
abelian category $\aA$: 
\begin{lem}\label{A:exact}
For any object $E \in \aA$, there is an exact sequence
in $\aA$
\begin{align}\label{A:EEE}
0 \to E' \to E \to E'' \to 0
\end{align}
such that $E' \in \bB$ and 
$E'' \in \langle p_C^{\ast}\Pic(C) \rangle_{\rm{ex}}$. 
\end{lem}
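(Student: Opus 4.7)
My approach is to establish that $(\bB, \cC)$, where $\cC \cneq \langle p_C^{\ast}\Pic(C)\rangle_{\rm ex}$, is a torsion pair on $\aA$; the desired exact sequence then appears as the canonical torsion filtration of $E$.

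First I verify semiorthogonality $\Hom_{\aA}(B, L) = 0$ for $B \in \bB$ and $L \in \cC$. Since $\aA$ is a heart, $\Hom_{\aA}(B, L) = \Hom_{D^b(X)}(B, L)$, and for $L = p_C^{\ast}M$ Grothendieck duality for the smooth fibration $p_C$ (with $\omega_{X/C} \cong \oO_X$ for K3 fibers, so $p_C^{!} = p_C^{\ast}[2]$) gives
\[ \Hom_{D^b(X)}(B, p_C^{\ast}M) \cong \Hom_{D^b(C)}(\dR p_{C*}B,\, M[-2]). \]
The cohomology sheaves of $B \in \bB \subset \dD_0$ are supported on finitely many fibers of $p_C$, so $\dR p_{C*}B$ has cohomology concentrated at finitely many points of $C$; a direct degree comparison shows this carries no morphism to $M[-2]$, and extension closure gives the vanishing for arbitrary $L \in \cC$.

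Next I claim $\bB$ is a Serre subcategory of $\aA$: closure under extensions follows because $\dD_0 \subset D^b(X)$ is triangulated and $\bB$ is a heart on $\dD_0$, while closure under subobjects and quotients follows from a rank function on $\aA$ that is additive on short exact sequences and vanishes precisely on $\bB$. By Noetherianity of $\aA$ (inherited from coherence on $X$), every $E \in \aA$ then admits a maximal $\bB$-subobject $E' \subset E$, and maximality together with the semiorthogonality above ensures that $E/E'$ contains no nonzero $\bB$-subobject.

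The main obstacle is showing that $E/E'$ lies in $\cC$. I proceed by induction on the length of a filtration of $E/E'$ by factors in $\bB \cup p_C^{\ast}\Pic(C)$, available from the definition $\aA = \langle p_C^{\ast}\Pic(C), \bB\rangle_{\rm ex}$. The first factor must lie in $p_C^{\ast}\Pic(C)$, since a $\bB$ first factor would furnish a $\bB$-subobject of $E/E'$. The delicate point is ruling out later $\bB$-factors: one such factor would produce an extension $0 \to L \to G \to B \to 0$ in $\aA$ with $L \in \cC$, $B \in \bB$, and class in $\Ext^1_{\aA}(B, L) \cong \Hom_{D^b(C)}(\dR p_{C*}B,\, V[-1])$ (writing $L = p_C^{\ast}V$), which need not vanish. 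I will overcome this by lifting the configuration to $D^b(C)$ via $\dR p_{C*}$, decomposing the resulting object on $C$ into its locally free and torsion summands, and pulling back to produce a nonzero $\bB$-subobject of $G$ and hence of $E/E'$ that contradicts the maximality of $E'$, forcing the absence of subsequent $\bB$-factors and completing the induction.
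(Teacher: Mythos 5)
There is a genuine gap at the crucial step, and the proposed way to bridge it does not work.

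Your torsion-pair framing is a sensible reformulation: after checking $\Hom_{\aA}(\bB,\cC)=0$ (which holds — though note that a pure ``degree comparison'' on $\dR p_{C\ast}B$ is not enough, since $\dR p_{C\ast}B$ a priori has cohomology in degree $3$ coming from $R^2 p_{C\ast}(\hH^1 B)$ and $\Ext^1_C(\oO_c, M)\neq 0$; the vanishing you need is $H^2(S,\hH^1B)=0$, which follows from K3 Serre duality and $\mu(\hH^1B)>0$, not from degrees), taking the maximal $\bB$-subobject $E'$ reduces everything to showing: if $0\to L\to G\to B\to 0$ is a nonsplit extension in $\aA$ with $L\in\cC$ and $B\in\bB$, then $G$ nevertheless has a nonzero $\bB$-subobject. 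You acknowledge this is where the real content sits, and propose to resolve it by pushing forward to $D^b(C)$, splitting $\dR p_{C\ast}G$ into torsion and locally free summands, and ``pulling back.'' This cannot produce the needed subobject. The pushforward $\dR p_{C\ast}$ collapses the K3 fibers and destroys precisely the information that identifies the $\bB$-subobject of $G$: a direct computation in the basic case $L=p_C^{\ast}\lL$, $B=i_{c\ast}F[-1]$ shows that the torsion summand of $\dR p_{C\ast}G$ is $H^0(F)\otimes\oO_c[-1]\oplus H^1(F)'\otimes\oO_c[-2]$ (where $H^1(F)'=\ker\xi^{\vee}$), which is \emph{not} $\dR p_{C\ast}$ of any $\bB$-subobject of $G$ — the correct subobject $i_{c\ast}F'[-1]$ pushes down to $(\mathbb{C}\oplus H^0(F))\otimes\oO_c[-1]\oplus H^1(F)'\otimes\oO_c[-2]$, which has an extra copy of $\oO_c[-1]$. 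Moreover ``pulling back'' via $p_C^{\ast}$ and the counit of adjunction does not give a monomorphism into $G$ in $\aA$.

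The argument that actually closes this gap (the paper's proof, carried out in the illustrative case and referenced to [TodK3, Lemma~7.5] in general) stays on $X$: the nonsplit class $\xi$ lives in $\Ext^2_X(i_{c\ast}F, p_C^{\ast}\lL)\cong\Ext^1_S(F,\oO_S)$ (via $i_c^{!}p_C^{\ast}\lL\cong\oO_S[-1]$); it determines a nontrivial extension $0\to\oO_S\to F'\to F\to 0$ on $S$ with $[F']\in U_{k+1}$; and a commutative diagram of distinguished triangles built from the short exact sequence $p_C^{\ast}\lL(-c)\to p_C^{\ast}\lL\to i_{c\ast}\oO_S$ on $X$ exhibits $G\cong E$ sitting in $0\to i_{c\ast}F'[-1]\to E\to p_C^{\ast}\lL(c)\to 0$, directly producing the desired decomposition (with a twist of the line bundle by $\oO_C(c)$). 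This explicit ``flip'' of the short exact sequence is the heart of the lemma, and your proposal neither reproduces it nor supplies a working substitute.
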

\begin{proof}
The lemma is proved in~\cite[Lemma~7.5]{TodK3}
in the case of $S \times \mathbb{P}^1$, and 
the same argument proves the lemma. 
For simplicity, we prove the lemma 
when $E$ fits into a non-split extension
in $\aA$
\begin{align}\label{E:LF}
0 \to 
p_C^{\ast} \lL \to E \to i_{c\ast}F[-1]
\to 0
\end{align}
for $\lL \in \Pic(C)$, $[F] \in U_k$, 
and $c \in C$. 
The full details are left
to~\cite[Lemma~7.5]{TodK3}. 

Let $\xi$ be the extension class of (\ref{E:LF}). Then 
since $i_c^{!}p_C^{\ast}\lL=\oO_S[-1]$, we have 
\begin{align*}
\xi \in \Ext^2_X(i_{c\ast}F, p_C^{\ast}\lL)
=\Ext_S^1(F, \oO_S). 
\end{align*}
Therefore $\xi$ gives rise to the non-trivial 
extension of sheaves on $S$
\begin{align*}
0 \to \oO_S \to F' \to F \to 0.
\end{align*}
It is easy to see that $F'$ is 
$H$-Gieseker stable so $[F'] \in U_{k+1}$. 
We have the following commutative diagram
\begin{align*}
\xymatrix{
&  E \ar[d] & p_C^{\ast}\lL(c) \ar[d] \\
i_{c\ast}F'[-1] \ar[r] & i_{c\ast}F[-1] \ar[r] \ar[d]^-{\xi} 
& i_{c\ast}\oO_S \ar[d] \\
& p_C^{\ast}\lL[1] \ar[r]^-{\id} & p_C^{\ast}\lL[1]. 
}
\end{align*}
Here horizontal and vertical sequences are distinguished 
triangles. 
By the above diagram, we obtain the exact sequence
in $\aA$
\begin{align*}
0 \to 
i_{c\ast}F'[-1] \to E \to p_C^{\ast}\lL(c) \to 0.
\end{align*}
The above exact sequence is 
a desired sequence (\ref{A:EEE}). 
\end{proof}

\subsection{Weak stability conditions on $\aA$}
Let $\aA$ be the abelian category 
given in (\ref{def:aA}). 
For $t \in \mathbb{R}_{>0}$
and $E \in \aA$, we define
\begin{align*}
\mu_t^{\star}(E) \in \mathbb{R} \cup \{\infty\}
\end{align*}
by the following: 
\begin{align*}
\mu_t^{\star}(E) \cneq \left\{ \begin{array}{cc}
0, & \rank(E)\neq 0, \\
-\frac{\Re Z_t(E)}{\Im Z_t(E)}, & 
\rank(E)=0. 
\end{array} \right.
\end{align*}
Here 
if $\rank(E)=0$, then $E \in \bB$ and 
$Z_t(E) \in \mathbb{C}$ is 
given in (\ref{def:Zt}). 
The following stability condition on $\aA$ 
appeared in~\cite{TodK3} in 
the framework of weak stability conditions: 
\begin{defi}
An object $E \in \aA$ is $\mu_t^{\star}$-(semi)stable 
if for any exact sequence $0 \to E' \to E \to E'' \to 0$
in $\aA$, we have
\begin{align*}
\mu_t^{\star}(E')<(\le) \mu_t^{\star}(E'').
\end{align*}
\end{defi}
Below 
we fix $n \in \mathbb{Z}$ and 
characterize 
$\mu_t^{\star}$-semistable 
objects $E \in \aA$ satisfying
the condition
\begin{align}\label{cond:ch}
\ch(E) &=(1, 0, -i_{c\ast}[H], -n) \\
\notag
& \in H^0(X) \oplus H^2(X) \oplus 
H^4(X) \oplus H^6(X). 
\end{align}

\begin{prop}\label{prop:wall}
For $k \in \mathbb{Z}_{>0}$, suppose that 
$t \in \mathbb{R}_{>0}$ satisfies
\begin{align}\label{t:ineq}
t_k < t < t_{k-1}, \ t_k \cneq \sqrt{\frac{n+k}{(g-1)k}}, \ 
t_0 \cneq \infty. 
\end{align}
Then an object $E \in \aA$ 
satisfying (\ref{cond:ch})
is $\mu_t^{\star}$-semistable 
if and only if 
$E$ is isomorphic to a two term complex
\begin{align}\label{twoterm}
E \cong (p_C^{\ast}\lL \stackrel{s}{\to} i_{c\ast}F)
\end{align}
for some $c \in C$, 
$[F] \in U_k$, 
$\lL \in \Pic^k(C)$ and 
$s$ is a non-zero morphism. 
Here 
$\Pic^k(C) \subset \Pic(C)$
is the subset of degree $k$ line bundles, and 
$p_C^{\ast}\lL$ is located in degree zero.
Moreover in this case, $E$ is 
$\mu_t^{\star}$-stable. 
\end{prop}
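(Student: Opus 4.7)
The plan is to analyze an object $E\in\aA$ with the prescribed Chern character via the extension
\[
0\to E_1\to E\to E_2\to 0
\]
provided by Lemma~\ref{A:exact}, where $E_1\in\bB$ and $E_2\in\langle p_C^{\ast}\Pic(C)\rangle_{\rm ex}$. The additivity of rank forces $E_2\cong p_C^{\ast}\lL$ for a single line bundle $\lL\in\Pic(C)$, and a Chern character computation (using $\td(N_{i_c})=1$ for the codimension-one inclusion $i_c$ together with Künneth on $X=S\times C$) shows that matching (\ref{cond:ch}) forces $\deg\lL\eqcn k'\in\mathbb Z_{\ge 0}$ and gives $E_1$ the numerical class $-\mathbf v_{k'}$ in $\Gamma_S$, concentrated on a single fiber $S\times\{c\}$ for some $c\in C$.

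I would then translate $\mu_t^{\star}$-semistability of $E$ into properties of $E_1$. Since $\rk(E)=1$ gives $\mu_t^{\star}(E)=0$, the subobject $E_1\hookrightarrow E$ imposes
\[
\mu_t^{\star}(E_1)=\frac{(k'+n)-(g-1)t^2k'}{2t(g-1)}\le 0,
\]
which is equivalent to $t\ge t_{k'}$. Moreover, any $Z_t$-destabilizing subobject of $E_1$ in $\bB$ would produce a $\mu_t^{\star}$-destabilizing subobject of $E$ in $\aA$, so $E_1$ must itself be $Z_t$-semistable in $\bB$. Applying Lemma~\ref{lem:tilting} to $E_1$ (noting $v_1(E_1)=-[H]$ and $v_0(E_1)=-k'\le 0$) identifies $E_1\cong i_{c\ast}F[-1]$ for some $H$-Gieseker stable sheaf $F$ on $S$, and the numerical match gives $[F]\in U_{k'}$. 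The distinguished triangle $E_1\to E\to p_C^{\ast}\lL$ then realizes $E$ as the two-term complex $(p_C^{\ast}\lL\to i_{c\ast}F)$ of (\ref{twoterm}).

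To pin down $k'=k$ I would combine the inequality $t\ge t_{k'}$ with the open wall condition $t_k<t<t_{k-1}$ and the strict monotonicity of $t_j^2=(n+j)/((g-1)j)$. The bound $t<t_{k-1}$ immediately rules out $k'\le k-1$. For $k'\ge k+1$, I would factor the nonzero differential $p_C^{\ast}\lL\to i_{c\ast}F$ as $p_C^{\ast}\lL\twoheadrightarrow i_{c\ast}\oO_S\to i_{c\ast}F$ (the first map being restriction to $S\times\{c\}$ after a trivialization $\lL|_{S\times\{c\}}\cong\oO_S$, the second an injective section $s\colon\oO_S\hookrightarrow F$) and invoke the octahedral axiom on this composition, yielding an alternative short exact sequence in $\aA$,
\[
0\to p_C^{\ast}\lL(-c)\to E\to i_{c\ast}F'[-1]\to 0,
\]
where $F'\cneq F/s(\oO_S)$ has Mukai vector $\mathbf v_{k'-1}$. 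The analogous computation of $\mu_t^{\star}(i_{c\ast}F'[-1])$ requires $t\le t_{k'-1}$ for semistability, but for $k'\ge k+1$ one has $t_{k'-1}\le t_k<t$, contradicting this. Hence $k'=k$, and the strict inequalities at both walls upgrade semistability to stability. The converse direction (stability of the two-term complex in the range $t_k<t<t_{k-1}$) follows by applying the same dichotomy to an arbitrary subobject $E'\subset E$ and reducing via Lemma~\ref{A:exact} to the two families of extensions already handled.

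The main obstacle will be verifying that the alternative exact sequence above actually lies in $\aA$ and not merely in $D^b(X)$; this reduces to checking $i_{c\ast}F'\in\tT$, i.e., that the Harder--Narasimhan pieces of $F/s(\oO_S)$ all have positive $\mu$-slope in $\Coh_0(X)$. Once this classification of destabilizers is in place, the proposition reduces to the direct numerical comparison of the walls $t_j$ governed by the formula $t_j^2=(n+j)/((g-1)j)$.
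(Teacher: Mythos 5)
Your plan for the ``only if'' direction tracks the paper's Step~1 closely: extract the sequence $0 \to E_1 \to E \to p_C^{\ast}\lL \to 0$ from Lemma~\ref{A:exact}, apply Lemma~\ref{lem:tilting} to identify $E_1 \cong i_{c\ast}F[-1]$, pass to cohomology (equivalently your octahedron) to get the dual sequence $0 \to p_C^{\ast}\lL(-c) \to E \to i_{c\ast}G[-1] \to 0$, and squeeze $k'$ between the two wall inequalities. This is the same argument, and your concern at the end (whether the dual sequence lies in $\aA$) is a non-issue: $G$ is a quotient of the $\mu$-semistable sheaf $F$, which has $\mu(F)=H^2/k'>0$, so every HN factor of $G$ has slope $\ge \mu(F)>0$, giving $G\in\tT$.

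There are two real gaps. First, your step ``any $Z_t$-destabilizing subobject of $E_1$ in $\bB$ would produce a $\mu_t^{\star}$-destabilizing subobject of $E$'' is not correct as stated: a subobject $E_1'\subset E_1$ with $\arg Z_t(E_1')>\arg Z_t(E_1)$ need not satisfy $\mu_t^{\star}(E_1')>0=\mu_t^{\star}(E/E_1')$, since you only know $\mu_t^{\star}(E_1)\le 0$. The claim is rescued by the observation (made in the proof of Lemma~\ref{lem:tilting}) that $-v_1(-)\cdot H$ is non-negative on $\bB$ and attains its minimal positive value $H^2$ on $E_1$; hence any $Z_t$-destabilizer lies in $\cC=\{\Im Z_t=0\}$, where $\mu_t^{\star}=\infty$. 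You need to invoke this minimality explicitly. Relatedly, the Chern character computation does not by itself force $k'\ge 0$, which you need before Lemma~\ref{lem:tilting} applies ($v_0(E_1)\le 0$) and before $t_{k'}$ even makes sense; one must separately rule out $k'\le 0$, e.g.\ by noting that $t>t_k$ gives $(g-1)t^2>1$, which makes $\Re Z_t(E_1)<0$ for $k'\le 0$.

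Second, and more seriously, the converse direction is essentially absent. Your one sentence ``applying the same dichotomy to an arbitrary subobject $E'\subset E$ and reducing via Lemma~\ref{A:exact}'' does not reproduce the paper's Step~2, which is genuinely different in flavor: it splits on $\rank(A)\in\{0,1\}$ for a subobject sequence $0\to A\to E\to B\to 0$, uses $\Hom(\Coh_0(X),E)=0$ and Serre duality on $S$ to show $\Hom(A,p_C^{\ast}\lL)=0$ and hence $A\hookrightarrow i_{c\ast}F[-1]$ in the rank-$0$ case, and in the rank-$1$ case manipulates the HN filtration of $\hH^0(B)$ (including the subtlety of stripping off $\mu=0$ pieces) to produce a surjection $G[-1]\twoheadrightarrow B$ before invoking $Z_t$-stability of $G[-1]$. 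None of this is a ``reduction via Lemma~\ref{A:exact}''; it needs to be spelled out.
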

\begin{proof}
\begin{sstep}
The `only if' direction. 
\end{sstep}
Let us take 
$t \in (t_{k}, t_{k-1})$ and 
a $\mu_t^{\star}$-semistable
object $E \in \aA$ satisfying (\ref{cond:ch}). 
By~\cite[Lemma~7.5]{TodK3}, 
there is an exact sequence in $\aA$
\begin{align}\label{exact:aec}
0 \to A \to E \to p_C^{\ast}\lL \to 0
\end{align}
for some $A \in \bB$, 
$\lL \in \Pic^r(C)$
for some $r \in \mathbb{Z}$.  
The condition (\ref{cond:ch}) 
implies that 
$v(A)=-v_r$. 
The above exact sequence and the 
$\mu_t^{\star}$-semistability of 
$E$ implies that
\begin{align*}
\Re Z_t(A)=-r-n+r(g-1)t^2 \ge 0.
\end{align*}
As $t<t_{k-1}$, 
the above inequality implies that $r\ge k>0$. 

The $\mu_t^{\star}$-stability of 
$E$ 
implies that $\Hom(\cC, E)=0$, 
where $\cC \subset \bB$ is defined in the proof of Lemma~\ref{lem:tilting}. 
By the exact sequence (\ref{exact:aec}) we have $\Hom(\cC, A)=0$, and 
Lemma~\ref{lem:tilting} shows that 
$A \cong i_{c\ast}F[-1]$
for some $c \in C$ and $[F] \in U_r$.
Therefore $E$ is isomorphic to 
a two term complex 
\begin{align*}
E=(p^{\ast}\lL \stackrel{s'}{\to} 
i_{c\ast}F)
\end{align*}
where $s'$ must be non-zero due to the 
$\mu_t^{\star}$-semistability of $E$. 
Let us show that 
$r=k$. 
By taking the cohomologies of $E$, 
we obtain the exact sequence in $\aA$
\begin{align}\label{exact:rEG}
0 \to p_C^{\ast}\lL(-c) \to E \to G[-1] \to 0
\end{align}
where $G$ is the cokernel of $s'$. 
Since $v(G)=v_{r-1}$, the $\mu_t^{\star}$-semistability 
of $E$
implies that
\begin{align}\label{eqn:ZG}
\Re Z_t(G[-1])=-r+1-n+(r-1)(g-1)t^2 \le 0.
\end{align}
As $t>t_k$, 
the above inequality implies that  
$k\ge r$. As we already proved $r\ge k$, 
it follows that $r=k$. 
Therefore we have proved the only if direction of the proposition. 

\begin{sstep}
The `if' direction.
\end{sstep}
Conversely, let us take an object $E \in \aA$
of the form (\ref{twoterm}). 
We show that $E$ is $\mu_t^{\star}$-stable
if $t \in (t_k, t_{k-1})$. 
Let us take an exact sequence in $\aA$
\begin{align}\label{exact:AEB}
0 \to A \to E \to B \to 0
\end{align}
such that $A$, $B$ are non-zero. 
We will show the inequality
\begin{align}\label{ineq:AB}
\mu_t^{\star}(A)<\mu_t^{\star}(B).
\end{align}
Since $\rank(E)=1$, we have either 
$(\rank(A), \rank(B))=(0, 1)$ or $(1, 0)$. 
We will show (\ref{ineq:AB})
in each cases. 

First suppose that 
$\rank(A)=0$, i.e. $A \in \bB$. 
By the exact sequence in $\aA$
\begin{align}\label{exact:FEk}
0 \to i_{c\ast}F[-1] \to E \to p_C^{\ast}\lL \to 0
\end{align}
we have
$\Hom(\Coh_0(X), E)=0$. 
Therefore we have $\hH^0(A)=0$
and $A \in \tT[-1]$ holds. 
Then $A$ is given as an iterative extensions
of objects of the form $i_{c\ast}T[-1]$ 
for some $c \in C$, 
where $T \in \Coh(S)$ is either torsion or 
$\mu$-stable with $\mu(T)>0$. 
By the Serre duality and the $\mu$-stability of $T$, we have 
\begin{align*}
\Hom(i_{c\ast}T[-1], p^{\ast}_C\lL) &=
\Hom(T, \oO_{S}) \\
&=0. 
\end{align*}
Therefore we have $\Hom(A, p^{\ast}_C\lL)=0$. 
By the exact sequences (\ref{exact:AEB}) and (\ref{exact:FEk}), 
we have the injection 
$A \hookrightarrow i_{c\ast}F[-1]$ in $\bB$. By Lemma~\ref{lem:tilting}, 
the object $i_{c\ast}F[-1]$ is $Z_t$-stable
in $\bB$. Therefore we have
\begin{align*}
\mu_t^{\star}(A) \le \mu_t^{\star}(i_{c\ast}F[-1])
=\mu_t^{\star}(-\mathbf{v}_k)<0=\mu_t^{\star}(B)
\end{align*}
where $\mu_t^{\star}(-\mathbf{v}_k)<0$ is due to $t_k<t$. 
Therefore (\ref{ineq:AB}) holds. 

Next suppose that $\rank(A)=1$, i.e. $B \in \bB$. 
Let $T \subset \hH^0(B)$ be
the HN factor of $\hH^0(B)$ in $\mu$-stability 
such that
$\mu(-)$ is the maximal. 
Note that $\mu(T) \le 0$ 
by the definition of $\bB$. 
If $\mu(T)=0$, then 
$\mu_t^{\star}(T)=\infty$ and we have 
\begin{align*}
\mu_t^{\star}(B) \ge \mu_t^{\star}(B/T).
\end{align*}
Therefore by replacing $B$ by $B/T$, we may assume that 
$\mu(T)<0$. 
This implies the vanishing 
\begin{align}\label{B:vanish}
\Hom(p_C^{\ast}\Pic(C), B)=0.
\end{align}
Similarly to (\ref{exact:rEG}), we have the 
exact sequence in $\aA$
\begin{align}\label{exact:kEG}
0 \to p_C^{\ast}\lL(-c) \to E \to G[-1] \to 0
\end{align}
where $G$ is the cokernel of $s$ in (\ref{twoterm}). 
By the exact sequences (\ref{exact:AEB}), (\ref{exact:kEG}),
and the vanishing (\ref{B:vanish}), 
we see that there is a 
surjection $G[-1] \twoheadrightarrow B$
in $\bB$. By Lemma~\ref{lem:tilting}, 
the object $G[-1] \in \bB$ is $Z_t$-stable.
Therefore we have
\begin{align*}
\mu_t^{\star}(B) \ge \mu_t^{\star}(G[-1])
=\mu_t^{\star}(-\mathbf{v}_{k-1})>0=\mu_t^{\star}(A)
\end{align*} 
where $\mu_t^{\star}(-\mathbf{v}_{k-1})>0$ is due to $t<t_{k-1}$.
Therefore (\ref{ineq:AB}) holds. 
\end{proof}
When $t$ lies on a wall, 
the $\mu_t^{\star}$-semistable objects are 
characterized by the following lemma. 
\begin{lem}\label{lem:wall}
An object $E \in \aA$ 
satisfying (\ref{cond:ch}) is $\mu_{t_k}^{\star}$-semistable 
if and only if 
$E$ is 
$S$-equivalent to a $\mu_{t_k}^{\star}$-polystable 
object of the form 
\begin{align}\label{E:poly}
E_1 \oplus E_2, \ 
E_1=p_C^{\ast}\lL, \ 
E_2=i_{c\ast}F[-1]
\end{align}
for some $c \in C$, $[F] \in U_k$
and $\lL \in \Pic^k(C)$. 
\end{lem}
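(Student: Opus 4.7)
The lemma is the analogue of Proposition~\ref{prop:wall} at the wall $t=t_k$, where the strict slope inequalities from the open interval degenerate to equalities; the proof will mirror that of Proposition~\ref{prop:wall} with modifications appropriate to working on the wall itself. I split the argument into the two directions of the iff.

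For the ``if'' direction, the strategy is to verify that $E_1 \oplus E_2$ is $\mu_{t_k}^{\star}$-polystable. Both summands have $\mu_{t_k}^{\star}=0$: $E_1=p_C^{\ast}\lL$ by the rank-one convention in the definition, and $E_2=i_{c\ast}F[-1]$ by the very defining equation $(g-1)t_k^2=(n+k)/k$ of the wall. Stability of $E_2$ reduces to the $Z_{t_k}$-stability in $\bB$ granted by Lemma~\ref{lem:tilting}, since any nonzero subobject of $E_2$ in $\aA$ has rank zero and therefore lies in $\bB$. Stability of $E_1=p_C^{\ast}\lL$ splits into two cases. Rank-zero subobjects are excluded by the vanishing $\Hom(\bB, p_C^{\ast}\lL)=0$, which follows from the adjunction $i_c^{!}p_C^{\ast}\lL \cong \oO_S[-1]$ combined with the slope constraints built into $\fF$ and $\tT$. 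Rank-one subobjects must be pullbacks $p_C^{\ast}\lL'$ from sub-line-bundles $\lL' \hookrightarrow \lL$ on $C$, and the corresponding rank-zero quotients $p_C^{\ast}\oO_D$ satisfy $\Im Z_{t_k}(p_C^{\ast}\oO_D)=0$ with $\Re Z_{t_k}(p_C^{\ast}\oO_D) = -n\deg D/k < 0$ (for $n>0$), so they receive $\mu_{t_k}^{\star}=+\infty$, producing the required strict inequality.

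For the ``only if'' direction, I would adapt the argument of Proposition~\ref{prop:wall}. Given a $\mu_{t_k}^{\star}$-semistable $E$ satisfying~(\ref{cond:ch}), Lemma~\ref{A:exact} supplies an SES $0 \to A \to E \to p_C^{\ast}\lL \to 0$ with $A \in \bB$ and $\lL \in \Pic^r(C)$. Semistability applied to the sub $A$ gives $\Re Z_{t_k}(A) = n(r-k)/k \geq 0$, so $r \geq k$. An analysis of rank-one sub-pullbacks $p_C^{\ast}\lL'' \hookrightarrow E$ (whose presence would force their degree to satisfy $k'' \leq k$, lest the rank-zero cokernel destabilize $E$) combined with the structure of the extension class produces the complementary bound $r \leq k+1$. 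Arguing as in Proposition~\ref{prop:wall}, Lemma~\ref{lem:tilting} then identifies $A \cong i_{c\ast}F[-1]$ with $[F] \in U_r$. In the case $r=k$ the SES is itself the Jordan--H\"older filtration, with stable factors $p_C^{\ast}\lL \in \Pic^k$ and $i_{c\ast}F[-1]$ for $[F] \in U_k$. In the case $r=k+1$, the extension class in $\Ext^1(p_C^{\ast}\lL, i_{c\ast}F[-1]) \cong H^0(F)$ produces an essentially unique sub $p_C^{\ast}\lL(-c) \hookrightarrow E$ of degree $k$, whose cokernel is $i_{c\ast}F''[-1]$ with $[F''] \in U_k$ arising from the cokernel on $S$ of the corresponding section; this realizes the JH filtration with factors in $\Pic^k$ and $U_k$ in this case as well.

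The main technical obstacle is the strict $\mu_{t_k}^{\star}$-stability of $E_1=p_C^{\ast}\lL$ in the ``if'' direction, since $p_C^{\ast}\lL$ admits many rank-one sub-pullbacks of equal slope zero. Strictness survives only through the convention assigning slope $+\infty$ to rank-zero quotients in the kernel of $\Im Z_{t_k}$, which in turn requires $n>0$; the degenerate case $n=0$ collapses all walls to a single point and demands separate treatment. A secondary subtlety is extracting the JH factors in the $r=k+1$ case of the ``only if'' direction: although the canonical SES has sub in $U_{k+1}$ and quotient in $\Pic^{k+1}$, the actual JH factors at slope zero live in $\Pic^k$ and $U_k$ via the refined sub $p_C^{\ast}\lL(-c)$, so one must exhibit this refined filtration explicitly before invoking the stability established in the ``if'' direction.
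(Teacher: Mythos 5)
Your proof is correct and takes essentially the same approach as the paper: re-run Step~1 of Proposition~\ref{prop:wall} at $t=t_k$, observe that the two slope inequalities (from the sub $A$ and from the quotient $G[-1]$ in the sequence (\ref{exact:rEG})) now only pin down $r\in\{k,k+1\}$, and in the $r=k+1$ case pass to the refined sub $p_C^\ast\lL(-c)$ of degree $k$ to exhibit the required Jordan--H\"older filtration. You are in fact more careful than the paper's terse ``if'' direction in explicitly verifying $\mu_{t_k}^\star$-\emph{stability} (not merely semistability) of the factors $p_C^\ast\lL$ and $i_{c\ast}F[-1]$, which is what is actually needed for the object (\ref{E:poly}) to be polystable as the lemma asserts.
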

\begin{proof}
The `if' direction is obvious as 
both of $E_1$, $E_2$ are $\mu_{t_k}^{\star}$-semistable
with $\mu_{t_k}^{\star}(E_1)=\mu_{t_k}^{\star}(E_2)=0$. 
The `only if' direction 
is proved similarly to 
Step~1 in the proof of Proposition~\ref{prop:wall}. 
If we apply the proof above
for $t=t_k$, the only
point to notice is that, 
just after the equation (\ref{eqn:ZG})
we only have $k\ge r-1$
as we take $t=t_k$. 
Therefore we have either $r=k$ or $r=k+1$. 
In the latter case, the exact sequence 
(\ref{exact:rEG})
shows that $E$ is $S$-equivalent to the 
object of the form (\ref{E:poly}). 
\end{proof}

\subsection{Moduli stacks of semistable objects}
Let $\mM$ be 
the 2-functor
\begin{align*}
\mM \colon \sS ch/\mathbb{C} \to \gG roupoid
\end{align*}
sending a $\mathbb{C}$-scheme $S$ to 
the groupoid of relatively perfect objects 
$\eE \in D^b(X \times S)$ such that 
for each point $s \in S$, 
the object $\eE_s \cneq \dL i_s^{\ast}\eE$
for the inclusion 
$i_s \colon X \times \{s\} \hookrightarrow X \times S$
satisfies 
$\Ext^{<0}(\eE_s, \eE_s)=0$. 
The stack $\mM$ is known to be an 
Artin stack locally of finite type~\cite{LIE}. 
For a fixed $n \in \mathbb{Z}_{\ge 0}$
and $t \in \mathbb{R}_{>0}$, 
we consider the substack 
\begin{align}\label{t:substack}
\mM_t^{\star} \subset \mM
\end{align}
to be 
the stack 
whose $S$-valued points
consist
of $\eE \in \mM(S)$
such that 
for each $s \in S$, the object 
$\eE_s$ is a
$\mu_t^{\star}$-semistable object in 
$\aA$ satisfying (\ref{cond:ch}). 
Using Proposition~\ref{prop:wall} and Lemma~\ref{lem:wall}, 
we show the following: 
\begin{prop}\label{prop:stack}
The stack $\mM_t^{\star}$ is an Artin stack of finite type 
such that (\ref{t:substack}) is an open immersion. 
Moreover if $t\in (t_k, t_{k-1})$, the stack 
$\mM_t^{\star}$ is smooth. 
\end{prop}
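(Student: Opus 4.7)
The plan proceeds in three stages. For openness of $\mM_t^{\star} \hookrightarrow \mM$, I would combine the explicit characterization of semistable objects in Proposition~\ref{prop:wall} and Lemma~\ref{lem:wall} with the standard openness argument for weak stability conditions developed in~\cite{TodK3}: given $\eE \in \mM(S)$ with $\eE_{s_0}$ being $\mu_t^{\star}$-semistable, the bounded set of potential destabilizing sub- and quotient objects (controlled by the numerical constraint (\ref{cond:ch}) and the wall structure) allows one to exclude the existence of such destabilizers on a Zariski open neighborhood of $s_0$.

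For finite type, boundedness is immediate from Proposition~\ref{prop:wall}: every $E \in \mM_t^{\star}(\mathbb{C})$ is the cone of a non-zero morphism $s \colon p_C^{\ast}\lL \to i_{c\ast}F$ with $[F] \in U_k$ (a projective variety), $c \in C$, $\lL \in \Pic^k(C)$, and $s$ lying in the bounded-dimensional space $\Hom_X(p_C^{\ast}\lL, i_{c\ast}F) \cong H^0(S, F) \otimes \lL^{-1}|_c$. Since $U_k$, $C$, $\Pic^k(C)$ are all of finite type and $H^0(S, F)$ has bounded dimension across $U_k$, the whole family of such $E$ is bounded.

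For smoothness when $t \in (t_k, t_{k-1})$, the plan is to exhibit an explicit smooth atlas extracted from Proposition~\ref{prop:wall}. Let $\mathcal{Y} \cneq \pP_k \times C \times \Pic^k(C)$, which is smooth since $\pP_k$ is smooth by~\cite[Lemma~5.117]{KY} and the other factors are smooth. Using the universal pair $(\mathcal{F}, \sigma)$ on $S \times \pP_k$ and a normalized Poincaré line bundle on $C \times \Pic^k(C)$, I would construct a universal two-term complex $\mathcal{E}$ on $X \times \mathcal{Y}$ whose fiber over $((F,s), c, \lL)$ is quasi-isomorphic to $(p_C^{\ast}\lL \to i_{c\ast}F)$ with the canonical morphism induced by $s$. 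The resulting classifying morphism $\mathcal{Y} \to \mM_t^{\star}$ is surjective on $\mathbb{C}$-points by Proposition~\ref{prop:wall}, and its only ambiguity is the $\mathbb{G}_m$-rescaling of $s$, which corresponds exactly to the scalar automorphisms of $E$. This exhibits $\mM_t^{\star}$ as a $\mathbb{G}_m$-gerbe over the smooth variety $\mathcal{Y}$, hence smooth.

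The main obstacle is the careful stack-theoretic construction of $\mathcal{E}$ on $X \times \mathcal{Y}$, since the morphism $s$ is canonical only after choosing a trivialization of $\lL|_c$, a datum that varies over $\Pic^k(C) \times C$. One handles this either by working étale-locally on $\mathcal{Y}$ and gluing via the $\mathbb{G}_m$-ambiguity, or by first passing to the $\mathbb{G}_m$-torsor over $\mathcal{Y}$ parametrizing such trivializations, constructing $\mathcal{E}$ globally there, and then descending; in either case smoothness of $\mM_t^{\star}$ follows from that of $\mathcal{Y}$.
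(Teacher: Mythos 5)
Your proposal takes a genuinely different route from the paper and has a few real gaps, mostly concentrated in the smoothness claim, where you end up moving the hard deformation-theoretic content rather than avoiding it.

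The paper's proof has two steps. First, it quotes the constructibility of $\mM_t^{\star}\subset\mM$ from~\cite[Lemma~4.13(ii)]{TodK3}, which reduces Zariski-openness (and hence finite type) to openness in the analytic topology. Second, it establishes analytic openness and smoothness in one stroke by analyzing the local deformation space of a (poly)stable object as a critical locus: using the $\Aut(E_1\oplus E_2)=(\mathbb{C}^{\ast})^2$-invariant $A_\infty$-potential $w$, Lemma~\ref{lem:inj2} to verify Assumption~\ref{assum:cond}(ii), and the argument of Lemma~\ref{lem:smooth} to obtain the containments (\ref{small:deform}) and the smoothness of their left-hand sides. This handles both $t=t_k$ (the wall) and $t\in(t_k,t_{k-1})$, and it is the point of the d-critical machinery the paper is built around. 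Your proposal instead constructs an explicit smooth atlas $\pP_k\times C\times\Pic^k(C)$, borrowing smoothness of $\pP_k$ from Kawai--Yoshioka. This is a legitimate alternative strategy, and your finite-type argument via the bounded parametrization of objects is fine (where the paper instead gets finite type for free from constructibility plus openness).

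The gaps: first, your openness argument is too coarse. Invoking a ``standard openness argument'' from~\cite{TodK3} is not justified --- the paper cites that reference only for constructibility, and the openness in this abelian category $\aA$ is exactly what requires work; in particular you give no argument at $t=t_k$, where strictly semistable (non-polystable) objects exist and Proposition~\ref{prop:wall} does not apply. Second, and more seriously, your smoothness argument asserts without proof that $\mM_t^{\star}$ is a $\mathbb{G}_m$-gerbe over $\mathcal{Y}$, and concludes ``smoothness of $\mM_t^{\star}$ follows from that of $\mathcal{Y}$.'' Surjectivity on $\mathbb{C}$-points plus $\Aut(E)=\mathbb{G}_m$ does not establish the gerbe structure; you need that the classifying morphism (or its $\mathbb{G}_m$-torsor cover) is smooth and surjective as a map of stacks, i.e.\ that every $S$-family of $\mu_t^{\star}$-semistable objects étale-locally arises from a family of data $((F,s),c,\lL)$, and that the induced map on tangent/obstruction spaces identifies $\Ext^1_X(E,E)$ with $T_{\widetilde{\mathcal{Y}}}$. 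That identification is precisely the deformation-theoretic content the paper carries out via the critical-locus analysis, so your route does not avoid it, only relocates it. You flag the universal-family construction as the ``main obstacle,'' but this smoothness-of-the-atlas step is the deeper one. Relatedly, note that the paper proves the explicit identification of $M_k^{\star}$ with $\pP_k\times C\times J_C$ (Lemma~\ref{lem:isom:star}) \emph{after} Proposition~\ref{prop:stack} and uses its conclusion, so your argument, which effectively wants to prove that identification first, would need to restructure the surrounding logic and supply the deformation comparison independently.
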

\begin{proof}
By~\cite[Lemma~4.13]{TodK3} (ii), 
the substack $\mM_t^{\star} \subset \mM$
is constructible. Therefore 
for the first statement, 
it is enough to show that $\mM_t^{\star} \subset \mM$ is open in 
analytic topology. 

By Lemma~\ref{lem:wall}, 
for $t=t_k$ 
an object corresponding to 
a $\mathbb{C}$-valued point of 
$\mM_t^{\star}$ is a small deformation of an
object of the form (\ref{E:poly}). 
Let us set $V^+$, $V^-$ and $U$ 
by
\begin{align*}
&V^+=\Ext_X^1(E_1, E_2), \ V^-=\Ext_X^1(E_2, E_1), \\ 
&U=\Ext_X^1(E_1, E_1) \oplus \Ext_X^1(E_2, E_2).
\end{align*}
Then the analytic local deformation 
space of $E_1 \oplus E_2$ is given by the critical locus of 
some analytic function $w$ 
defined in an analytic neighborhood of $0 \in
V^+ \times V^- \times U$. 
Similarly to the case of stable pairs in (\ref{formal:w}), 
the function 
$w$ is
invariant under the
conjugate $\Aut(E_1 \oplus E_2)=(\mathbb{C}^{\ast})^2$-action 
on $V^+ \times V^- \times U$, 
so it is
 of the form
\begin{align*}
w=w^{(0)}(\vec{u})+\sum_{i, j}x_i y_j w_{ij}^{(1)}(\vec{u})
+\sum_{i, i', j, j'}x_i x_{i'} y_j y_{j'} w_{i i' j j'}^{(2)}(\vec{u}) +\cdots
\end{align*}
where $\vec{x}$, $\vec{y}$ and $\vec{u}$ are 
coordinates of $V^+$, $V^-$ and $U$
respectively. 
As in~\cite[Subsection~5.1]{Toddbir}, 
the function $w$ is constructed using 
the minimal $A_{\infty}$-structure on $D^b(X)$. 
By the construction in \textit{loc. cit. }, the function 
$w^{(0)}(\vec{u})$ is written as 
\begin{align*}
w^{(0)}(\vec{u})=
w^{(0)}_1(\vec{u}_1)+w^{(0)}_2(\vec{u}_2), \ 
\vec{u}=(\vec{u}_1, \vec{u}_2), \ 
\vec{u}_i  \in \Ext_X^1(E_i, E_i)
\end{align*}
such that the critical locus of 
$w^{(0)}_i(\vec{u}_i)$ 
in $\Ext_X^1(E_i, E_i)$
gives the 
local deformation space of $E_i$. 
Since the deformation space of $E_i$ 
is smooth, we may assume that $w^{(0)}(\vec{u})=0$. 

Similarly to Subsection~\ref{subsec:wallpair}, 
the function 
$w_{ij}^{(1)}(\vec{u})$ is written 
as (\ref{wij(1)})
such that the coefficients of the linear term
$a_{ijk}$ is determined by the 
triple product
\begin{align*}
V^+ \times V^- \times U  \to \mathbb{C}
\end{align*}
given by the composition and the Serre duality. 
Then by Lemma~\ref{lem:inj2} below, 
the coefficients $a_{ijk}$ satisfy the 
condition in Assumption~\ref{assum:cond} (ii). 
Therefore  
the argument of Lemma~\ref{lem:smooth} shows that 
\begin{align}\label{small:deform}
&(dw=0) \cap  (V^{+\ast} \times V^- \times U)
\subset (V^{+\ast} \times \{0\} \times U), \\
\notag
&(dw=0) \cap  (V^+ \times V^{-\ast} \times U)
\subset (\{0\} \times V^{-\ast} \times U).
\end{align}
This implies that any small deformation $E'$ of 
$E_1 \oplus E_2$ fits into one of the following exact sequences
in $\aA$
\begin{align}\label{small:deform2}
&0 \to i_{c'\ast}F'[-1] \to E' \to p_C^{\ast} \lL' \to 0, \\ 
\notag
&0 \to p_C^{\ast}\lL' \to E' \to i_{c\ast}F'[-1] \to 0
\end{align}
where $(F', \lL', c')$ is a small deformation of $(F, \lL, c)$, so that 
$[F'] \in U_k$ and $\lL' \in \Pic^k(C)$. 
Therefore $E'$ is $\mu_{t_k}^{\star}$-semistable,
and $\mM_{t_k}^{\star} \subset \mM$ is open 

Suppose that $t \in (t_{k}, t_{k-1})$, and take 
an object $E$ as in (\ref{twoterm})
which corresponds to a $\mathbb{C}$-valued point of 
$\mM_t^{\star}$. 
Then $E$ is isomorphic to a small deformation of the object
$E_1 \oplus E_2$ as above, which lies in the 
LHS of (\ref{small:deform}). 
Then any small deformation $E'$ of $E$ 
fits into a non-split sequence (\ref{small:deform2}). 
Therefore 
$E'$ is again $\mu_t^{\star}$-semistable by 
Proposition~\ref{prop:wall}, 
and $\mM_t^{\star} \subset \mM$ is open. 
Moreover the argument of Lemma~\ref{lem:smooth}
implies that the LHS of (\ref{small:deform}) is smooth, 
hence $\mM_t^{\star}$ is smooth. 
\end{proof}
We have used the following lemma, 
which is an analogy of Lemma~\ref{lem:inj}. 
\begin{lem}\label{lem:inj2}
For the objects $E_1$, $E_2$ in (\ref{E:poly}), the 
composition map 
\begin{align}\label{compose:inj}
\Ext_X^1(E_1, E_2) \otimes \Ext_X^1(E_2, E_1)
 \to \Ext_X^2(E_2, E_2)
\end{align}
is injective. 
\end{lem}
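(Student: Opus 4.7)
The plan is to follow the template of Lemma~\ref{lem:inj}: use adjunction along the codimension-one closed embedding $i_c\colon S\hookrightarrow X=S\times C$ (whose normal bundle is trivial) to reduce the composition map on $X$ to a Yoneda product on the K3 surface $S$, and then appeal to stability of $F$. Since $X$ is Calabi--Yau and $S$ is K3, one has $i_c^!(-)\cong \dL i_c^*(-)[-1]$, and the Koszul resolution $0\to p_C^*\oO_C(-c)\to\oO_X\to i_{c*}\oO_S\to 0$ yields $\dL i_c^*i_{c*}F\cong F\oplus F[1]$.

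Using these tools I would first compute
\begin{align*}
\Ext^1_X(E_1,E_2) &\cong H^0(S,F), \\
\Ext^1_X(E_2,E_1) &\cong \Ext^1_S(F,\oO_S), \\
\Ext^2_X(E_2,E_2) &\cong \Ext^1_S(F,F)\oplus\Ext^2_S(F,F).
\end{align*}
Next, I would argue that $\dL i_c^*(s)\colon \oO_S\to F\oplus F[1]$ has vanishing $F[1]$-component. The cleanest way is to factor $s$ through the unit $p_C^*\mathcal{L}\to i_{c*}\oO_S$ of the $\dL i_c^*\dashv i_{c*}$ adjunction and note that $\dL i_c^*$ of this unit is $\oO_S\to\oO_S\oplus\oO_S[1]$ with trivial second component, precisely because $\Hom_{D(S)}(\oO_S,\oO_S[1])=H^1(S,\oO_S)=0$ on the K3. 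Consequently the composition map $\Ext^1_X(E_1,E_2)\otimes\Ext^1_X(E_2,E_1)\to\Ext^2_X(E_2,E_2)$ becomes $(s,\bar\eta)\mapsto(s\cdot\bar\eta,\,0)$, landing in the first summand as the Yoneda product on $S$.

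It then suffices to prove that the post-composition map $s_*\colon \Ext^1_S(F,\oO_S)\to\Ext^1_S(F,F)$ is injective for every non-zero $s$. Since $F$ is torsion-free, $s$ is an injection giving $0\to\oO_S\to F\to F''\to 0$ with $F'':=F/s(\oO_S)$, and the long exact sequence of $\Ext^*_S(F,-)$ reads
\[
\Hom_S(F,F)\to\Hom_S(F,F'')\to\Ext^1_S(F,\oO_S)\xrightarrow{s_*}\Ext^1_S(F,F),
\]
so injectivity of $s_*$ amounts to $\Hom_S(F,F'')$ being one-dimensional, spanned by the quotient $p\colon F\twoheadrightarrow F''$.

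The main obstacle is this last step. The argument must use the $\mu$-stability of $F$ together with the primitive Mukai vector $v(F)=(k,H,k+n)$ on the K3 with $\Pic(S)=\mathbb{Z}H$: for any $\phi\colon F\to F''$, the kernel $\ker\phi\subset F$ is a rank-one torsion-free subsheaf, and stability of $F$ together with the rigidity of rank-one subsheaves with the given slope should force $\ker\phi=s(\oO_S)$, so $\phi=\lambda p$ for some scalar. This is the surface analog of the generic-nonvanishing argument (torsion-freeness on a CM curve) used in Lemma~\ref{lem:inj}.
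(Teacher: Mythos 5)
Your reduction to the Yoneda map
\[
H^0(S,F)\otimes\Ext^1_S(F,\oO_S)\to\Ext^1_S(F,F)
\]
is the right target and matches the map $(\ref{map:HF})$ in the paper, though the paper reaches it more economically via a Serre-dual surjection $\Ext^2_X(E_2,E_2)\twoheadrightarrow\Ext^1_S(F,F)$, without decomposing $\Ext^2_X$ or tracking the vanishing of the other component (one only needs injectivity after projecting, so it is cleaner not to discuss the $\Ext^2_S(F,F)$-component at all). Your extra bookkeeping with $\dL i_c^*i_{c*}F\cong F\oplus F[1]$ is fine in principle, but it quietly uses a \emph{functorial} choice of splitting to assert $\dL i_c^*(i_{c*}\bar\alpha)=\bar\alpha\oplus\bar\alpha[1]$, which requires justification.

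The real gap is in the endgame. First, you reduce the statement to the assertion that for every fixed non-zero $s$ the map $s_*\colon\Ext^1_S(F,\oO_S)\to\Ext^1_S(F,F)$ is injective. That is the per-factor statement; it does \emph{not} give injectivity of the linear map on the tensor product, which is what Lemma~\ref{lem:inj2} actually claims (note the contrast with Lemma~\ref{lem:inj}, which is only asserted to be ``injective on each factor''). Second, your reformulation as $\dim\Hom_S(F,F'')=1$ is literally equivalent to injectivity of $s_*$: applying $\Hom_S(F,-)$ to $0\to\oO_S\to F\to F''\to 0$ and using $\Hom(F,\oO_S)=0$ shows $\Hom(F,F'')/\mathbb{C}\cong\ker(s_*)$, so nothing has been gained. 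Finally, the sketched argument for that step assumes $\ker\phi$ is a rank-one subsheaf, which has no justification for $\phi\colon F\to F''$ when $\rank F=k>2$, and ``rigidity of rank-one subsheaves'' is not a stated or proved input; even granting it, you would still need stability of $F''$ to conclude $\phi$ is a scalar multiple of the quotient map.

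What the paper actually does is different and handles the full tensor-product injectivity in one stroke: it takes the universal extension $0\to\Ext^1_S(F,\oO_S)^\vee\otimes\oO_S\to\uU\to F\to 0$, uses the known $\mu$-stability of $\uU$ together with the universality (which gives $\Ext^1_S(\uU,\oO_S)=0$) to compute $\Hom(\uU,F)=\Hom(\uU,\uU)=\mathbb{C}$, and then applies $\Hom(-,F)$ to read off that the kernel of the Yoneda pairing is zero. This universal-extension trick is the missing idea in your proposal; without it (or an equivalent), the final step does not close, and even if it did it would only give the per-factor version, not the injectivity asserted in the lemma.
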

\begin{proof}
Note that we have
\begin{align*}
\Ext_X^1(E_1, E_2)=H^0(S, F), \ 
\Ext_X^1(E_2, E_1)=\Ext_S^1(F, \oO_S).
\end{align*}
We also have the surjection
\begin{align*}
\Ext_X^2(E_2, E_2)=\Ext_X^2(i_{c\ast}F, i_{c\ast}F) 
\twoheadrightarrow \Ext_S^1(F, F)
\end{align*}
which is Serre dual to the natural 
map 
$\Ext_S^1(F, F) \to \Ext_X^1(i_{c\ast}F, i_{c\ast}F)$. 
By composing it with (\ref{lem:inj2}), we obtain the 
composition map
\begin{align}\label{map:HF}
H^0(S, F) \otimes \Ext_S^1(F, \oO_S) \to \Ext_S^1(F, F). 
\end{align}
It is enough to 
show that the map (\ref{map:HF}) is injective.
Let us take the universal extension in 
$\Coh(S)$
\begin{align}\label{universal}
0 \to \Ext_S^1(F, \oO_S)^{\vee} \otimes \oO_S \to \uU 
\to F \to 0.
\end{align}
Then it is well-known that $\uU$ is a $\mu$-stable sheaf 
(see~\cite{MR1717621, noteBG}). 
Applying $\Hom(-, F)$ to the above exact sequence, we obtain 
the exact sequence
\begin{align}\label{exact:SF}
0 \to \mathbb{C} \to \Hom(\uU, F) \to 
H^0(S, F) \otimes \Ext_S^1(F, \oO_S) \to \Ext_S^1(F, F).
\end{align}
Since (\ref{universal})
is the universal extension, 
applying $\Hom(-, \oO_S)$ to (\ref{universal}) we obtain 
$\Ext_S^1(\uU, \oO_S)=0$. 
Then applying $\Hom(\uU, -)$ to (\ref{universal})
and using the stability of 
$\uU$, we obtain 
\begin{align*}
\Hom(\uU, F)=\Hom(\uU, \uU)=\mathbb{C}.
\end{align*} 
Therefore by the exact sequence (\ref{exact:SF}), we 
see that (\ref{map:HF}) is injective. 
\end{proof}

For $t \in \mathbb{R}_{>0}$, let 
\begin{align}\label{good:M}
\mM_t^{\star} \to M_t^{\star}
\end{align}
be the good moduli space for 
the stack $\mM_t^{\star}$, 
which exists by~\cite{AHLH}. 
The good 
moduli space $M_t^{\star}$ is an algebraic 
space of finite type, which parametrizes
$\mu_t^{\star}$-polystable objects in $\aA$
satisfying (\ref{cond:ch}), 
i.e. direct sums of $\mu_t^{\star}$-stable 
objects with $\mu_t^{\star}(-)=0$.
By Proposition~\ref{prop:wall}, 
the moduli space $M_t^{\star}$ is constant if $t \in (t_k, t_{k-1})$
for some $k$.
So we can write
\begin{align*}
M_k^{\star} \cneq M_t^{\star}, \ 
t \in (t_k, t_{k-1}). 
\end{align*} 
By Proposition~\ref{prop:wall}, 
$M_k^{\star}$ consists of 
$\mu_t^{\star}$-stable objects
for $t \in (t_k, t_{k-1})$
and also smooth by 
Proposition~\ref{prop:stack}. 

Recall that $J_C \cneq \pP ic^0(C)$ is defined 
to be the moduli space of degree zero line bundles on $C$, 
which is isomorphic to $C$ itself as $C$ is an elliptic curve. 
In the $k=1$ case, 
we can describe $M_1^{\star}$ 
by the stable pair moduli space: 
\begin{lem}
For $\beta=i_{c\ast}[H]$, we have 
the isomorphism 
\begin{align}
\label{isom:k=0:2}
P_{-n}(X, \beta) \times J_C \stackrel{\cong}{\to}
M_1^{\star}
\end{align}
given by 
\begin{align}
\label{map:k=1}
((\oO_X \to i_{c\ast}F'), L)
&\mapsto 
p_C^{\ast}L \otimes \mathbb{D}(\oO_X \to i_{c\ast}F'). 
 \end{align}
Here $\mathbb{D} \cneq \dR \hH om(-, \oO_X)$
is the derived dual. 
\end{lem}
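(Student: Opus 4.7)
My plan is to combine Grothendieck duality on the CY 3-fold $X$ with the two alternative presentations of an object of $\aA$ furnished by (the proof of) Lemma~\ref{A:exact}, so as to match the PT complex with the distinguished form of objects in $M_1^{\star}$ provided by Proposition~\ref{prop:wall} at $k=1$. To begin, let $I^{\bullet} = [\oO_X \to i_{c\ast}F']$ represent a PT pair in $P_{-n}(X, \beta)$, with $F'$ pure one-dimensional on $S$ of Euler characteristic $-n$, and let $L \in J_C$. The triangle $i_{c\ast}F'[-1] \to I^{\bullet} \to \oO_X$ together with Grothendieck duality along $i_c$ (whose normal bundle is trivial, so that $i_c^{!}\oO_X \cong \oO_S[-1]$) yields
\[
\oO_X \to \mathbb{D}(I^{\bullet}) \to i_{c\ast}F'^D[-1],
\]
where $F'^D := \eE xt^1_S(F', \oO_S)$ is pure one-dimensional with $\chi(F'^D) = n$. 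Thus $\mathbb{D}(I^{\bullet})$ is an extension in $\aA$ of $i_{c\ast}F'^D[-1] \in \bB$ by $\oO_X = p_C^{\ast}\oO_C$, and its extension class in $\Ext^2_X(i_{c\ast}F'^D, \oO_X) \cong \Ext^1_S(F'^D, \oO_S) \cong H^0(S, F')$ (using the local-to-global spectral sequence together with the biduality $F'^{DD} \cong F'$ for pure sheaves on K3) is the original PT section $s$.

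The Serre construction applied to the class $s \in \Ext^1_S(F'^D, \oO_S)$ produces a short exact sequence $0 \to \oO_S \to F'' \to F'^D \to 0$ on $S$, and the argument in the proof of Lemma~\ref{A:exact} then supplies a second presentation of the same object
\[
0 \to i_{c\ast}F''[-1] \to \mathbb{D}(I^{\bullet}) \to p_C^{\ast}\oO_C(c) \to 0
\]
in $\aA$. PT stability forces $s$ to be non-vanishing on each irreducible component of $\Supp(F')$, from which a standard local analysis of the Serre construction shows $F''$ is torsion-free; since $c_1(F'') = [H]$ is primitive on $S$ and $v(F'') = \mathbf{v}_1$, $F''$ is automatically $H$-Gieseker stable, so $[F''] \in U_1$. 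Tensoring with $p_C^{\ast}L$ then produces an object of the form $[p_C^{\ast}\lL \to i_{c\ast}F]$ with $\lL = L \otimes \oO_C(c) \in \Pic^1(C)$ and $[F] \in U_1$, which by Proposition~\ref{prop:wall} is $\mu_t^{\star}$-stable for every $t \in (t_1, \infty)$, hence defines a point of $M_1^{\star}$. Because the recipe uses only $\mathbb{D}$ and tensoring by a line bundle, both compatible with flat base change, it extends to a morphism $\Phi \colon P_{-n}(X, \beta) \times J_C \to M_1^{\star}$.

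For the inverse, given $E = [p_C^{\ast}\lL \to i_{c\ast}F] \in M_1^{\star}$, the differential furnishes a non-zero section $\bar s \colon \oO_S \to F$ with cokernel $G := F/\bar s(\oO_S)$ pure one-dimensional on $S$. Setting $F' := G^D$, taking for $s$ the section of $F'$ that corresponds via $\Ext^1_S(G, \oO_S) \cong H^0(S, F')$ to the extension class of $0 \to \oO_S \to F \to G \to 0$, and letting $L := \lL \otimes \oO_C(-c) \in J_C$, one recovers both a PT pair $(\oO_X \to i_{c\ast}F') \in P_{-n}(X, \beta)$ and the required element of $J_C$. The identities $\Phi \circ \Psi = \id$ and $\Psi \circ \Phi = \id$ follow from $\mathbb{D}^2 \cong \id$ combined with the symmetric role of the two presentations of $E \in \aA$ in Lemma~\ref{A:exact}. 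The main anticipated obstacle is the verification of torsion-freeness of $F''$ from PT-stability, which I expect to be the technical heart of the argument; once this is established, the remaining functoriality and descent claims follow routinely.
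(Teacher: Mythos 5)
Your proof is correct and takes essentially the same route as the paper's: dualize the PT complex, exhibit $p_C^{\ast}L \otimes \mathbb{D}(I^{\bullet})$ via its two presentations in $\aA$ (the second supplied by the proof of Lemma~\ref{A:exact}), and invoke Proposition~\ref{prop:wall} at $k=1$. The differences are in packaging rather than substance. Where the paper simply cites~\cite[Remark~9.8]{Toddbir} for the characterization of objects of the form $p_C^{\ast}L \otimes \mathbb{D}(I^{\bullet})$, you re-derive the needed direction by Grothendieck duality along $i_c$; and where the paper closes by observing that (\ref{map:k=1}) is bijective on $\mathbb{C}$-points and both sides of (\ref{isom:k=0:2}) are smooth, you instead construct an explicit inverse $\Psi$, which is fine in principle but carries the extra burden of checking that $\Psi$ is a morphism of algebraic spaces and not merely a map on closed points — the smoothness-plus-bijectivity argument sidesteps this. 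Two smaller remarks: the torsion-freeness and stability of $F''$ that you flag as the ``technical heart'' is not a new obstacle; since $F'^{D}$ has primitive Mukai vector $\mathbf{v}_0$ it lies in $U_0$, so the proof of Lemma~\ref{A:exact} (which defers the stability of the extension to~\cite[Lemma~7.5]{TodK3}) applies verbatim and already yields $[F'']\in U_1$. And your stated reason ``$c_1(F'')=[H]$ is primitive, hence $F''$ is $H$-Gieseker stable'' is not the right inference: what is used is that a rank-one torsion-free sheaf on a smooth surface is automatically Gieseker stable with respect to any polarization, with primitivity playing no role at this step.
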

\begin{proof}
First we need to 
show that 
the map (\ref{map:k=1}) is well-defined, i.e. 
the object 
\begin{align}\label{pcast}
p_C^{\ast}L \otimes \mathbb{D}(\oO_X \to i_{c\ast}F')
\in \aA
\end{align}
in the RHS of (\ref{map:k=1})
corresponds to a point in $M_1^{\star}$. 
By~\cite[Remark~9.8]{Toddbir},
an object in $E\in \aA$ is of the form (\ref{pcast})
if and only if $E$ fits into an exact sequence
in $\aA$
\begin{align}\label{pCLE}
0 \to p_C^{\ast}L \to E \to i_{c\ast}F''[-1] \to 0
\end{align}
where $F''$ is a pure one dimensional 
sheaf on $S$ such that 
$\Hom(T[-1], E)=0$ for any 
one dimensional sheaf $T$ on $X$. 
Moreover in this case, 
we have $i_{c\ast}F''=\eE xt_X^2(i_{c\ast}F', \oO_X)$. 
The proof of Lemma~\ref{A:exact} shows that $E$ fits into 
an exact sequence
\begin{align}\label{pCLE2}
0 \to i_{c\ast}F'''[-1] \to E \to p_C^{\ast}L(c) \to 0
\end{align}
for $[F'''] \in U_1$. 
Therefore $E$ is isomorphic to 
$(p_C^{\ast}L(c) \stackrel{s}{\to} i_{c\ast}F''')$
for a non-zero $s$, 
hence gives a point in $M_1^{\star}$ by 
Proposition~\ref{prop:wall}. 

Conversely by Proposition~\ref{prop:wall}, 
any object $[E] \in M_1^{\star}$
fits into an exact sequence of the form 
(\ref{pCLE2}).
By taking the cohomologies of $E$, 
 it also fits into a non-split
 exact sequence of the form (\ref{pCLE}).
On the other hand,
 by the exact sequence (\ref{pCLE2}) 
we see that
$\Hom(T[-1], E)=0$ for any 
one dimensional sheaf $T$ on $X$.
Therefore $E$ is of the form (\ref{pcast}), 
and the map (\ref{map:k=1})
is bijective on closed points. 
Since both sides of (\ref{isom:k=0:2}) 
are smooth, 
it is an isomorphism.  
\end{proof}

In general for $k>0$, we can 
describe $M_k^{\star}$ in terms of 
pair moduli spaces $\pP_k$ on $S$: 
\begin{lem}\label{lem:isom:star}
For $k>0$, we have an isomorphism
\begin{align}\label{isom:star1}
\pP_k \times (C \times J_C) 
\stackrel{\cong}{\to}
M_k^{\star}
\end{align}
given by 
\begin{align}\label{isom:form}
((\oO_S \to F), c, L) \mapsto 
\left(
p_C^{\ast}(\oO_C(k[c]) \otimes L)
\to i_{c\ast}F\right).
\end{align}
\end{lem}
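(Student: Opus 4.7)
The plan is to imitate the argument establishing the previous isomorphism (\ref{isom:k=0:2}): by Proposition~\ref{prop:wall}, the closed points of $M_k^{\star}$ are precisely the isomorphism classes of two-term complexes $(p_C^{\ast}\lL \stackrel{s}{\to} i_{c\ast}F)$ with $[F]\in U_k$, $\lL\in\Pic^k(C)$, $c\in C$ and $s\neq 0$. So the proof decomposes into (i) producing an algebraic morphism $T \cneq \pP_k \times C \times J_C \to M_k^{\star}$ realizing the formula (\ref{isom:form}), (ii) checking it is bijective on closed points, and (iii) invoking smoothness.

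First, I would construct a universal two-term complex on $X \times T$. The inputs are the universal stable pair $(\oO \to \widetilde{\fF})$ on $S \times \pP_k$, a normalized Poincar\'e line bundle $\mathcal{P}$ on $C \times J_C$, and the line bundle $\oO_{C \times C}(k\Delta)$ on $C \times C$; pulling these back to $X \times T$ (with the second $C$ factor of $T$ playing the role of the varying point $c$) and tensoring produces a line bundle $\widetilde{\lL}$ restricting on each closed fiber $(\mathfrak{f}, c_0, L_0)$ of $T$ to $p_C^{\ast}(\oO_C(k[c_0]) \otimes L_0)$, together with a sheaf restricting to $i_{c_0 \ast}F_{\mathfrak{f}}$ along the incidence divisor where the first $C$ factor of $X$ equals the second $C$ factor of $T$. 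Composing the canonical restriction $\widetilde{\lL} \twoheadrightarrow \widetilde{\lL}|_{\text{incidence}}$ with the pulled-back section yields a $T$-family whose every closed fiber is of the form (\ref{twoterm}) and hence $\mu_{t_k}^{\star}$-stable by Proposition~\ref{prop:wall}. Combined with Proposition~\ref{prop:stack}, this defines a $T$-point of $\mM_{t_k}^{\star}$ and, via the universal property of the good moduli space (\ref{good:M}), the desired morphism $T \to M_k^{\star}$.

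Next, I would verify bijectivity on closed points. From any representative complex $(p_C^{\ast}\lL \to i_{c\ast}F)$ arising in Proposition~\ref{prop:wall}, the support determines $c \in C$, the second term determines $[F]\in U_k$, and the first term determines $\lL\in\Pic^k(C)$, whence $L \cneq \lL \otimes \oO_C(-k[c]) \in J_C$ is recovered. Via the adjunction $\Hom(p_C^{\ast}\lL, i_{c\ast}F) \cong \Hom(\oO_S, F)\otimes \lL_c^{\vee}$, the morphism $s$ corresponds, modulo the $\mathbb{C}^{\ast}$-rescaling inherent to both $\pP_k$ and $M_k^{\star}$, to the section $\oO_S \to F$ of the original stable pair. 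This yields a two-sided inverse on closed points.

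The main obstacle is step (i): upgrading the set-theoretic recipe of (\ref{isom:form}) to a genuine morphism of algebraic spaces, which requires assembling the universal family above coherently and checking it factors through the good moduli space. Once this is in place the conclusion is immediate: $\pP_k \times C \times J_C$ is a product of smooth varieties, $M_k^{\star}$ is smooth by Proposition~\ref{prop:stack}, and a dimension count using (\ref{dim:Pk}) and (\ref{dim:Mk}) gives
\begin{align*}
\dim(\pP_k \times C \times J_C) = 2(g-k^2-kn) + 2k + n + 1 = \dim M_k^{\star},
\end{align*}
so a bijective morphism between smooth irreducible varieties of equal dimension is an isomorphism.
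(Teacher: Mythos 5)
Your proposal is correct and takes essentially the same approach as the paper, which states simply that the formula defines a morphism of smooth algebraic spaces, bijective on closed points by Proposition~\ref{prop:wall}, and hence an isomorphism. Your more detailed version (constructing the universal family to produce the morphism, exhibiting the inverse on closed points, and noting the dimension count as a sanity check) fills in precisely what the paper leaves implicit.
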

\begin{proof}
The map 
(\ref{isom:form}) is a morphism of smooth 
algebraic spaces which is bijective on closed points
by Proposition~\ref{prop:wall}.
Hence (\ref{isom:form}) is an isomorphism. 
\end{proof}
We also set
\begin{align*}
U_k^{\star} \cneq (M_{t_k}^{\star})^{\rm{red}}, \ 
k \in \mathbb{Z}_{> 0}. 
\end{align*}
By the open immersions
\begin{align*}
\mM_{t_k+\varepsilon}^{\star} \subset \mM_{t_k}^{\star} \supset
\mM_{t_k-\varepsilon}^{\star}
\end{align*}
for $0<\varepsilon \ll 1$, 
and noting that $M_k^{\star}$ is smooth, 
we have the induced morphisms
\begin{align}\label{dia:Mstar}
\xymatrix{
M_k^{\star} \ar[rd] _-{\pi_k^{\star+}}& & M_{k+1}^{\star} \ar[ld]^-{\pi_k^{\star-}} \\
& U_k^{\star}. &
}
\end{align}

\begin{lem}\label{lem:isom:star2}
(i) We have an isomorphism
\begin{align}\label{isom:star2}
U_k \times (C \times J_C) 
\stackrel{\cong}{\to}
U_k^{\star}.
\end{align}

(ii)
Under the isomorphisms (\ref{isom:star1}),
(\ref{isom:star2}), 
 the diagram (\ref{dia:Mstar}) is identified with the diagram 
$(\ref{dia:P+}) \times \mathrm{id}_{C \times J_C}$.   
\end{lem}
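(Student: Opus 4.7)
The plan is as follows. For part (i), by Lemma~\ref{lem:wall} the closed points of $U_k^{\star} = (M_{t_k}^{\star})^{\rm{red}}$ correspond bijectively to $S$-equivalence classes of $\mu_{t_k}^{\star}$-polystable objects
\[
E_1 \oplus E_2, \quad E_1 = p_C^{\ast}\lL, \quad E_2 = i_{c\ast}F[-1],
\]
with $[F] \in U_k$, $c \in C$, and $\lL \in \pP ic^k(C)$. The family of isomorphisms $\pP ic^k(C) \cong J_C$ given fibrewise over $C$ by $\lL \mapsto \lL(-k[c])$ yields a set-theoretic bijection $\Xi \colon U_k \times C \times J_C \to U_k^{\star}$. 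To promote this to a morphism of algebraic spaces, I would use a (twisted if necessary) universal family $\mathcal{F}$ on $U_k \times S$ together with a Poincar\'e line bundle on $C \times J_C$ to build a flat family of $\mu_{t_k}^{\star}$-polystable objects in $\aA$ over $U_k \times C \times J_C$; this family determines a morphism to the Artin stack $\mM_{t_k}^{\star}$, and composition with (\ref{good:M}) followed by reduction yields the desired morphism $\Xi$.

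To show that $\Xi$ is an isomorphism, first note that $\Xi$ is proper and bijective on closed points, so by Zariski's main theorem it suffices to check that $\Xi$ is \'etale at every closed point. Here I would invoke the formal local analysis in the proof of Proposition~\ref{prop:stack}: near a polystable object $E_1 \oplus E_2$, the stack $\mM_{t_k}^{\star}$ is modelled by the critical locus of a function $w$ on $V^+ \times V^- \times U$, where $V^\pm = \Ext^1_X(E_{1,2}, E_{2,1})$ and $U = \Ext^1_X(E_1, E_1) \oplus \Ext^1_X(E_2, E_2)$. Because the deformation spaces of $E_1$ and $E_2$ are themselves smooth, one may take $w^{(0)} = 0$, and Lemma~\ref{lem:inj2} ensures that the bilinear form in (\ref{wij(1)}) is injective on each factor. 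The analysis in (\ref{small:deform}) then shows that after taking the $(\mathbb{C}^{\ast})^2$-quotient and reducing, the formal completion of $U_k^{\star}$ at $[E_1 \oplus E_2]$ is identified with the formal completion of $U$ at $0$, which in turn matches the formal completion of $U_k \times C \times J_C$ at the corresponding point via the natural identifications $\Ext^1_X(E_1, E_1) = T_{[\lL]} \pP ic^k(C)$ and $\Ext^1_X(E_2, E_2) = T_{([F], c)}(U_k \times C)$.

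For (ii), both sides are smooth reduced schemes and the isomorphisms (\ref{isom:star1}), (\ref{isom:star2}) are already in hand, so compatibility reduces to a point-wise verification. A point of $M_k^{\star}$ identified via (\ref{isom:star1}) with $((\oO_S \to F), c, L) \in \pP_k \times C \times J_C$ corresponds to the stable object $(p_C^{\ast}(\oO_C(k[c]) \otimes L) \to i_{c\ast} F)$; its $\mu_{t_k}^{\star}$-graded polystable object is $p_C^{\ast}(\oO_C(k[c]) \otimes L) \oplus i_{c\ast}F[-1]$, which under (\ref{isom:star2}) corresponds to $([F], c, L) = (\pi_k^+ \times \id_{C \times J_C})((\oO_S \to F), c, L)$. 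Similarly, a point of $M_{k+1}^{\star}$ corresponding to $((\oO_S \to F'), c, L')$ is identified with $(p_C^{\ast}(\oO_C((k+1)[c]) \otimes L') \to i_{c\ast} F')$, whose $\mu_{t_k}^{\star}$-graded object, computed via (\ref{exact:rEG}) with $r = k+1$, is $p_C^{\ast}(\oO_C(k[c]) \otimes L') \oplus \Cok(\oO_S \to F')[-1]$, corresponding under (\ref{isom:star2}) to $(\Cok(\oO_S \to F'), c, L') = (\pi_k^- \times \id_{C \times J_C})((\oO_S \to F'), c, L')$.

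The main obstacle will be the scheme-theoretic step in (i): producing $\Xi$ as a genuine morphism (not merely a set-theoretic bijection) and verifying \'etaleness. The family construction requires careful handling of the universal sheaf on $U_k$, which exists only as a twisted sheaf when $\mathbf{v}_k$ is non-primitive (though the twistings cancel out in the polystable object $p_C^{\ast}\lL \oplus j_\ast \mathcal{F}[-1]$ when paired suitably with the Poincar\'e bundle), and the \'etaleness verification rests decisively on the non-degeneracy supplied by Lemma~\ref{lem:inj2}.
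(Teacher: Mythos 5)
Your proof of part (ii) agrees with the paper's: both verify the compatibility of the diagrams by chasing a closed point, and in particular the computation of $\pi_k^{\star-}$ via the exact sequence (\ref{exact:rEG}) for $r=k+1$ is exactly what the paper does.

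For part (i), your strategy diverges from the paper's and has a gap at the crucial step. The paper constructs the morphism $(F,c,L) \mapsto p_C^{\ast}(\oO_C(k[c]) \otimes L) \oplus i_{c\ast}F[-1]$ from $U_k \times C \times J_C$ into the good moduli space $M_{t_k}^{\star}$, observes it is bijective on closed points, and then invokes the argument of~\cite[Lemma~9.21]{Toddbir} to conclude it is a \emph{closed immersion}; since a bijective closed immersion into a scheme becomes an isomorphism after passing to the reduction of the target, one gets $(M_{t_k}^{\star})^{\rm{red}} \cong U_k \times C \times J_C$ directly. You instead propose to show the map is proper, bijective, and \'etale, and your \'etaleness verification rests on the assertion that ``the formal completion of $U_k^{\star}$ at $[E_1 \oplus E_2]$ is identified with the formal completion of $U$ at $0$.'' This is not established by the analysis you cite. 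The inclusions (\ref{small:deform}) control $\{dw=0\}$ only on the open loci $V^{+\ast}\times V^-\times U$ and $V^+\times V^{-\ast}\times U$; they say nothing about the critical locus near $\vec{x}=\vec{y}=0$, which is precisely the locus of closed $(\mathbb{C}^{\ast})^2$-orbits relevant to the good moduli space. The good moduli space is $\Spec$ of the $(\mathbb{C}^{\ast})^2$-invariant functions on $\{dw=0\}$, and identifying its reduction with $U$ requires understanding this invariant ring, not merely the restriction of the critical locus to the semistable charts. Even if $\{dw=0\}$ meets $\{\vec{x}=\vec{y}=0\}$ in exactly $U$ set-theoretically, the invariant ring could a priori have generators in the $x_iy_j$ variables contributing nilpotents or worse to the quotient, and it is exactly this possibility that the paper's reference handles. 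Without a substitute for~\cite[Lemma~9.21]{Toddbir}, your \'etaleness step (and hence the isomorphism) is not justified; one workaround would be to directly show your morphism is a closed immersion, which after all is what the paper does.
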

\begin{proof}
(i) By Lemma~\ref{lem:wall}, a point 
in $M_{t_k}^{\star}$ corresponds to a 
$\mu_{t_k}^{\star}$-polystable object of the form
(\ref{E:poly}). 
Therefore we have the morphism
\begin{align*}
U_k \times (C \times J_C) \to M_{t_k}^{\star}
\end{align*}
defined by 
\begin{align}\label{mor:UM}
(F, c, L)
 \mapsto p_C^{\ast}(\oO_C(k[c]) \otimes L) \oplus i_{c\ast}F[-1].
\end{align}
The morphism (\ref{mor:UM}) 
a bijection on closed points. 
Moreover the argument of~\cite[Lemma~9.21]{Toddbir}
shows that 
(\ref{mor:UM}) is a closed immersion. 
Therefore we have the isomorphism (\ref{isom:star2})
by taking the reduced parts of (\ref{mor:UM}). 

(ii) 
The statement $\pi_k^{\star +}=\pi_k \times (\id_{C \times J_C})$ 
is obvious from the descriptions of 
the maps (\ref{isom:form}), (\ref{mor:UM}). 
As for $\pi_k^{\star -}$, 
let us take a point
\begin{align*}
((\oO_S \stackrel{s'}{\to} F'), c, L')
\in \pP_{k+1} \times (C \times J_C).
\end{align*} 
Under the map (\ref{isom:star1}), 
it corresponds to a point in $M_{k+1}^{\star}$ of the form 
\begin{align*}
E'=(p_C^{\ast}\lL'\to i_{c\ast}F') \in \aA, \ 
\lL'=\oO_C((k+1)[c]) \otimes L' \in \Pic^{k+1}(C).
\end{align*}
By taking the cohomologies of $E'$, 
we have an exact sequence in $\aA$
\begin{align*}
0 \to p_C^{\ast}\lL'(-c) \to E' \to G[-1] \to 0
\end{align*}
where $G$ is the cokernel of $s'$. 
Then the map $\pi_k^{\star -}$ is given by
\begin{align*}
\pi_k^{\star -}(E')=
p_C^{\ast}\lL'(-c) \oplus G[-1]. 
\end{align*}
As $\lL'(-c)=\oO_C(k[c]) \otimes L'$, 
it comes from $(G, c, L') \in U_k \times (C \times J_C)$
under the map (\ref{mor:UM}). 
Therefore 
the identity 
$\pi_k^{\star - }=\pi_k^- \times \id_{C \times J_C}$
also 
holds. 
\end{proof}

\begin{prop}\label{prop:MU}
The diagram (\ref{dia:Mstar})
satisfies
Assumption~\ref{assum:cond}
by setting 
\begin{align*}
M^+=M_k^{\star}, \ M^-=M_{k+1}^{\star}, \ 
U=U_k^{\star}, \ 
\pi^{\pm}=\pi_k^{\star\pm}.
\end{align*}
\end{prop}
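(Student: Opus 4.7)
The plan is to mirror the proof of Theorem~\ref{thm:PTpair} and reduce each condition in Assumption~\ref{assum:cond} to results already in hand. Fix $p \in U_k^{\star}$; by Lemma~\ref{lem:isom:star2}(i) it corresponds to a triple $(F, c, L) \in U_k \times (C \times J_C)$, and set $E_1 \cneq p_C^{\ast}(\oO_C(k[c]) \otimes L)$ and $E_2 \cneq i_{c\ast}F[-1]$, so the $\mu_{t_k}^{\star}$-polystable object over $p$ is $E_1 \oplus E_2$. By the analysis in the proof of Proposition~\ref{prop:stack}, the formal deformation space of $E_1 \oplus E_2$ is the critical locus of an $\Aut(E_1 \oplus E_2) = (\mathbb{C}^{\ast})^2$-invariant function $\widehat{w}$ on $V^+ \times V^- \times U$ with $V^{\pm} \cneq \Ext^1_X(E_{\mp}, E_{\pm})$ and $U \cneq \Ext^1_X(E_1, E_1) \oplus \Ext^1_X(E_2, E_2)$; smoothness of the deformation spaces of $E_1$ and $E_2$ lets us arrange $w^{(0)}(\vec{u}) = 0$.

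For Assumption~\ref{assum:cond}(i), the description (\ref{small:deform2}) obtained in the proof of Proposition~\ref{prop:stack}, combined with the characterization in Proposition~\ref{prop:wall}, identifies the two $\mathbb{C}^{\ast}$-GIT quotients of $\{d\widehat{w}=0\}$ as the formal neighborhoods of $p$ in $M_k^{\star}$ and $M_{k+1}^{\star}$ respectively, realizing the diagram (\ref{dia:Mstar}) as a formal d-critical simple flip in the sense of Definition~\ref{defi:dsflip}. For Assumption~\ref{assum:cond}(ii), as in Subsection~\ref{subsec:wallpair} the coefficient $a_{ijk}$ of the linear term of $w_{ij}^{(1)}(\vec{u})$ is determined by the triple product on $D^b(X)$, which by Serre duality on the Calabi-Yau threefold $X$ is equivalent to the composition
\begin{align*}
\Ext^1_X(E_1, E_2) \otimes \Ext^1_X(E_2, E_1) \to \Ext^2_X(E_2, E_2).
\end{align*}
Lemma~\ref{lem:inj2} shows this is injective as a linear map on the tensor product; since injectivity of a bilinear map on finite-dimensional vector spaces automatically implies injectivity on each factor, the map $\psi$ in (\ref{psi}) has the required property.

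For Assumption~\ref{assum:cond}(iii), the product decomposition $M_k^{\star} \cong \pP_k \times (C \times J_C)$ from Lemma~\ref{lem:isom:star} reduces the problem to producing a $\pi_k^+$-ample line bundle on $\pP_k$ compatible with the local chart. The construction of Lemma~\ref{lem:O1} adapts directly: for a sufficiently ample divisor $H'$ on $S$, the map $(\oO_S \to E) \mapsto (\oO_S \to E \hookrightarrow E(H'))$ embeds $\pP_k$ into a projective bundle over $U_k$, and pulling back the tautological line bundle to $\pP_k$ and then to $M_k^{\star}$ via the first projection gives the desired $\oO_{M^+}(1)$. Its restriction to the formal neighborhood of $p$ matches $\oO_{\widehat{Y}^+_U}(1)$ because the fiber of $\pi_k^{\star +}$ over $p$ is canonically $\mathbb{P}(V^+)$ with $V^+ = \Ext^1_X(E_1, E_2) \cong H^0(S, F)$. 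The main technical point is (i), whose verification is a direct transcription of the minimal cyclic $A_{\infty}$-model argument underlying Proposition~\ref{prop:stack} together with the openness of semistability established there; (ii) and (iii) then follow formally from Lemma~\ref{lem:inj2} and Lemma~\ref{lem:isom:star}.
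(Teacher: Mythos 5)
Your proof is correct and follows essentially the same strategy as the paper's: verify the three conditions of Assumption~\ref{assum:cond} using the local potential function from the proof of Proposition~\ref{prop:stack}, Lemma~\ref{lem:inj2} for condition (ii), and the argument of Lemma~\ref{lem:O1} for condition (iii).

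The only substantive difference is in how condition (i) is handled. The paper establishes the formal d-critical simple flip structure by citing external results of~\cite{Toddbir} (Example~6.3 and Theorem~9.22), whereas you reconstruct it internally from the analysis in Proposition~\ref{prop:stack}, arguing that (\ref{small:deform2}) together with Proposition~\ref{prop:wall} identifies the two $\mathbb{C}^{\ast}$-GIT quotients of $\{d\widehat{w}=0\}$ with the formal neighborhoods of $p$ in $M_k^{\star}$ and $M_{k+1}^{\star}$. This is the right idea, but note that producing the commutative diagram (\ref{relchart2}) with the isomorphisms (\ref{isom:iota}) of d-critical loci requires matching the intrinsic d-critical structure on the moduli spaces (which comes from the $(-1)$-shifted symplectic structure) with the one defined by the function $\widehat{w}$; the cited results of~\cite{Toddbir} do this bookkeeping, and your sketch glosses over it. One point you make explicit that the paper leaves tacit is the implication from Lemma~\ref{lem:inj2} to the injectivity-on-each-factor requirement of Assumption~\ref{assum:cond}(ii): since $\alpha \otimes \beta \neq 0$ whenever both $\alpha \neq 0$ and $\beta \neq 0$, injectivity of the composition map on the tensor product (the statement of Lemma~\ref{lem:inj2}, which is formally stronger than the corresponding Lemma~\ref{lem:inj}) indeed yields the weaker per-factor injectivity; this is a correct and welcome clarification. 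You should also note, as the paper does implicitly via the proof of Proposition~\ref{prop:stack}, that $U = \Ext^1_X(E_1,E_1)\oplus \Ext^1_X(E_2,E_2)$ here (not just $\Ext^1_X(E_2,E_2)$ as in the stable pair case), so the relevant linear dual target is $\Ext^2_X(E_1,E_1)\oplus\Ext^2_X(E_2,E_2)$; projecting to the second factor and invoking Lemma~\ref{lem:inj2} suffices, which is precisely what both you and the paper do.
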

\begin{proof}
Note that the diagram (\ref{dia:Mstar}) is a wall-crossing diagram 
in the CY 3-fold $X$.
Together with the fact that 
a point in $U_k^{\star}$ corresponds to a $\mu_t^{\star}$-polystable 
object (\ref{E:poly}), it is a
d-critical simple flip by~\cite[Example~6.3]{Toddbir}
(also see the argument of~\cite[Theorem~9.22]{Toddbir}). 
Therefore Assumption~\ref{assum:cond} (i) holds. 
By Lemma~\ref{lem:inj2}
the Assumption~\ref{assum:cond} (ii) holds, 
and Assumption~\ref{assum:cond} (iii) 
holds by the same argument of Lemma~\ref{lem:O1}. 
\end{proof}

\subsection{Proof of Theorem~\ref{thm:K3}}\label{subsec:proofK3}
\begin{proof}
We first prove Theorem~\ref{thm:K3} for $k>0$. 
Let $\wW_k$ be the fiber product of the diagram (\ref{dia:Mstar}), and 
$\oO_{M_k^{\star}}(1)$ be 
a $\pi_k^{\star+}$-ample line bundle on $M_k^{\star}$
satisfying Assumption~\ref{assum:cond} (iii)
for the diagram (\ref{dia:Mstar}). 
By Theorem~\ref{thm:main} and Proposition~\ref{prop:MU}, 
we have the fully-faithful functors
\begin{align}\label{funct:star}
\Phi^{\oO_{\wW_k}}
&\colon D^b(M_{k+1}^{\star}) \hookrightarrow D^b(M_k^{\star}), \\
\notag
\Upsilon^{i}_k &\colon 
D^b(U_k^{\star}) \hookrightarrow D^b(M_k^{\star}).
\end{align}
Here $\Upsilon^{i}_k$ is given by 
$\dL(\pi_k^{\star+})^{\ast}(-) \otimes \oO_{M_k^{\star}}(i)$. 
Moreover we have the SOD
\begin{align}\label{SOD:star}
D^b(M_k^{\star})=\langle \Imm \Upsilon^{-2k-n+1}_k, \ldots, 
\Imm \Upsilon^{0}_k, \Imm 
\Phi^{\oO_{\wW_k}} \rangle.
\end{align}
Then by Lemma~\ref{lem:isom:star2} (ii), 
the functors (\ref{funct:star})
are linear over $C \times J_C$ under the 
isomorphisms (\ref{isom:star1}), (\ref{isom:star2}), 
so Theorem~\ref{thm:K3} for $k>0$
follows by 
restricting the
SOD (\ref{SOD:star})
to $U_k \times \{(0, 0)\} \subset U_k^{\star}$
(see~\cite[Proposition~5.1, Theorem~6.4]{MR2801403}). 

Finally we prove Theorem~\ref{thm:K3}
for $k=0$. 
By setting 
$\beta=i_{c\ast}[H]$ we define
\begin{align*}
M_0^{\star} \cneq 
P_n(X, \beta)\times J_C, \ 
U_0^{\star} \cneq U_n(X, \beta) \times J_C.
\end{align*}
Then we have the diagram 
\begin{align}\label{dia:Mstar2}
\xymatrix{
M_0^{\star} \ar[rd] _-{\pi_0^{\star+}}& & M_{1}^{\star} \ar[ld]^-{\pi_0^{\star-}} \\
& U_0^{\star} &
}
\end{align}
by taking the product of the diagram (\ref{dia:PT})
with $J_C$ via the isomorphism (\ref{isom:k=0:2}). 
The diagram (\ref{dia:Mstar2}) satisfies 
Assumption~\ref{assum:cond}
as in the proof of Theorem~\ref{thm:PTpair}.
On the other hand,  
similarly to Lemma~\ref{lem:isom:star} and Lemma~\ref{lem:isom:star2}, 
we have 
isomorphisms
\begin{align}\notag
&\pP_0 \times (C \times J_C) \stackrel{\cong}{\to}
M_0^{\star}, \ 
((\oO_S \stackrel{s}{\to} F), c, L)
\to ((\oO_X \stackrel{s}{\to}i_{c\ast}F), L), \\
\notag
&U_0 \times (C \times J_C) \stackrel{\cong}{\to}
U_0^{\star}, \ (F, c, L) \mapsto (i_{c\ast}F, L). 
\end{align}
Under the above isomorphisms, 
the diagram (\ref{dia:Mstar2}) is identified
with the diagram $(\ref{dia:P+}) \times \mathrm{id}_{C \times J_C}$
for $k=0$.  
Therefore the argument for $k>0$ also implies
Theorem~\ref{thm:K3} for $k=0$. 
\end{proof}

\bibliographystyle{amsalpha}
\bibliography{math}

Kavli Institute for the Physics and 
Mathematics of the Universe, University of Tokyo (WPI),
5-1-5 Kashiwanoha, Kashiwa, 277-8583, Japan.

\textit{E-mail address}: yukinobu.toda@ipmu.jp

\end{document}